\documentclass[12pt]{amsart}
\usepackage{amssymb,amsmath}
\oddsidemargin=-.0cm \evensidemargin=-.0cm \textwidth=16cm
\textheight=22cm \topmargin=0cm

\usepackage{showkeys}

\def\({\left(}
\def\){\right)}
\def\Nx{\nabla_x}
\def\Cal{\mathcal}
\def\Om{\Omega}
\def\eb{\varepsilon}
\def\dist{{\rm dist}}

\def\R {\mathbb{R}}

\newcommand{\be}{\begin{equation} }
\newcommand{\ee}{\end{equation} }
\def\dd{\,d}

\def \and{\qquad\text{and}\qquad}

\def\Bbb{\mathbb}
\def\Dt{\partial_t}
\def\Dx{\Delta_x}
\def\R{\Bbb R}
\def\meas{\operatorname{mes}}

\newtheorem{proposition}{Proposition}[section]
\newtheorem{theorem}[proposition]{Theorem}
\newtheorem{corollary}[proposition]{Corollary}
\newtheorem{lemma}[proposition]{Lemma}
\theoremstyle{definition}
\newtheorem{definition}[proposition]{Definition}
\newtheorem{remark}[proposition]{Remark}

\numberwithin{equation}{section}
\catcode`\ç\active\def ç{{\c c}} \catcode`\Ç\active\def Ç{{\c C}}
\catcode`\ð\active\def ð{{\u g}} \catcode`\Ð\active\def Ð{{\u G}}
\catcode`\ý\active\def ý{{\i}} \catcode`\Ý\active\def Ý{{\. I}}
\catcode`\ö\active\def ö{{\" o}} \catcode`\Ö\active\def Ö{{\" O}}
\catcode`\þ\active\def þ{{\c s}} \catcode`\Þ\active\def Þ{{\c S}}
\catcode`\ü\active\def ü{{\" u}} \catcode`\Ü\active\def Ü{{\" U}}


\def \no#1#2#3 {{\bf #1} (#3), #2.}
\def \eds#1#2#3 {#1, #2, #3.}

\title[Cahn-Hilliard equations in cylindrical domains]
{Infinite energy solutions for the Cahn-Hilliard equation in cylindrical domains}

\author[]
{A. Eden, V. K. Kalantarov and S. V. Zelik}

\thanks{ The work is  supported by TUBITAK, ISBAB project No:107T896.
The Authors would like to thank Giulio  Schimperna for fruitful
discussions.}
\address{Department of mathematics, Boðaziçi University,
 \newline\indent
Bebek, Istanbul, Turkey} \email{eden@boun.edu.tr}

\address{Department of mathematics, Ko{\c c} University,
\newline\indent Rumelifeneri Yolu, Sariyer 34450\newline\indent
Sariyer, Istanbul, Turkey} \email{vkalantarov@ku.edu.tr}
\address{Department of mathematics,
\newline\indent
University of Surrey Guildford,
\newline\indent  GU2 7XH United Kingdom }
\email{S.Zelik@surrey.ac.uk}

\subjclass[2000]{35B41, 35L05, 74K15} \keywords{}

\begin{document}

\begin{abstract}
We give a detailed study of the infinite-energy solutions of the Cahn-Hilliard equation in the 3D cylindrical domains
in uniformly local phase space. In particular, we establish the well-posedness and dissipativity for the case of regular potentials of arbitrary polynomial growth as well as for the case of sufficiently strong singular potentials. For these cases, we prove the further regularity of solutions and the existence of a global attractor. For the cases where we have failed to prove the uniqueness (e.g., for the logarithmic potentials), we establish the existence of the trajectory attractor and study its properties.
\end{abstract}

\maketitle
\section{Introduction}
We study the classical Cahn-Hilliard equations
\begin{equation}\label{0.ch}
\Dt u=\Dx(-\Dx u-f(u)+g)
\end{equation}
considered in an {\it unbounded} cylindrical domain $\Omega=\R\times\omega$ ($\omega$ is a smooth bounded domain) of $\R^3$ endowed by the Dirichlet boundary conditions.
\par
As known, the Cahn-Hillard equation is central for the material sciences and extensive amount of papers are devoted to the mathematical analysis of this equation and various of its generalizations. In particular, in the case where $\Omega$ is bounded,
its analytic and dynamic properties are relatively well-understood including the well-posedness (even in the case of singular potentials $f$) and dissipativity, smoothness, existence of global and exponential attractors, upper and lower bounds for the dimension, etc. We mention here only some contributors, namely, \cite{CH,D,EK,EGZ,EMZ1,EGW,GSZ,GSZ1,GPS,Ka,EK1,MZ3,MZ2,MZ1,No1,No,WW} (see also the references therein).
\par
The situation in the case where the underlying domain is unbounded is essentially less clear even in the case of finite-energy solutions. Indeed, as well-known, the key feature of the Cahn-Hilliard (CH) equation in bounded domains which allows to build up a reasonable theory (especially in the case of rapidly growing or singular nonlinearities) is the possibility to obtain good estimates in the negative Sobolev space $H^{-1}(\Omega)$ and, to this end, one should use the inverse Laplacian $(-\Dx)^{-1}$.
But, unfortunately this operator is not good in unbounded domains (in particular, does not map $L^2(\R^3)$ to $L^2(\R^3)$) and this makes the most part of analytic tools earlier developed for the Cahn-Hilliard equation unapplicable to the case of unbounded domains. Thus, despite the general theory of dissipative PDEs in unbounded domains which seems highly developed now-a-days (see the surveys \cite{MZ} and \cite{Ba} and references therein), even the dissipativity of finite-energy solutions in $\Omega=\R^n$ is not known for the CH equations under the reasonable assumptions on the non-linearity $f$ (to the best of our knowledge, it is established only if $f$ is linear outside of the large ball in $\Omega$, see \cite{CM}).
\par
This problem partially disappears if we consider the case where $\Omega$ is cylindrical domain endowed by the Dirichlet boundary conditions (which is the main topic of the present paper). In that case, the inverse Laplacian is well-defined (similar to the case of bounded domains) and the theory of {\it finite-energy} solutions can be built straightforwardly combining the usual Cahn-Hilliard technique and the weighted technique  (see \cite{Ab1,Ab2,BV,MZ,EZ,EMZ,Z1}).
\par
However, as also well-known, the class of {\it finite-energy} solutions is not satisfactory in unbounded domains (e.g., it does not contain physically important solutions, like spatially-periodic patterns and requires the additional strong restrictions on the potential $f$ and external forces $g$) and should be naturally replaced by the solutions in the so-called uniformly-local Sobolev spaces which typically have {\it infinite-energy}, see e.g. the survey \cite{MZ} for further discussion.
\par
Thus, following the general strategy, it seems natural to consider the Cahn-Hilliard equation in the uniformly-local phase spaces and, in order to obtain the reasonable estimates, we need to use the weighted energy estimates. But, unfortunately, the application of that technique to the Cahn-Hilliard equation is far from being straightforward even in the case of cylindrical domains since the presence of the weight destroys the  $H^{-1}$-estimates. For that reason, the well-posedness of the Cahn-Hilliard equation in uniformly local spaces was known before only for the regular potentials with strong growth restrictions and only under the presence of the regularizing terms (the so-called microforces, see \cite{Bo}) where the $H^{-1}$-estimates are  not necessary.
\par
The aim of the present paper is to give a systematic study of the Cahn-Hilliard equations in cylindrical domains in the uniformly-local phase spaces. To this end, we adapt some technique initially invented for the Navier-Stokes equations in cylinders, see \cite{Z2} and \cite{Z3} which allow us to restore the crucial $H^{-1}$-estimates for the reasonable classes of regular and singular potentials and to verify the well-posedness, dissipativity and existence of global attractors for these
potentials. In particular, we are able to treat the regular potentials of arbitrary polynomial growth (of course, under the standard dissipativity assumptions) as well as some classes of singular potentials. For instance, we prove the existence and uniqueness for the nonlinearities like
$$
f(u)\sim\frac{u}{(1-u^2)^\gamma}-Ku
$$
with $\gamma\ge5/3$. Unfortunately, we are unable to verify the uniqueness for the most physical logarithmic potentials
$$
f(u)\sim\log\frac{1+u}{1-u}-Ku
$$
and for this reason, we  will only construct below the trajectory attractor for the associated Cahn-Hilliard equation.
\par
The paper is organized as follows. In Section \ref{s1}, we briefly recall the formalism of weighted energy estimates and prove the
dissipative estimates in the appropriate weighted and uniformly local Sobolev space. The central Section \ref{s2} is devoted to the uniqueness problem for the Cahn-Hilliard equations in uniformly local spaces. We also establish here the smoothing property and the separation of the solutions from the singular points of the potential $f$.  Then, in Section \ref{s3}, we study the attractors. We start with the cases where the uniqueness is verified and prove the existence of a "usual" uniformly-local attractor. After that, we turn to the case without uniqueness and verify the existence of the so-called trajectory attractor in the weak topology of the trajectory phase space. In Section \ref{s4}, we adopt  the general method presented in \cite{MiZe} to the Cahn-Hilliard problem and verify that any weak solution satisfies the weighted energy {\it equality}. Finally, based on that equality, we extend the so-called  energy method (see \cite{Ball,MRW}, see also \cite{CVZ1,CVZ2}) to the case of uniformly local phase spaces and trajectory attractors and deduce the compactness of the attractor in a strong topology as well as the attraction to it in that strong topology.
\par
Finally, in Section \ref{s5}, we note that, although we consider here only cylindrical domains $\Omega$, most part of our results can be straightforwardly extended to any unbounded domain which possesses the Friedrichs inequality, in particular, for a domain in space between two parallel planes ($\Omega=\R^2\times(0,1)$).

\section{A priori estimates and the existence of solutions}\label{s1}
In this section, we consider the following initial boundary value problem for the Cahn-Hilliard equation in a cylindrical
domain $(x_1,x_2,x_3)\in\Omega=\R\times\omega$ where $\omega$ is a smooth bounded domain of $\R^2$:
\begin{equation}\label{eqmain}
\begin{cases}
\Dt u=\Dx\mu,\ \ \mu:=-\Dx u+f(u)+g,\\
u\big|_{t=0}=u_0,\ \ u\big|_{\partial\Omega}=\mu\big|_{\partial\Omega}=0,
\end{cases}
\end{equation}
where $u=u(t,x)$ and $\mu=\mu(t,x)$ are the unknown order parameter and the chemical potential respectively, $g$ is a given external force and $f$ is a given nonlinearity.
\par
Since we do not impose any decay conditions on the solutions as $x\to\infty$, it is natural (following to the general theory
of dissipative PDEs in unbounded domains, see \cite{EZ,MZ,Z1,Z4} and references therein) to consider equation \eqref{eqmain} in the so-called uniformly local Sobolev spaces $W^{l,p}_b(\Omega)$. We recall that these spaces can be defined as a subspace of $W^{l,p}_{loc}(\overline\Omega)$ on which the  following norm is finite:
\begin{equation}\label{ubnorm}
\|u\|_{W^{l,p}_b}:=\sup_{s\in\R}\|u\|_{W^{l,p}(\Omega_{[s,s+1]})},\ \ \Omega_{[s,s+1]}:=[s,s+1]\times\omega,
\end{equation}
see \cite{Z4} for the details. Remind also that $L^p_{loc}(\overline{\Omega})$ stands for the Frechet space generated by the seminorms $\|u\|_{L^p(\Omega_{[s,s+1]})}$, $s\in\R$, and the spaces $W^{l,p}_{loc}(\overline{\Omega})$ are defined analogously.
 In addition, will  use the spaces $L^p_b(\R_+\times\Omega)$ of functions depending on space and time variables and the corresponding Sobolev spaces which can be defined in a similar way.
\par
We assume that the external force $g\in L^2_b(\Omega)$ and the initial data $u_0\in \Phi_b$ with
\begin{equation}\label{bphase}
\Phi_b:=\{u_0\in W^{1,2}_b(\Omega),\ F(u_0)\in L^1_b(\Omega)\},\ \ F(u):=\int_0^uf(v)\,dv.
\end{equation}
We are now able to define a solution of the CH problem \eqref{eqmain}
\begin{definition}\label{defsol} A  function $u$, is a (weak, infinite-energy) solution of  problem \eqref{eqmain} if
\begin{equation}\label{eqdef}
\begin{cases}
1.\ \ u\in L^\infty(\R_+,\Phi_b)\cap C([0,\infty),L^2_{loc}(\overline\Omega)),\\
2.\ \ f(u),\Dx u,\Nx\mu\in L^2_b(\R_+\times\Omega)
\end{cases}
\end{equation}
 and the equations \eqref{eqmain} are satisfied in the sense of distributions.
\end{definition}
Finally, we will consider both cases of regular and singular nonlinearities $f$ (the difference between these cases is not essential for this section, however, it will become crucial in the next sections). Namely, for the {\it regular} case, we assume that
\begin{equation}\label{freg}
\begin{cases}
1.\ \ f\in C^1(\R,\R),\\
2. \ \ f(u).u\ge -C,\ \forall u\in\R,\\
3.\ \ f'(u)\ge - K,\ \ \forall u\in\R,
\end{cases}
\end{equation}
for some positive numbers $C$ and $K$.
\par
For the singular case, we assume that $f$ is defined on the interval $u\in(-1,1)$ and
\begin{equation}\label{fsing}
\begin{cases}
1.\ \ f\in C^1(-1,1),\\
2. \ \ \lim_{u\to\pm1}f(u)=\pm\infty,\\
3.\ \ \lim_{u\to\pm1}f'(u)=+\infty
\end{cases}
\end{equation}
(which coincides with the conditions of \cite{MZ1} for the case of bounded domains). Of course, in that case, we should additionally assume that
\begin{equation}\label{less1}
-1<u(t,x)<1,\ \text{for almost all $(t,x)$}
\end{equation}
in order to make sense of the terms $f(u)$ and $F(u)$. In the sequel, we will also need the function
\begin{equation}\label{f0}
f_0(u)=f(u)+Ku
\end{equation}
which has the same behavior as $u\to\pm\infty$ in the regular case (and as $u\to\pm1$ in the singular case),
but is monotone increasing.
\par
The main result of this section is the following theorem.
\begin{theorem}\label{Th1.main} Let the nonlinearity $f$ satisfy assumption \eqref{freg} or \eqref{fsing} and the external force $g\in L^2_b(\Omega)$. Then, for any $u_0\in\Phi_b$, there exists at least one solution $u(t)$ of problem \eqref{eqmain} (in the sense of Definition \ref{defsol}) which satisfies the following dissipative estimate:
\begin{multline}\label{1.dis}
\|u(T)\|_{W^{1,2}_b(\Omega)}^2+\|F(u(T))\|_{L^1_b(\Omega)}+\|\Dx u\|_{L^2_b([T,T+1]\times\Omega)}^2+\\+\|f(u)\|^2_{L^2_b([T,T+1]\times\Omega)}+\|\Nx\mu\|^2_{L^2_b([T,T+1]\times\Omega)}\le C\|u_0\|_{\Phi_b}^2e^{-\alpha T}+C(\|g\|^2_{L^2_b(\Omega)}+1),
\end{multline}
where the positive constants $C$ and $\alpha$ are independent of $T$ and $u_0$.
\end{theorem}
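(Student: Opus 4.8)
The plan is to establish the dissipative estimate \eqref{1.dis} first for sufficiently regular solutions and then to produce a genuine solution by approximation, passing to the limit with the help of the uniform bounds. For the approximation scheme one truncates the cylinder to the bounded cylinders $\Om_N:=[-N,N]\times\omega$ with homogeneous Dirichlet conditions for $u$ and $\mu$ on the whole boundary and, in the singular case \eqref{fsing}, simultaneously regularizes $f$ by the globally Lipschitz functions $f_N$ (with $f_N'\ge -K$) used in the bounded-domain theory, cf.\ \cite{MZ1}; the well-posedness on bounded domains then yields smooth approximate solutions $u_N$ which stay in $(-1,1)$ in the singular case and to which the a priori estimates below apply with constants independent of $N$.

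All a priori estimates come from two classical Cahn--Hilliard multipliers, now localized by a weight. Fix $x_0\in\R$ and let $\varphi=\varphi_{\eb,x_0}(x_1)$ be a smooth positive weight with $\varphi(x_0)\sim1$, $|\varphi'|+|\varphi''|\le\eb\varphi$ and $\varphi(x_1)\le Ce^{-\eb|x_1-x_0|}$, with $\eb>0$ small to be fixed. Testing $\Dt u=\Dx\mu$ by $\varphi\mu$ and using $F'=f$ and that $g$ is independent of $t$ gives the weighted energy relation
\be
\ddt\Big(\tfrac12\int\varphi|\Nx u|^2+\int\varphi F(u)+\int\varphi gu\Big)+\tfrac12\int\varphi|\Nx\mu|^2\le C\eb\,\mathcal R(t),
\ee
where the remainder $\mathcal R(t)$ produced by the commutator $[\Dx,\varphi]$ is a combination of $\int\varphi|\Nx\mu|^2$, $\int\varphi|\Nx u|^2$, $\int\varphi|\Dx u|^2$ and lower-order quantities, each carrying a factor $\eb$ because $|\nabla\varphi|\le\eb\varphi$, so it is absorbable once the $H^2$-regularity is in hand. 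Dissipativity comes from the $H^{-1}$-estimate: in a cylinder with Dirichlet conditions $(-\Dx)^{-1}$ is bounded on $L^2(\Om)$ by the Friedrichs inequality in $\omega$, so testing $\Dt u=\Dx\mu$ by $(-\Dx)^{-1}u$ gives, in the unweighted case,
\be
\tfrac12\ddt\|u\|^2_{H^{-1}(\Om)}+\int|\Nx u|^2+(f(u),u)+(g,u)=0,
\ee
which, combined with $f(u)u\ge -C$ and $\|\Nx u\|^2\ge\lambda\|u\|^2\ge c\|u\|^2_{H^{-1}}$, already yields exponential dissipativity of finite-energy solutions.

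The main obstacle is to run this last computation \emph{with the weight} $\varphi$. Testing by $(-\Dx)^{-1}(\varphi u)$ (or by $\varphi(-\Dx)^{-1}u$) produces commutator terms in which the \emph{non-local} operator $(-\Dx)^{-1}$ is paired with the weight, and these are \emph{not} lower order in the naive sense because $(-\Dx)^{-1}$ does not preserve the spatial localization. To handle them we import the device developed for the Navier--Stokes equations in cylinders in \cite{Z2,Z3}: using the product structure $\Om=\R\times\omega$ one decomposes $(-\Dx)^{-1}$ into the cross-sectional resolvent of $-\Delta_\omega$ and one-dimensional convolution operators in $x_1$ whose kernels decay exponentially at a rate controlled by $\lambda_1(\omega)$, estimates the latter between the weighted spaces $L^2_\varphi$ using $|\nabla\varphi|\le\eb\varphi$, and thereby shows that the offending terms are again $O(\eb)$ times quantities already present on the left-hand side, hence absorbable for $\eb$ small. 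Taking a suitable linear combination of the two weighted relations, using the weighted elliptic bound $\int\varphi(|\Dx u|^2+|f(u)|^2)\le C\int\varphi(|\Nx\mu|^2+|g|^2+|\Nx u|^2)$ (obtained by testing $\mu=-\Dx u+f(u)+g$ by $\varphi f(u)$ and then by $\varphi\Dx u$, using $f'(u)\ge -K$ and weighted Friedrichs for $\mu$), and applying Gronwall's inequality, one arrives at the weighted version of \eqref{1.dis} with $C,\alpha$ independent of $x_0$ and $T$; taking $\sup_{x_0\in\R}$ produces \eqref{1.dis}.

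Finally one passes to the limit $N\to\infty$. The uniform bounds give, on every fixed bounded portion of $\Om$, weak-$*$ compactness of $u_N$ in $L^\infty(\R_+;W^{1,2})\cap L^2_{loc}(\R_+;H^2)$ and, from $\Dt u_N=\Dx\mu_N$ with $\Nx\mu_N$ bounded in $L^2_b$, boundedness of $\Dt u_N$ in $L^2_{loc}(\R_+;H^{-1})$; by Aubin--Lions $u_N\to u$ strongly in $L^2_{loc}(\R_+\times\overline\Om)$ and a.e., which, together with the uniform $L^2_b$-bound on $f(u_N)$ (and, in the singular case, with $|u_N|<1$, forcing $|u|\le1$ and, after the separation estimate of the next section, $|u|<1$ a.e.), identifies the nonlinear term and shows that $u$ satisfies \eqref{eqmain} in the sense of distributions and lies in the class \eqref{eqdef}; the continuity $u\in C([0,\infty),L^2_{loc})$ follows from $u\in L^\infty(\R_+;\Phi_b)$ together with $\Dt u\in L^2_{loc}(\R_+;H^{-1}_{loc})$, and lower semicontinuity of the weighted norms transfers \eqref{1.dis} from $u_N$ to $u$. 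I expect the weighted $H^{-1}$-estimate of the third paragraph to be by far the most delicate ingredient; the rest is a careful but essentially standard adaptation of the bounded-domain Cahn--Hilliard machinery to the weighted and uniformly local setting.
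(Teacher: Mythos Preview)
Your overall architecture is reasonable, but you have misidentified where the dissipativity comes from, and as a result your proposal is both more complicated and more fragile than what the paper actually does.

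In the paper the weighted $H^{-1}$ estimate plays \emph{no role} in the proof of this theorem. The only multiplier used is $\varphi\mu$; the resulting differential inequality \eqref{1.est} already contains the dissipative term $\|\Nx\mu\|^2_{L^2_\varphi}$, and the point is that this quantity alone controls the weighted energy. This is the content of Lemma~\ref{Lem1.ellreg}: from the elliptic problem $\Dx u-f(u)=g-\mu$ one gets
\[
\|u\|^2_{W^{2,2}_\varphi}+\|f(u)\|^2_{L^2_\varphi}+(\varphi|f(u)|,|u|)\le C\|\Nx\mu\|^2_{L^2_\varphi}+C(\|g\|^2_{L^2_\varphi}+1),
\]
and since $F(u)\le|f(u)||u|+C|u|^2$, the full energy $\tfrac12\|\Nx u\|^2_{L^2_\varphi}+(F(u),\varphi)$ is bounded by the right-hand side. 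One can therefore insert a multiple of the energy on the left of the differential inequality, obtaining \eqref{1.ddis}, and Gronwall closes immediately. The $H^{-1}$ multiplier is reserved for Theorem~\ref{Th1.hm1}, where the initial data lie only in $W^{-1,2}_b$.

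Your third paragraph, by contrast, treats the weighted $H^{-1}$ estimate as the crux of the argument and proposes to control the commutators $[(-\Dx)^{-1},\varphi]$ via the Navier--Stokes device of \cite{Z2,Z3}. Two comments on this. First, it is unnecessary here, as just explained. Second, your diagnosis of the difficulty is off: when one actually multiplies by $\Nx(\varphi\Nx v)$ with $v=(-\Dx)^{-1}u$ (see \eqref{1.minus} in the paper), the problematic term is not a commutator with the nonlocal operator but rather $(f(u),\varphi'\partial_{x_1}v)$, i.e.\ a direct pairing of the nonlinearity with a weight-derivative; it is handled in Theorem~\ref{Th1.hm1} by the growth assumption \eqref{1.pol} in the regular case and by the a priori bound $\|u\|_{L^\infty}\le1$ in the singular case, not by kernel decay of $(-\Dx)^{-1}$. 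Your description would not obviously cover this term.

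Finally, the paper's approximation scheme is simpler than yours: rather than truncating the domain and regularizing $f$, it approximates $u_0$ and $g$ by smooth compactly supported data, solves the resulting finite-energy problem on the full cylinder by the standard unweighted theory, and passes to the limit using the uniform weighted bounds. Your bounded-cylinder approach would also work, but invoking ``the separation estimate of the next section'' to guarantee $|u|<1$ in the singular limit is circular at this stage; the correct mechanism is that the uniform bound on $\|f(u_N)\|_{L^2_b}$ (respectively on $\|F(u_N)\|_{L^1_b}$) survives the limit and forces $|u|<1$ almost everywhere.
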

\begin{proof} We first give the formal derivation of the dissipative estimate \eqref{1.dis}
 (some explanations on how to justify it and to verify the existence of a solution will be given afterwards). To this end, we multiply equation \eqref{eqmain} by $\phi(x_1) \mu$, where the weight function
$$
\phi(x_1)=\phi_{\eb,s}(x_1):=e^{-\eb\sqrt{|x_1-s|^2+1}},
$$
$s\in\R$ is arbitrary, $x:=(x_1,x_2,x_3)\in\Omega$, $\eb>0$ is a sufficiently small parameter which will be specified later. Then, after the straightforward transformations, we arrive at
\begin{multline}\label{1.est}
\frac d{dt}\(1/2\|\Nx u(t)\|_{L^2_\phi}^2+(F(u(t)),\phi)+(u\phi,g)\)+\|\Nx\mu(t)\|^2_{L^2_\phi}+\\+(\Dt u(t),\phi'\partial_{x_1}u(t))+(\partial_{x_1}\mu (t),\phi'\mu(t))=0,
\end{multline}
where we denote by
$$
\|v\|_{L^p_\phi}^p:=\int_\Omega|v(x)|^p\phi(x)\,dx
$$
the usual norm in the weighted Lebesgue space $L^p_\phi$.
\par
Using  the Poincare inequality together with the obvious inequality
\begin{equation}\label{1.phi}
|\phi'(x_1)|+|\phi''(x_1)|\le C\eb\phi(x_1),
\end{equation}
(where the constant $C$ is independent of $\eb$ and $s$),
the last term in the left-hand side of \eqref{1.est} can be estimated as follows:
\begin{equation}\label{1.est1}
2|(\partial_{x_1}\mu,\phi'\mu)|=|(\phi'',\mu^2)|\le C\eb\|\mu\|_{L^2_\phi}^2\le C'\eb\|\Nx\mu\|^2_{L^2_\phi}.
\end{equation}
 Furthermore, from equation \eqref{eqmain}, we conclude that
\begin{equation}\label{1.est2}
|(\Dt u,\phi'\partial_{x_1} u)|=(\Dx\mu,\phi'\partial_{x_1}u)\le C\eb\(\|\Nx\mu\|^2_{L^2_\phi}+\|u\|^2_{H^2_\phi}\).
\end{equation}
In order to close the estimate, we only need to use the weighted maximal regularity estimate for the elliptic equation
\begin{equation}\label{1.ell}
\Dx u-f(u)=h,\ \ u\big|_{\partial\Omega}=0
\end{equation}
\begin{lemma}\label{Lem1.ellreg} Let $u\in W^{1,2}_b(\Omega)$ be a solution of equation \eqref{1.ell} with $h\in L^2_b(\Omega)$ and the nonlinearity $f$ satisfy assumptions \eqref{freg} or \eqref{fsing}. Then,
$u\in W^{2,2}_b(\Omega)$, and the following estimate holds:
\begin{equation}\label{1.maxreg}
\|u\|_{W^{2,2}_\phi}^2+\|f(u)\|_{L^2_\phi}^2+(\phi|f(u)|,|u|)\le C\|h\|^2_{L^2_\phi}+C_\eb,
\end{equation}
where $\phi=\phi_{\eb,s}(x_1)$, the constant $C$ is independent of $\eb>0$ being small enough and the constant $C_\eb$ depends on $\eb$ (no matter singular or regular the nonlinearity $f$ is).
\end{lemma}
\begin{proof} Although the regularity estimate \eqref{1.maxreg} for the semilinear equation \eqref{1.ell} is well-known
(see, e.g., \cite{EZ}), for the convenience of the reader, we give its derivation below. To this end, we first multiply equation \eqref{1.ell} by $\phi u$. Then, using that in both cases the inequality
$f(u).u\ge -C$ holds, after the standard transformations involving \eqref{1.phi} and the Poincare inequality, we arrive at
\begin{equation}\label{1.h1}
\|\Nx u\|_{L^2_\phi}^2+(\phi |f(u)|,|u|)\le C\|h\|_{L^2_\phi}^2+C_\eb.
\end{equation}
After that, we may multiply equation \eqref{1.ell} by $\Nx(\phi\Nx u)$ and use that $f'(u)\ge-K$ which gives again after the standard transformations that
\begin{equation}\label{1.h2}
\|\Dx u\|_{L^2_\phi}^2\le (C+K)\|u\|_{W^{1,2}_\phi}^2+\|h\|^2_{L^2_\phi}.
\end{equation}
 This estimate together with \eqref{1.h1} and the weighted $L^2\to H^2$-regularity estimate for the Laplacian  give the desired estimate \eqref{1.maxreg} and finish the proof of the lemma.
\end{proof}
Applying now Lemma \ref{Lem1.ellreg} to the equation
\begin{equation}\label{1.muell}
\Dx u-f(u)=g-\mu
\end{equation}
and using now estimates \eqref{1.est1}, \eqref{1.est2} and \eqref{1.maxreg} and fixing $\eb$ being small enough, we deduce from \eqref{1.est} that
\begin{equation}\label{1.nondis}
\frac d{dt}\(\frac12\|\Nx u(t)\|_{L^2_\phi}^2+(F(u(t)),\phi)+(\phi u(t),g)\)+\|\Nx\mu(t)\|^2_{L^2_\phi}\le C(1+\|g\|^2_{L^2_\phi}),
\end{equation}
where $\eb$ is now fixed and the constant $C$ is independent of the parameter $s$ in the definition of the weight $\phi$.
\par
Finally, using Lemma \ref{Lem1.ellreg} together with the fact that $F(u)\le |f(u)|\cdot|u|+C|u|^2$, we conclude that
$$
\|\Nx u(t)\|_{L^2_\phi}^2+(F(u(t)),\phi)\le C\(\|\Nx\mu(t)\|^2+\|g\|^2_{L^2_\phi}+1\)
$$
and, therefore, inequality \eqref{1.nondis} can be rewritten in the form
\begin{multline}\label{1.ddis}
\frac d{dt}\(1/2\|\Nx u(t)\|_{L^2_\phi}^2+(F(u(t)),\phi)+(u\phi,g)\)+\\+\alpha\(1/2\|u(t)\|_{L^2_\phi}^2+(F(u(t)),\phi)+(u\phi,g)\)+\\+
\kappa\(\|\Nx\mu(t)\|^2_{L^2_\phi}+\|u(t)\|^2_{W^{2,2}_\phi}+\|f(u(t))\|^2_{L^2_\phi}\)\le C(1+\|g\|^2_{L^2_\phi}),
\end{multline}
for some positive constants $\alpha$ and $C$ which are independent of $u$ and $t$. Applying now the Gronwall inequality
to \eqref{1.dis}, we obtain the weighted analogue of the desired dissipative estimate \eqref{1.dis}:
\begin{multline}\label{1.w-dis}
\|u(T)\|_{W^{1,2}_\phi(\Omega)}^2+\|F(u(T))\|_{L^1_\phi(\Omega)}+\|\Dx u\|_{L^2_\phi([T,T+1]\times\Omega)}^2+\|f(u)\|^2_{L^2_\phi([T,T+1]\times\Omega)}+\\+\|\Nx\mu\|^2_{L^2_\phi([T,T+1]\times\Omega)}\le C\(\|u_0\|_{W^{1,2}_\phi}^2+\|F(u_0)\|_{L^1_{\phi}}\)e^{-\alpha T}+C(\|g\|^2_{L^2_\phi(\Omega)}+1)
\end{multline}
for some positive constants $\alpha$ and $C$. In addition, these constants are independent of the parameter $s$ in the weight function $\phi=\phi_{\eb,s}$. Therefore, taking the supremum on $s\in\R$ from the both sides of inequality \eqref{1.w-dis} and using that
$$
\|v\|_{L^p_b}\sim \sup_{s\in\R}\|v\|_{L^p_{\phi_{\eb,s}}},
$$
see \cite{Z4}, we obtain the desired dissipative estimate \eqref{1.dis} in the uniformly local spaces.
\par
Thus, it only remains to verify the existence of a solution. In a fact, it can be done in many standard ways. In particular, one of the simplest ways is to approximate the initial data $u_0$ and the external force $g$ by a sequence
$u_0^n$ and $g^n$ of smooth functions with finite support. Then, for the approximate Cahn-Hilliard problems \eqref{eqmain} with that data, the usual unweighted theory is applicable and the existence and uniqueness of a solution $u_n(t)$ can be verified exactly as in the case of bounded domains (without loss of generality, we may assume that $f(0)=0$). Thus, arguing as before, we obtain the dissipative estimate \eqref{1.dis} for the approximate solutions $u_n(t)$ with constants $\alpha$ and $C$ independent of $n$. Passing after that to the limit $n\to\infty$, we end up with the desired solution and the desired dissipative estimate \eqref{1.dis} in the uniformly local spaces. Since all of that arguments are standard and straightforward, we leave these details to the reader.
\end{proof}
We now formulate one more regularity result which will be useful for verifying the uniqueness of solution in the case of singular potentials.
\begin{corollary}\label{Cor1.reg26} Let the assumptions of Theorem \ref{Th1.main} hold, $g\in L^6_b(\Omega)$, and let $u$ be a solution of problem \eqref{eqmain} constructed in that theorem. Then, the following estimate holds:
\begin{equation}\label{1.reg26}
\sup_{s\in\R}\|f(u)\|_{L^2([t,t+1],L^6(\Omega_{[s,s+1]}))}\le C\|u_0\|_{\Phi_b}e^{-\alpha t}+C(1+\|g\|_{L^2_b}),
\end{equation}
where the positive constants $C$ and $\alpha$ are independent of $t$ and $u$.
\end{corollary}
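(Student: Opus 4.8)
The plan is to view $u(t)$, for a.e.\ fixed $t$, as a solution of the elliptic equation $\Dx u-f(u)=g-\mu(t)$ (which is nothing but the definition of $\mu$) and to combine the dissipative estimate \eqref{1.dis} with an $L^6$-version of the multiplication trick behind Lemma \ref{Lem1.ellreg}. The point is that the data $g-\mu(t)$ of this elliptic problem is controlled in $L^6_b$ in space and $L^2$ in time: $g\in L^6_b$ by assumption, while $\mu=-\Dx u+f(u)+g$ vanishes on $\partial\Om$ and $\Nx\mu\in L^2_b(\R_+\times\Om)$, so the Friedrichs inequality --- which survives in the weighted norm because $\phi=\phi_{\eb,s}$ depends only on $x_1$, giving $\|\mu(t)\|_{L^2_\phi}\le C\|\Nx\mu(t)\|_{L^2_\phi}$ --- yields $\mu(t)\in W^{1,2}_b\hookrightarrow L^6_b$ (3D Sobolev embedding on each cell $\Om_{[s,s+1]}$, uniformly in $s$), with $\int_t^{t+1}\|\mu(\tau)\|^2_{L^6_b}\dd\tau$ majorized by the right-hand side of \eqref{1.dis}. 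Likewise, \eqref{1.dis} together with the weighted $L^2\to H^2$-regularity of the Dirichlet Laplacian gives $u(\tau)\in W^{2,2}_b$ with $\int_t^{t+1}\|u(\tau)\|^2_{W^{2,2}_b}\dd\tau$ controlled, hence $\Nx u(\tau)\in L^6_b$ via $W^{2,2}(\Om_{[s,s+1]})\hookrightarrow W^{1,6}(\Om_{[s,s+1]})$ and $u(\tau)\in L^6_b$ via $W^{1,2}\hookrightarrow L^6$.

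The core step is the $L^6$-analogue of \eqref{1.maxreg}. Writing $f(u)=f_0(u)-Ku$ with $f_0$ monotone and (as in the proof of Theorem \ref{Th1.main}) $f_0(0)=0$, the elliptic identity reads $\Dx u-f_0(u)=g-\mu-Ku$, and I would multiply it by $|f_0(u)|^4f_0(u)\phi$ and integrate over $\Om$. The boundary term vanishes ($u|_{\partial\Om}=0$); the Laplacian contributes $-5\int_\Om f_0'(u)|f_0(u)|^4|\Nx u|^2\phi\le 0$ together with the commutator $-\int_\Om|f_0(u)|^4f_0(u)\,\Nx u\cdot\Nx\phi$, which by \eqref{1.phi} and H\"older (exponents $6$ and $6/5$) is bounded by $C\eb\|f_0(u)\|^5_{L^6_\phi}\|\Nx u\|_{L^6_\phi}$; and the nonlinearity contributes exactly $\|f_0(u)\|^6_{L^6_\phi}$. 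Treating the right-hand term $(g-\mu-Ku,|f_0(u)|^4f_0(u)\phi)$ by H\"older in the same way and dividing by $\|f_0(u)\|^5_{L^6_\phi}$, one obtains, with $\eb$ fixed exactly as in the proof of Theorem \ref{Th1.main} and $C$ independent of $s$,
\be
\|f_0(u(t))\|_{L^6_\phi}\le C\(\|g\|_{L^6_\phi}+\|\mu(t)\|_{L^6_\phi}+\|u(t)\|_{L^6_\phi}+\|\Nx u(t)\|_{L^6_\phi}\).
\ee
Taking $\sup_{s\in\R}$, using $\|v\|_{L^6_b}\sim\sup_s\|v\|_{L^6_{\phi_{\eb,s}}}$, squaring and integrating over $\tau\in[t,t+1]$, every term on the right is controlled by the bounds assembled in the first paragraph and hence by \eqref{1.dis}. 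Since $u\in L^\infty(\R_+,W^{1,2}_b)\hookrightarrow L^\infty(\R_+,L^6_b)$, the extra term $Ku$ is harmless and the same estimate holds for $f(u)=f_0(u)-Ku$, which is the claimed \eqref{1.reg26}.

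I expect the only real difficulty to be the customary one of making the above rigorous: the test function $|f_0(u)|^4f_0(u)\phi$ is not admissible a priori (it is strongly non-integrable near $u=\pm1$ in the singular case and only \emph{formal} for fast-growing regular $f$), so the computation must be carried out on the smooth, finitely-supported approximations $u_n$ used at the end of the proof of Theorem \ref{Th1.main}, with the $n$-uniform bound passing to the limit. The one genuinely delicate point is to check that both the $\Nx\phi$-commutator and the $(g-\mu-Ku)$-term can be absorbed using only the single norm $\|f_0(u)\|_{L^6_\phi}$ rather than a higher power of it; this is precisely what the splitting $\tfrac56+\tfrac16=1$ provides, and it is what keeps the estimate linear in the data. (Strictly, the argument uses $g\in L^6_b$, so the right-hand side of \eqref{1.reg26} should carry $\|g\|_{L^6_b}$ in place of $\|g\|_{L^2_b}$.)
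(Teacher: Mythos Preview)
Your proposal is correct and follows essentially the same route as the paper: multiply the elliptic identity $\Dx u-f(u)=g-\mu$ by $\phi f_0(u)^5$ (the paper phrases this as ``without loss of generality $f'(u)\ge0$'' and multiplies by $\phi f(u)^5$), drop the good sign-definite term, absorb the $\phi'$-commutator and the right-hand side via H\"older with exponents $6$ and $6/5$, and then integrate in time and take the supremum over $s$ using the $H^1\hookrightarrow L^6$ embedding together with \eqref{1.dis}. Your observation that the right-hand side of \eqref{1.reg26} should carry $\|g\|_{L^6_b}$ rather than $\|g\|_{L^2_b}$ is also correct.
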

\begin{proof} Indeed, without loss of generality, we may assume that $f'(u)\ge0$. Then, multiplying equation \eqref{1.muell} by $\phi f(u)^5$, we arrive at
\begin{multline}\label{1.l6}
5(\phi f'(u)f(u)^4\Nx u,\Nx u)+\|f(u)\|_{L^6_\phi}^6\le (g-\mu,\phi f(u)^5)-\\-(\phi'f(u)^5,\partial_{x_1}u)\le 1/2\| f(u)\|_{L^6_\phi}^6+C\(\|\Nx u\|_{L^6_\phi}^6+\|g\|_{L^6_\phi}^6+\|\mu\|_{L^6_\phi}^6\).
\end{multline}
Therefore,
$$
\|f(u)\|_{L^6_\phi}^2\le 2C\(\|g\|_{L^6_\phi}^2+\|\mu\|_{L^6_\phi}^2+\|\Nx u\|^2_{L^6_\varphi}\).
$$
Integrating this formula with respect to $[t,t+1]$, taking the supremum over $s\in\R$ and using that $W^{1,2}\subset L^6$
and Lemma \ref{Lem1.ellreg},
we finally have
$$
\sup_{s\in\R}\|f(u)\|_{L^2([t,t+1],L^6(\Omega_{[s,s+1]}))}\le C\(\|g\|_{L^6_b}+\|\Nx\mu\|_{L^2_b([t,t+1]\times\Omega)}\)
$$
which together with the dissipative estimate \eqref{1.dis} give \eqref{1.reg26} and finish the proof of the lemma.
\end{proof}
We recall that, up to the moment, we consider only the solutions of equation \eqref{eqmain} with sufficiently regular initial data $u_0\in\Phi_b$. However, it is well-known that, at least in the case of bounded domains, the Cahn-Hilliard equation is well-posed for less regular initial data $u_0\in H^{-1}(\Omega)$ and that the $H^{-1}$-estimates of solutions are crucial for the theory of that equation, see \cite{Ka,MZ1,Te}. In the case of unbounded domains and uniformly local phase spaces, the situation becomes more complicated and more delicate although, as we will see below, it is still possible to verify the existence of solutions for $u_0\in W^{-1,2}_b(\Omega)$ for the general case of regular and singular potentials.
\begin{theorem}\label{Th1.hm1} Let the nonlinearity $f$ satisfy conditions \eqref{freg} (regular case) together with the polynomial growth restriction
\begin{equation}\label{1.pol}
f(u).u\ge -C+C_1|u|^{p+1},\ \ |f(u)|\le C_2(1+|u|^p)
\end{equation}
for some positive constants $C$, $C_1$, $C_2$ and some exponent $p>0$,
  or \eqref{fsing} (singular case) and $u_0\in W^{-1,2}_b(\Omega)$ (in the case of singular potentials, we should assume, in addition, that $|u_0(x)|<1$ almost everywhere). Then, there exists at least one solution $u(t)$ of problem \eqref{eqmain} whichbecomes more regular ($u(t)\in\Phi_b$) for all $t>0$ and satisfy all assumptions of Definition \ref{defsol} for $t>0$. Moreover,
\begin{multline}\label{1.west}
\|u(t)\|_{W^{-1,2}_b(\Omega)}^2+\|\Nx u\|_{L^2_b([t,t+1]\times\Omega)}^2+\\+\|f(u)u\|_{L^1_b([t,t+1]\times\Omega)}\le
C\|u_0\|_{W^{-1,2}_b(\Omega)}e^{-\alpha t}+C(1+\|g\|_{L^2_b}^2),
\end{multline}
where the positive constants $C$ and $\alpha$  are independent of $u$ and $t$. In addition, the following smoothing estimate holds:
\begin{equation}\label{1.wreg}
\|u(t)\|_{\Phi_b}\le Ct^{-1/2}\|u_0\|_{W^{-1,2}_b}+C(1+\|g\|_{L^2_b}),\ \ t\le1
 \end{equation}
for some positive constant $C$ and estimate \eqref{1.dis} holds for $t>0$ with $u_0$ replaced by $u(t)$.
\end{theorem}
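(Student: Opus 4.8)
The plan is to reduce everything to a uniform (in the approximation parameter) version of a new weighted $H^{-1}$-estimate, to be run together with the energy estimate \eqref{1.ddis} of Theorem~\ref{Th1.main}, and then to obtain the solution by a routine approximation. First I would approximate the datum: choose $u_0^n\in\Phi_b$ (smooth, compactly supported, and with $|u_0^n|<1$ in the singular case) so that $u_0^n\to u_0$ in $W^{-1,2}_{loc}(\overline\Omega)$ and $\|u_0^n\|_{W^{-1,2}_b}\le C\|u_0\|_{W^{-1,2}_b}$, and let $u_n$ be the corresponding solutions given by Theorem~\ref{Th1.main}. The whole difficulty is that, although $\|u_0^n\|_{\Phi_b}$ may diverge with $n$, the bounds below for $u_n$ will depend only on $\|u_0\|_{W^{-1,2}_b}$.

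The core step is the weighted $H^{-1}$-estimate. Multiply \eqref{eqmain} by $w:=(-\Dx)^{-1}(\phi u)$, where $\phi=\phi_{\eb,s}$ is the exponential weight used in the proof of Theorem~\ref{Th1.main} and $\eb$ is to be fixed small. Since $\mu\big|_{\partial\Omega}=w\big|_{\partial\Omega}=0$ one gets $(\Dt u,w)=-(\mu,\phi u)$, hence, after one integration by parts, the Poincar\'e inequality, and the lower bounds in \eqref{freg}/\eqref{fsing},
\[
(\Dt u,w)+\|\Nx u\|_{L^2_\phi}^2+(\phi|f(u)|,|u|)\le C(1+\|g\|_{L^2_\phi}^2)+C_\eb .
\]
The point that makes this work on the cylinder is that $-\Dx$ with Dirichlet conditions has a spectral gap, so its Green function decays exponentially in $x_1$ at a rate exceeding $\eb$; consequently $(-\Dx)^{-1}$ is bounded on $L^2_\phi$ and $H^2_\phi$, and $\psi:=(-\Dx)^{-1}u$ obeys the weighted maximal regularity $\|\psi\|_{H^2_\phi}\le C\|u\|_{L^2_\phi}$ --- this is precisely the device introduced for the Navier--Stokes equations in cylinders in \cite{Z2,Z3}. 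Symmetrizing the time derivative one writes $(\Dt u,w)$ as $\tfrac12\tfrac d{dt}\bigl(u,(-\Dx)^{-1}(\phi u)\bigr)$ plus terms of order $\eb$, where $\bigl(u,(-\Dx)^{-1}(\phi u)\bigr)=\|\Nx\psi\|_{L^2_\phi}^2+O(\eb)$, a quantity equivalent to $\|u\|_{W^{-1,2}_\phi}^2$ for $\eb$ small. Moving derivatives off $\mu$ in the $\eb$-remainder by further integrations by parts and using \eqref{1.phi}, the Dirichlet condition on $u$, the weighted elliptic bound for $\psi$ from Lemma~\ref{Lem1.ellreg}, and --- in the regular case --- the polynomial growth \eqref{1.pol} through $\|f(u)\|_{L^1_\phi}\le C(\phi|f(u)|,|u|)+C_\eb$, one absorbs most of the remainder into $\alpha\|u\|_{W^{-1,2}_\phi}^2+\kappa(\|\Nx u\|_{L^2_\phi}^2+(\phi|f(u)|,|u|))$ and is left with a residue dominated by $C\eb\|\Nx\mu\|_{L^2_\phi}^2$. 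Thus one arrives at
\[
\tfrac d{dt}\Cal E_{-1}+\alpha\Cal E_{-1}+\kappa\bigl(\|\Nx u\|_{L^2_\phi}^2+(\phi|f(u)|,|u|)\bigr)\le C(1+\|g\|_{L^2_\phi}^2)+C\eb\|\Nx\mu\|_{L^2_\phi}^2 ,
\]
with $\Cal E_{-1}\sim\|u\|_{W^{-1,2}_\phi}^2$.

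Next I would close this by coupling with \eqref{1.ddis}, in which $\|\Nx\mu\|_{L^2_\phi}^2$ sits on the dissipative side. Since the energy functional $\Cal E(t):=\tfrac12\|\Nx u(t)\|_{L^2_\phi}^2+(F(u(t)),\phi)+(u(t)\phi,g)$ need not be finite at $t=0$, I would work with $\Cal E_{-1}(t)+\theta\min\{t,1\}\,\Cal E(t)$ for a small fixed $\theta$, using $\Cal E\le C(\|\Nx u\|_{L^2_\phi}^2+(\phi|f(u)|,|u|)+1+\|g\|_{L^2_\phi}^2)$ and fixing $\eb$ small enough for the residue $C\eb\|\Nx\mu\|_{L^2_\phi}^2$ to be absorbed against the dissipation coming from \eqref{1.ddis}. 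This coupling, in particular making it work uniformly down to $t=0$ (where the available $\|\Nx\mu\|_{L^2_\phi}^2$ carries only the small prefactor $\theta t$), is the step I expect to be the main obstacle. Gronwall then yields the weighted analogue of \eqref{1.west}; multiplying \eqref{1.ddis} by $t$, integrating over $[0,t]$ with $t\le1$, dropping the non-negative dissipation, and using the $L^1_t$-control of $\Cal E$ just obtained gives the weighted analogue of \eqref{1.wreg}; and taking the supremum over $s\in\R$ as in the proof of Theorem~\ref{Th1.main} turns these into \eqref{1.west} and \eqref{1.wreg}. In particular $u(t)\in\Phi_b$ for every $t>0$, whence the remaining assertions --- the regularity of Definition~\ref{defsol} for $t>0$ and estimate \eqref{1.dis} with $u_0$ replaced by $u(t)$ --- follow by applying Theorem~\ref{Th1.main} with an arbitrary positive initial time.

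Finally, the existence follows by passing to the limit $n\to\infty$ along these uniform bounds: on every $[\delta,T]\times\Omega_{[s,s+1]}$ with $\delta>0$ the sequence $u_n$ is precompact by Aubin--Lions (using $\Dt u_n=\Dx\mu_n$ bounded in $L^2([\delta,T],H^{-2}_{loc})$ together with the $\Phi_b$-bounds for $t\ge\delta$ from Theorem~\ref{Th1.main}), so $u_n\to u$ strongly in $L^2_{loc}$ and a.e.; in the regular case the upper bound in \eqref{1.pol} makes $f(u_n)$ bounded in $L^{(p+1)/p}_{b,loc}$, so that $f(u_n)\to f(u)$ weakly, while in the singular case the uniform bound on $(F(u_n),\phi)$ forces $-1<u<1$ a.e. and provides the separation needed to pass to the limit in $f(u_n)$. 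Passing to the limit in \eqref{eqmain} and using lower semicontinuity of the norms in \eqref{1.west}--\eqref{1.wreg} completes the construction. The routine parts --- justification of the formal multiplications, the construction of $u_0^n$, the standard weighted elliptic bounds, and the limit passage --- I would carry out exactly as in the bounded-domain theory and as in the proof of Theorem~\ref{Th1.main}.
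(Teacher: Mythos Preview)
Your strategy has a genuine gap precisely where you flag it. With your test function $w=(-\Dx)^{-1}(\phi u)$ the commutator produces a residue that you estimate by $C\eb\|\Nx\mu\|_{L^2_\phi}^2$, and you then try to absorb it by coupling with the $H^1$ energy inequality \eqref{1.ddis} through the functional $\Cal E_{-1}(t)+\theta\min\{t,1\}\,\Cal E(t)$. This cannot close: for $t$ near $0$ the only copy of $\|\Nx\mu\|_{L^2_\phi}^2$ on the dissipative side carries the prefactor $\theta t$, which is smaller than the fixed number $C\eb$ for $t$ small, so the inequality is not uniform down to $t=0$. Since $\|\Nx\mu\|_{L^2_\phi}^2$ is an $H^1$-level quantity, there is no way to control it using only $H^{-1}$-level terms.

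The paper avoids the problem by choosing the test function differently, so that no $\|\Nx\mu\|^2$ residue ever appears. Set $v:=(-\Dx)^{-1}u$ first and rewrite \eqref{eqmain} as
\[
\Dt v=\Dx u-f(u)+g,
\]
then multiply by $\Nx(\phi\Nx v)$. The time-derivative term is now \emph{exactly} $\tfrac12\tfrac{d}{dt}\|\Nx v\|_{L^2_\phi}^2$, with no commutator. The $\eb$-remainder comes only from the spatial part and reads $-(\partial_{x_1}u,\phi'\partial_{x_1}v)-(f(u),\phi'\partial_{x_1}v)$, so the only nontrivial bad term is $C\eb(\phi|f(u)|,|\Nx v|)$. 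This term is controllable at the $H^{-1}$ level: in the regular case the polynomial bounds \eqref{1.pol} together with the weighted $L^{p+1}$ maximal regularity $\|\Nx v\|_{L^{p+1}_\phi}\le C\|u\|_{L^{p+1}_\phi}$ give
\[
|(f(u),\phi'\partial_{x_1}v)|\le C\|f(u)\|_{L^{(p+1)/p}_\phi}\|\Nx v\|_{L^{p+1}_\phi}\le C\bigl(\eb^{-1}+\|u\|_{L^{p+1}_\phi}^{p+1}\bigr)\le C_\eb+C\eb(\phi|f(u)|,|u|);
\]
in the singular case $|u|\le1$ forces $\|\Nx v\|_{L^\infty}\le C$, whence $|(f(u),\phi'\partial_{x_1}v)|\le C\eb\|f(u)\|_{L^1_\phi}\le C\eb(\phi|f(u)|,|u|)+C_\eb$. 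Either way, for $\eb$ small the bad term is absorbed by the good term $(\phi|f(u)|,|u|)$ already present on the left, and one obtains a \emph{self-contained} differential inequality
\[
\tfrac{d}{dt}\|\Nx v\|_{L^2_\phi}^2+\alpha\|v\|_{H^3_\phi}^2+\alpha\|u\|_{H^1_\phi}^2+(\phi|f(u)|,|u|)\le C_\eb(1+\|g\|_{L^2_\phi}^2),
\]
from which \eqref{1.west} follows by Gronwall without any coupling to \eqref{1.ddis}. The smoothing estimate \eqref{1.wreg} then follows by your own argument (multiply \eqref{1.ddis} by $t$ and integrate), using the $L^1_t$-control of $\|u\|_{H^1_\phi}^2+(\phi|f(u)|,|u|)$ just obtained.

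In fact your commutator term, computed more carefully, is $-\tfrac12(\mu,2\phi'\partial_{x_1}v+\phi''v)$; if instead of bounding it crudely via $\|\mu\|_{L^2_\phi}$ you expand $\mu=-\Dx u+f(u)+g$ and integrate by parts in the $\Dx u$ piece, you recover exactly the same type of bad term $(f(u),\phi'\partial_{x_1}v)$ as above, and your scheme would then close without the coupling. But the paper's formulation is cleaner and makes transparent why the $H^{-1}$ estimate stands on its own.
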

\begin{proof}  Let $v(t):=(-\Dx)^{-1}u(t)$, and let us rewrite problem \eqref{eqmain} in the equivalent form:
\begin{equation}\label{ch1}
\begin{cases}
\Dt v=\Dx u-f(u)+g,\ \  x\in \Om,\ t>0,\\
u(0)=u_0, \ u\big|_{\partial\Omega}=0.
\end{cases}
\end{equation}
Multiplying  this equation by $\Nx(\phi_\eb(x)\Nx v)$ and integrating over $x\in \Omega$, we get
\begin{multline}\label{1.minus}
\frac12\frac d{dt}\|\Nx v(t)\|^2_{L^2_\phi}+\|\Nx u\|^2_{L^2_\phi}+(\phi f(u),u)=\\=-(\partial_{x_1} u,\phi' \partial_{x_1} v)-(f(u),\phi'\partial_{x_1}v)+(g,\Nx(\phi\Nx v))
\end{multline}
We now use inequality \eqref{1.phi} together with the maximal regularity of the Laplacian in weighted spaces
\begin{equation}\label{1.lreg}
C_2\|\Nx u\|_{L^2_{\phi_\eb}}\le \|v\|_{H^3_{\phi_\eb}}\le C_1\|\Nx u\|_{L^2_{\phi_\eb}}.
\end{equation}
Then, formula \eqref{1.minus} reads
\begin{equation}\label{1.mest}
\frac12\frac d{dt}\|\Nx v(t)\|_{L^2_\phi}^2+\alpha\|v(t)\|_{H^3_\phi}^2+(\phi |f(u)|,|u|)\le C_\eb(1+\|g\|^2_{L^2_\phi})+C\eb(\phi |f(u)|,|\Nx v|),
\end{equation}
where the positive constants $C$ and $\alpha$ are independent of $\eb\to0$.
\par
Thus, we only need to estimate the last term in the right-hand side of \eqref{1.mest}. Let us consider the regular and singular cases separately. Let first $f$ be regular and, in addition, assumptions \eqref{1.pol} hold. Then, using the maximal $L^{p+1}$-regularity for the Laplacian in the weighted spaces together with H\"older inequality, we arrive at
\begin{equation}\label{1.lp}
|(f(u),\phi |\Nx v|)|\le C\|f(u)\|_{L^{q}_\phi}\|\Nx v\|_{L^{p+1}_\phi}\le C(\eb^{-1}+\|u\|_{L^{p+1}_\phi}^{p+1}),
\end{equation}
where $\frac1q+\frac1{p+1}=1$ and the constant $C$ is independent of $\eb$. Using again inequalities \eqref{1.pol} and fixing $\eb>0$ small enough, we finally deduce that
\begin{equation}\label{1.m1est}
\frac d{dt}\|\Nx v(t)\|_{L^2_\phi}^2+\alpha\|v(t)\|_{H^3_\phi}^2+\alpha\|u(t)\|_{H^1_\phi}^2+(\phi |f(u)|,|u|)\le C_\eb(1+\|g\|^2_{L^2_\phi}).
\end{equation}
Let us now consider the singular case. The situation here is even simpler since we a priori now that $\|u(t)\|_{L^{\infty}}\le 1$ and, therefore, according to the regularity of the Laplacian in the uniformly local spaces, we conclude that
\begin{equation}
\|\Nx v(t)\|_{L^\infty(\Omega)}\le C.
\end{equation}
By that reason, the second term in the right-hand side of \eqref{1.mest} can be estimated by
$$
|(f(u),\phi|\Nx v|)|\le C\|f(u)\|_{L^1_\phi}
$$
and, since $|f(u)|\le 2f(u).u+C_1$, we again may fix $\eb$ small enough in such a way that the last term in the right-hand side of \eqref{1.mest} will be controlled by the last term in the left-hand side. Thus, in that case, we also arrive at the inequality \eqref{1.m1est}.
\par
After obtaining the differential inequality \eqref{1.m1est}, it is not difficult to deduce the desired estimates  \eqref{1.west} and \eqref{1.wreg} and finish the proof of the theorem. To this end, we note that the Gronwall inequality applied to \eqref{1.mest} gives
\begin{multline}\label{1.phimin}
\|v(T)\|_{H^1_\phi}^2+\int_T^{T+1}\|u(t)\|_{H^1_\phi}^2+(\phi|f(u(t))|,|u(t)|)\,dt\le\\\le C\|v(0)\|^2_{H^1_\phi}e^{-\alpha T}+
C(1+\|g\|^2_{L^2_\phi}).
\end{multline}
In order to deduce estimate \eqref{1.wreg} from \eqref{1.phimin}, it is enough to remind that $\phi=\phi_{\eb,s}$ and take the supremum over $s\in\R$ from the both parts of that inequality (analogous to the derivation of \eqref{1.dis} from \eqref{1.w-dis}).
\par
Let us verify now the smoothing property \eqref{1.wreg}. To this end, we multiply inequality \eqref{1.ddis} by $t$ and integrate by $t\in[0,1]$. That gives,
\begin{equation}\label{1.smm}
t\|u(t)\|_{H^1_\phi}^2+t(F(u(t)),\phi u(t))\le C\int_0^1\|u(s)\|^2_{H^1_\phi}+\|F(u(s))\|_{L^1_\phi}\,ds+C(1+\|g\|_{L^2_\phi}^2).
\end{equation}
Using now inequality \eqref{1.phimin} for estimating the right-hand side of \eqref{1.smm} and using that $F(u)\le f(u).u+C|u|^2$, we arrive at the weighted analogue of estimate \eqref{1.wreg}. Taking finally the supremum over $s\in\R$, we derive the uniformly local estimate \eqref{1.wreg} and finish the proof of the theorem.
\end{proof}
\begin{remark}\label{Rem1.bad} The proof of the last theorem indicates the main difference of the Cahn-Hilliard equation theory in weighted spaces in comparison with the classical unweighted case, namely, the presence of the additional term
$C\eb(|f(u)|,\phi|\Nx v|)$ in the $H^{-1}$-estimate \eqref{1.mest}. This term is not sign-defined and factually destroys the global Lipschitz continuity of the solution semigroup in weighted spaces which is the main technical tool for handling the Cahn-Hilliard equation with singular of fast growing potentials. In the previous theorem, we were able to overcome this difficulty and obtain the dissipative $H^{-1}$-estimate in the weighted and uniformly local spaces in almost the same form as for the non-weighted case. However, as we will see that difficulty leads to  more restrictive assumptions on $f$ if we want to establish the uniqueness.
\end{remark}
\section{Uniqueness and regularity}\label{s2}

In this section, we pose some additional restrictions on the non-linearity $f$ which will allow us to establish the uniqueness of solutions and some crucial smoothing effects for the Cahn-Hilliard equation in the uniformly local spaces. We start with the case of regular potentials and assume
also that the following is true:
\begin{equation}\label{2.freg}
|f'(u)|\le C_1 F(u)+C_2,
\end{equation}
for all $u\in\R$ and for some fixed positive $C_1$ and $C_2$. Note that \eqref{2.freg} does not look as a big restriction for the case of regular potentials (although excludes completely the case of singular $f$ which will be separately discussed below). In particular, the polynomial non-linearities (see condition \eqref{1.pol}) always satisfy this assumption. The next theorem gives the uniqueness of a solution for the Cahn-Hilliard equation with regular potentials.

\begin{theorem}\label{Th2.unireg} Let the assumptions of Theorem \ref{Th1.main} hold and let, in addition, $f$ be regular and \eqref{2.freg} hold. Then, for every $u_0\in\Phi_b$, the solution $(u(t)$ constructed in Theorem \ref{Th1.main} is unique  and, for every two such solutions $u_1(t)$ and $u_2(t)$ (with different initial data) the following weighted Lipschitz continuity in $H^{-1}$ holds:
\begin{equation}\label{2.lip}
\|u_1(t)-u_2(t)\|_{H^{-1}_{\phi_{\eb,s}}}\le Ce^{Kt}\|u_1(0)-u_2(0)\|_{H^{-1}_{\phi_{\eb,s}}},
\end{equation}
where the constants $C$ and $K$ depend on the $\Phi_b$-norms of the initial data $u_1(0)$ and $u_2(0)$, but independent of $\eb\ll1$ and $s\in\R$.
\end{theorem}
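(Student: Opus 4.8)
The plan is to estimate the difference $w(t):=u_1(t)-u_2(t)$ in the weighted space $H^{-1}_{\phi}$ with $\phi=\phi_{\eb,s}$, following the classical Cahn-Hilliard uniqueness scheme but carefully tracking the parasitic weight-commutator terms flagged in Remark \ref{Rem1.bad}. Write $v_i:=(-\Dx)^{-1}u_i$ so that, as in \eqref{ch1}, $\Dt v_i=\Dx u_i-f(u_i)+g$; then $w=-\Dx(v_1-v_2)$ and, setting $\bar v:=v_1-v_2$, the difference equation reads $\Dt\bar v=\Dx w-(f(u_1)-f(u_2))$. First I would multiply this equation by $\Nx(\phi\Nx\bar v)$ and integrate over $\Omega$. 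The principal terms produce $\tfrac12\ddt\|\Nx\bar v\|_{L^2_\phi}^2+\|\Nx w\|_{L^2_\phi}^2+(\phi(f(u_1)-f(u_2)),w)$, and by the one-sided Lipschitz bound $f'\ge-K$ (equivalently $f_0$ monotone, cf.\ \eqref{f0}) the nonlinear term satisfies $(\phi(f(u_1)-f(u_2)),w)\ge -K\|w\|_{L^2_\phi}^2\ge -K C\|\Nx\bar v\|_{L^2_\phi}^2$ after using the weighted elliptic regularity $\|w\|_{L^2_\phi}\le C\|\Nx\bar v\|_{L^2_\phi}$ from \eqref{1.lreg}. The genuinely dangerous terms are the two weight-commutator contributions $(\partial_{x_1}w,\phi'\partial_{x_1}\bar v)$ and $(f(u_1)-f(u_2),\phi'\partial_{x_1}\bar v)$, each carrying a factor $\phi'$ which is $O(\eb\phi)$ by \eqref{1.phi}.

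The first commutator term is harmless: $|(\partial_{x_1}w,\phi'\partial_{x_1}\bar v)|\le C\eb\|\Nx w\|_{L^2_\phi}\|\Nx\bar v\|_{L^2_\phi}\le \tfrac14\|\Nx w\|_{L^2_\phi}^2+C\eb^2\|\Nx\bar v\|_{L^2_\phi}^2$, which is absorbed. The heart of the matter is the second term, and this is where hypothesis \eqref{2.freg} enters. By the mean value theorem $|f(u_1)-f(u_2)|\le |f'(\theta)|\,|w|$ with $\theta$ between $u_1$ and $u_2$, and \eqref{2.freg} gives $|f'(\theta)|\le C_1 F(\theta)+C_2\le C_1(F(u_1)+F(u_2))+C_2$ (using convexity/monotonicity of $f_0$ to bound $F$ at an intermediate point by the values at the endpoints, up to a controllable correction). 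Hence
\begin{equation}\label{2.key}
|(f(u_1)-f(u_2),\phi'\partial_{x_1}\bar v)|\le C\eb\int_\Omega\big(F(u_1)+F(u_2)+1\big)|w|\,|\Nx\bar v|\,\phi\,dx.
\end{equation}
Now I would use that $\Nx\bar v=\Nx(-\Dx)^{-1}w$ is bounded in the relevant weighted $L^p$ by $\|w\|$ in a slightly lower space — more precisely, interpolating between the $H^1_\phi$ and $L^2_\phi$ control of $w$ and the fact that $F(u_i)\in L^1_b$ uniformly in time by the dissipative estimate \eqref{1.dis}. Concretely, bound \eqref{2.key} by $C\eb\|F(u_1)+F(u_2)+1\|_{L^1_b}\cdot\|w\|_{L^\infty_\phi}\cdot\|\Nx\bar v\|_{L^\infty_\phi}$, or, to avoid $L^\infty$, split $|w|\,|\Nx\bar v|\le \delta|\Nx w|^2+C_\delta(\text{lower order in }\bar v)$ after a Gagliardo–Nirenberg step and pay the $L^1_b$ norm of the $F(u_i)$; choosing first $\delta$ small to absorb into $\|\Nx w\|^2_{L^2_\phi}$ and then $\eb$ small, everything is controlled by $C(1+M)\|\Nx\bar v\|_{L^2_\phi}^2$ where $M$ depends only on the $\Phi_b$-norms of the initial data through \eqref{1.dis}. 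Collecting terms yields the differential inequality $\ddt\|\Nx\bar v(t)\|_{L^2_\phi}^2\le 2K'\|\Nx\bar v(t)\|_{L^2_\phi}^2$ with $K'=K'(\|u_1(0)\|_{\Phi_b},\|u_2(0)\|_{\Phi_b})$, and Gronwall gives \eqref{2.lip} since $\|\Nx\bar v\|_{L^2_\phi}$ is equivalent (via \eqref{1.lreg}) to $\|w\|_{H^{-1}_\phi}$ up to constants independent of $\eb\ll1$ and $s$. Uniqueness is the special case $u_1(0)=u_2(0)$.

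The main obstacle I anticipate is making the estimate of \eqref{2.key} rigorous while keeping all constants independent of $\eb$ and $s$: one cannot simply throw $|\Nx\bar v|$ into $L^\infty$ since that would require an $H^2$-type bound on $\bar v$, i.e.\ an $H^1$-bound on $w$, which is not available at the uniqueness level for difference of two merely weak solutions. The right move is to keep $|\Nx\bar v|$ in $L^2_\phi$ (where we have it for free) and put the product $(F(u_1)+F(u_2))|w|$ into $L^2_\phi$, which forces an interpolation $\|(F(u_1)+F(u_2))|w|\|_{L^2_\phi}\lesssim \|F(u_1)+F(u_2)\|_{L^{r}_b}\|w\|_{L^{r'}_\phi}$ with a Sobolev exponent $r'$ reachable from $H^1_\phi$ in 3D; this is exactly why \eqref{2.freg} (which upgrades $f'$ to something of order $F$, an $L^1_b$—and after the elliptic regularity of Lemma \ref{Lem1.ellreg}, better—quantity) is the natural hypothesis, and why the logarithmic/strong-singular potentials fall outside the scope and must be handled by the trajectory-attractor route instead. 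Once this interpolation bookkeeping is arranged, the smallness of $\eb$ closes everything and the constants inherit $s$-independence from \eqref{1.phi}, \eqref{1.lreg} and the uniform-local equivalence of norms.
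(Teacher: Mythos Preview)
Your setup is correct and matches the paper's, but the treatment of the dangerous commutator term $(f(u_1)-f(u_2),\phi'\partial_{x_1}\bar v)$ has a genuine gap. With only $l:=\int_0^1 f'(\kappa u_1+(1-\kappa)u_2)\,d\kappa\in L^1_b$ (which is all that \eqref{2.freg} gives), the integral $\int l\,|w|\,|\nabla\bar v|\,\phi$ cannot be closed by any of the splittings you suggest: putting $(F(u_1)+F(u_2))|w|$ into $L^2_\phi$ would require $F(u_i)\in L^3_b$ (since $w\in L^6$ from $H^1$ in 3D), which fails already for the cubic; and the ``$|w|\,|\nabla\bar v|\le\delta|\nabla w|^2+\ldots$'' move is not a legitimate Young inequality. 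More importantly, your claim that an $L^\infty$ bound on $\nabla\bar v$ is ``not available'' is wrong: the dissipation term $\|\Nx w\|_{L^2_\phi}^2$ you wrote down \emph{is} an $H^1_\phi$-bound on $w$, hence an $H^3_\phi$-bound on $\bar v$, and in 3D this embeds into $W^{1,\infty}$.

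The missing idea is how to exploit the good monotone term $(\phi\,l\,w,w)$ rather than discard it via $l\ge-K$. The paper writes $l=[l+K]-K$ and applies a weighted Young inequality with the nonnegative weight $[l+K]$:
\[
|([l+K]w,\phi'\partial_{x_1}\bar v)|\le \tfrac12(\phi[l+K]w,w)+C\eb^2(\phi[l+K]\,\partial_{x_1}\bar v,\partial_{x_1}\bar v).
\]
The first piece is absorbed by the good term $(\phi[l+K]w,w)\ge0$ on the left; the remaining bad term involves only $l|\nabla\bar v|^2$ (no $|w|$), and now $l\in L^1_b$ pairs cleanly with $\|\nabla\bar v\|_{L^\infty_{\phi^{1/2}}}^2$. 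The latter is controlled by the interpolation $\|\nabla\bar v\|_{L^\infty_{\phi^{1/2}}}^2\le C\|\bar v\|_{H^1_\phi}^{1/2}\|\bar v\|_{H^3_\phi}^{3/2}$, and the $H^3_\phi$ factor is absorbed into the dissipation. This is the step that makes the estimate close; your bookkeeping around \eqref{2.key} never gets the $|w|$ out of the bad term, and without that the $L^1_b$ control of $l$ is not enough.
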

\begin{proof} Let $w(t):=u_1(t)-u_2(t)$ and let $v(t):=(-\Dx)^{-1}w(t)$. Then, these functions solve
\begin{equation}\label{2.dif}
\Dt v=\Dx w-l(t)w,
\end{equation}
where $l(t):=\frac{f(u_1(t))-f(u_2(t))}{u_1(t)-u_2(t)}=\int_0^1f'(\kappa u_1(t)+(1-\kappa)u_2(t))\,d\kappa$.
\par
Multiplying, this equation by $\Nx(\phi\Nx v)$ (note that all terms have sense since $u_1$ and $u_2$ are solutions in the sense of Definition \ref{defsol}) and using that $l(t)\ge-K$ and, therefore,
$$
([l(t)+K]w,\phi'\partial_{x_1}v)\le 1/2(\phi[l(t)+K]w,w)+C\eb^2(\phi[l(t)+K]\partial_{x_1}v,\partial_{x_1}v),
$$
we end up with the following estimate (similar to \eqref{1.minus}):
\begin{equation}\label{2.minus}
\frac {d}{dt}\|v\|^2_{H^{1}_{\phi}}+2\alpha\|v\|^2_{H^3_{\phi}}\le K(\|v\|^2_{H^1_{\phi}}+\|v\|^2_{H^2_\phi})+
C\eb^2(\phi l(t)\Nx v,\Nx v),
\end{equation}
where $C$ is independent of $\eb$ being small enough. As usual, the term with the $H^2$-norm of $v$ can be easily estimated by interpolation between
$H^3$ and $H^1$, so, we only need to estimate the last term into the right-hand side of \eqref{2.minus}. Note also that, up to the moment, we have nowhere used the additional restriction that $f$ is regular and satisfies \eqref{2.freg}.
\par
In order
to estimate the term $l(t)$, we remind that the function $F(u)+Ku^2/2$ is convex (since $f'(u)\ge-K$). Consequently, due to condition \eqref{2.freg},
$$
|f'(\kappa u_1+(1-\kappa)u_2)|\le C_1F(\kappa u_1+(1-\kappa)u_2)+C_2\le C_1(F(u_1)+F(u_2))+C_2+K(u_1^2+u_2^2).
$$
Since $F(w)\in L_b^1$ for all $w\in\Phi_b$, we conclude that
\begin{equation}\label{2.f}
\|l(t)\|_{L^1_b(\Omega)}\le C,\ \ t\in\R_+,
\end{equation}
where the constant $C$ is independent of $t$. Thus, estimate \eqref{2.minus} now reads as
\begin{equation}\label{2.mminus}
\frac {d}{dt}\|v\|^2_{H^{1}_{\phi}}+\alpha\|v\|^2_{H^3_{\phi}}\le C\|v\|^2_{H^1_{\phi}}+ C\eb\|\Nx v\|_{L^\infty_{\phi^{1/2}}}^2.
\end{equation}
To estimate the last term in \eqref{2.minus}, it only remains now to use the interpolation inequality
$$
\|\Nx v\|_{L^\infty_{\phi^{1/2}}}^2\le C\|v\|_{H^1_\phi}^{1/2}\|v\|_{H^3_\phi}^{3/2}.
$$
That gives
\begin{equation}\label{2.mmminus}
\frac {d}{dt}\|v\|^2_{H^{1}_{\phi}}+\alpha/2\|v\|^2_{H^3_{\phi}}\le L\|v\|^2_{H^1_{\phi}},
\end{equation}
where the constant $L$ is independent of the shift parameter $s$ in the definition of $\phi=\phi_{\eb,s}$. Applying
the Gronwall inequality to that relation and using the weighted $H^{-1}\to H^1$ regularity of the Laplacian, we end up with \eqref{2.lip} and finish the proof of the theorem.
\end{proof}
\begin{corollary}\label{Cor2.semi} Under the assumptions of Theorem \ref{Th2.unireg}, the Cahn-Hilliard problem \eqref{eqmain} generates a dissipative semigroup $S(t)$ in the phase space $\Phi_b$:
\begin{equation}\label{2.semi}
S(t):\Phi_b\to\Phi_b,\ \ \ S(t)u_0:=u(t),
\end{equation}
where $u(t)$ solves \eqref{eqmain} with $u(0)=u_0$. Moreover, this semigroup is locally Lipschitz continuous in the $H^{-1}_b$-norm:
\begin{equation}\label{2.llip}
\|S(t)u_1-S(t)u_2\|_{H^{-1}_b(\Omega)}\le Ce^{Lt}\|u_1-u_2\|_{H^{-1}_b(\Omega)},
\end{equation}
where constants $C$ and $L$ depend only on the $\Phi_b$-norms of $u_1$ and $u_2$.
\end{corollary}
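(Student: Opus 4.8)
The plan is to read the corollary directly off the two main theorems already established. Theorem~\ref{Th1.main} produces, for each $u_0\in\Phi_b$, a solution $u(t)$ of \eqref{eqmain} in the sense of Definition~\ref{defsol} satisfying the dissipative estimate \eqref{1.dis}, and Theorem~\ref{Th2.unireg} asserts that this solution is unique; hence the formula $S(t)u_0:=u(t)$ defines a genuine (single-valued) map on $\Phi_b$. First I would check $S(t):\Phi_b\to\Phi_b$ together with dissipativity: estimate \eqref{1.dis} bounds $\|u(t)\|_{W^{1,2}_b}^2+\|F(u(t))\|_{L^1_b}$ by $C\|u_0\|_{\Phi_b}^2e^{-\alpha t}+C(\|g\|^2_{L^2_b}+1)$ for all $t\ge0$, which both shows $u(t)\in\Phi_b$ and, letting $t\to\infty$, exhibits a bounded absorbing set in $\Phi_b$, i.e. the dissipativity required by the statement.

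Next I would verify the semigroup identity $S(t+\tau)=S(t)\circ S(\tau)$ together with $S(0)=\mathrm{Id}$. Since, by Definition~\ref{defsol}, $u\in L^\infty(\R_+,\Phi_b)\cap C([0,\infty),L^2_{loc}(\overline\Omega))$, the value $u(\tau)=S(\tau)u_0$ lies in $\Phi_b$, and the time shift $t\mapsto u(t+\tau)$ is again a solution of \eqref{eqmain} in the sense of Definition~\ref{defsol} with initial datum $u(\tau)$ (all clauses on $[0,\infty)$ follow from the corresponding clauses for $u$ on $[\tau,\infty)$). Uniqueness from Theorem~\ref{Th2.unireg} then forces $u(t+\tau)=S(t)(S(\tau)u_0)$, which is the semigroup law. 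This is the step where a careless argument could slip, since it relies on translates of solutions being solutions and on uniqueness to glue the two orbits; but it is routine here.

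Finally, for the local Lipschitz property \eqref{2.llip}, I would simply pass from the weighted estimate to the uniformly local one. Estimate \eqref{2.lip} reads $\|u_1(t)-u_2(t)\|_{H^{-1}_{\phi_{\eb,s}}}\le Ce^{Kt}\|u_1(0)-u_2(0)\|_{H^{-1}_{\phi_{\eb,s}}}$ with $C,K$ depending only on the $\Phi_b$-norms of $u_1(0),u_2(0)$ and independent of $s\in\R$. Taking $\sup_{s\in\R}$ of both sides and using the norm equivalence $\|v\|_{H^{-1}_b}\sim\sup_{s\in\R}\|v\|_{H^{-1}_{\phi_{\eb,s}}}$ (see \cite{Z4}), exactly as in the derivation of \eqref{1.dis} from \eqref{1.w-dis}, yields \eqref{2.llip} with $L=K$. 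I do not expect a genuine obstacle: the whole corollary is bookkeeping on top of Theorems~\ref{Th1.main} and~\ref{Th2.unireg}, the only points deserving a line of care being the two just mentioned — that time-translates of solutions are solutions, and that the constants in \eqref{2.lip} are $s$-independent, the latter already built into Theorem~\ref{Th2.unireg} through the uniform bound \eqref{2.f}.
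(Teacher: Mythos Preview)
Your proposal is correct and follows exactly the approach of the paper: take the supremum over the shift parameter $s\in\R$ in \eqref{2.lip} to obtain \eqref{2.llip}, and read off the remaining assertions (well-definedness, dissipativity, semigroup property) as immediate consequences of Theorems~\ref{Th1.main} and~\ref{Th2.unireg}. The paper's own proof is a two-line remark to this effect; your version simply spells out the bookkeeping in more detail.
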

Indeed, in order to verify \eqref{2.llip}, it is sufficient to take the supremum over the shift parameter $s\in\R$ from both sides of \eqref{2.lip} and the rest assertions are immediate corollaries of Theorem \ref{Th2.unireg}.
\par
We now discuss the further regularity and smoothing property for the solutions of the Cahn-Hilliard equation with regular potentials in the uniformly local spaces.
\begin{corollary}\label{Cor1.smooreg} Let the assumptions of Theorem \ref{Th2.unireg} hold. Then, for every $t>0$, $u(t)\in H^2_b(\Omega)$ and the following estimate is valid:
\begin{multline}\label{2.smooth}
\|u(t)\|_{H^2_b(\Omega)}^2+\|\Dt u\|_{H^{-1}_b(\Omega)}^2+\sup_{\kappa\in\R}\int_t^{t+1}\|\Nx\Dt u(s)\|_{L^2(\Omega_{[\kappa,\kappa+1]})}^2\,ds\le\\\le C\frac{t+1}t\|u_0\|_{\Phi_b}^2e^{-\alpha t}+C(1+\|g\|^2_{L^2_b(\Omega)}),
\end{multline}
where the positive constants $C$ and $\alpha$ are independent of $t$.
\end{corollary}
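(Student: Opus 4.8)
The estimate \eqref{1.dis} controls $\Nx\mu$ only in an integrated-in-time sense, so the missing ingredient for an $H^2_b$-bound on $u(t)$ is a pointwise-in-time bound for $\Nx\mu(t)$ in $L^2_b$, equivalently for $\Dt u(t)=\Dx\mu$ in $H^{-1}_b$. I would obtain it by a smoothing argument run on the time-differentiated equation, essentially re-running the weighted $H^{-1}$-computation of Theorem~\ref{Th2.unireg}, now for $\Dt u$. The $H^2_b$-bound then follows from Lemma~\ref{Lem1.ellreg} applied to $\Dx u-f(u)=g-\mu$, the Friedrichs inequality $\|\mu(t)\|_{L^2_\phi}\le C\|\Nx\mu(t)\|_{L^2_\phi}$ (valid since $\mu|_{\partial\Omega}=0$), and the already obtained bound for $\Nx\mu(t)$.

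\textbf{Step 1: the linearized $H^{-1}$-inequality.} Put $\psi:=\Dt u$ and $v:=(-\Dx)^{-1}\psi$; note $\psi|_{\partial\Omega}=0$ and, since $\mu|_{\partial\Omega}=0$ and $\Dt u=\Dx\mu$ by the first equation of \eqref{eqmain}, one has $v=(-\Dx)^{-1}\Dx\mu=-\mu$, hence $\|v\|_{H^k_\phi}=\|\mu\|_{H^k_\phi}$. Differentiating the $\mu$-equation in time gives $\Dt v=\Dx\psi-f'(u)\psi$ with $\psi=-\Dx v$, which is precisely \eqref{2.dif} with $(w,l(t))$ replaced by $(\psi,f'(u(t)))$. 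Here $f'(u)\ge-K$ by \eqref{freg}, and \eqref{2.freg} together with \eqref{1.dis} give $\|f'(u(t))\|_{L^1_b(\Omega)}\le C_1\|F(u(t))\|_{L^1_b}+C\le C$ with $C$ depending only on $\|u_0\|_{\Phi_b}$ and $\|g\|_{L^2_b}$. Thus the derivation of \eqref{2.mmminus} carries over and produces, with $\phi=\phi_{\eb,s}$,
\[
\ddt\|v(t)\|^2_{H^1_\phi}+\alpha\|v(t)\|^2_{H^3_\phi}\le L\|v(t)\|^2_{H^1_\phi},
\]
with $L$ independent of $s\in\R$ and of $\eb\ll1$; note $\|v(t)\|^2_{H^1_\phi}\sim\|\Nx\mu(t)\|^2_{L^2_\phi}=\|\Dt u(t)\|^2_{H^{-1}_\phi}$ and $\|\Nx\Dt u(t)\|^2_{L^2_\phi}\le C\|v(t)\|^2_{H^3_\phi}$ since $\Dx v=-\Dt u$.

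\textbf{Step 2: smoothing and the three bounds.} Multiplying the inequality of Step~1 by $t$ and integrating over $[0,t]$ with $t\le1$ yields
\[
t\|v(t)\|^2_{H^1_{\phi_{\eb,s}}}+\alpha\int_0^t\tau\|v(\tau)\|^2_{H^3_{\phi_{\eb,s}}}\,d\tau\le(1+L)\int_0^1\|v(\tau)\|^2_{H^1_{\phi_{\eb,s}}}\,d\tau,
\]
the boundary term at $\tau=0$ dropping because the right-hand side equals $(1+L)\|\Nx\mu\|^2_{L^2_{\phi_{\eb,s}}([0,1]\times\Omega)}$, which is finite and, by the weighted estimate \eqref{1.w-dis} with $T=0$, bounded uniformly in $s$ by $C\|u_0\|^2_{\Phi_b}+C(1+\|g\|^2_{L^2_b})$. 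Taking $\sup_{s\in\R}$ and using $\|\cdot\|_{L^p_b}\sim\sup_s\|\cdot\|_{L^p_{\phi_{\eb,s}}}$ gives $\|\Dt u(t)\|^2_{H^{-1}_b}\le\tfrac Ct(\|u_0\|^2_{\Phi_b}+1+\|g\|^2_{L^2_b})$ for $0<t\le1$. For $t\ge1$ I would apply this same estimate to the solution issued from $u(t-1)\in\Phi_b$ (legitimate by the uniqueness of Theorem~\ref{Th2.unireg}) and bound $\|u(t-1)\|^2_{\Phi_b}$ by \eqref{1.dis}; combining the two ranges yields the stated $\tfrac{t+1}{t}e^{-\alpha t}$-bound for $\|\Dt u(t)\|^2_{H^{-1}_b}$ after enlarging $C$. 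The $H^2_b$-bound for $u(t)$ then follows by plugging this into Lemma~\ref{Lem1.ellreg} applied to $\Dx u-f(u)=g-\mu$, via $\|\mu(t)\|_{L^2_\phi}\le C\|\Nx\mu(t)\|_{L^2_\phi}$, and taking $\sup_s$. Finally, integrating the inequality of Step~1 over $[t,t+1]$ bounds $\int_t^{t+1}\|v(\tau)\|^2_{H^3_{\phi_{\eb,s}}}\,d\tau$ by $\|v(t)\|^2_{H^1_{\phi_{\eb,s}}}$ plus $\int_t^{t+1}\|v(\tau)\|^2_{H^1_{\phi_{\eb,s}}}\,d\tau$, both already controlled (by the previous bounds and \eqref{1.dis}); since $\|\Nx\Dt u\|^2_{L^2_\phi}\le C\|v\|^2_{H^3_\phi}$, taking $\sup_s$ controls the last term of \eqref{2.smooth}.

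\textbf{Main obstacle.} As in Theorem~\ref{Th2.unireg}, the only genuine difficulty in Step~1 is the non-sign-definite weighted term $C\eb(\phi f'(u)\Nx v,\Nx v)$ generated by the commutator of the weight with $(-\Dx)^{-1}$; I would absorb it into $\alpha\|v\|^2_{H^3_\phi}$ exactly as there, using $\|f'(u(t))\|_{L^1_b}\le C$ and the interpolation inequality $\|\Nx v\|^2_{L^\infty_{\phi^{1/2}}}\le C\|v\|^{1/2}_{H^1_\phi}\|v\|^{3/2}_{H^3_\phi}$ for $\eb$ small. The other point, that $\Dt u$ is a priori only a distribution, is dealt with as in Theorem~\ref{Th1.main}: the manipulations are performed on the smooth finite-support approximations $u_n$ --- for which the vanishing of $t\,\|v_n(t)\|^2_{H^1_\phi}$ as $t\to0$ is immediate --- and the resulting $n$-independent bounds are passed to the limit.
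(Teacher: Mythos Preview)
Your proposal is correct and follows essentially the same route as the paper: differentiate in time, set $v=(-\Dx)^{-1}\Dt u=-\mu$, re-run the weighted $H^{-1}$ argument of Theorem~\ref{Th2.unireg} to obtain the differential inequality \eqref{2.mmminus} for $v$, multiply by $t$ and integrate to get the smoothing bound for $\|\Dt u(t)\|_{H^{-1}_\phi}$ via \eqref{1.w-dis}, and then recover $\|u(t)\|_{H^2_b}$ from Lemma~\ref{Lem1.ellreg}. Your treatment of $t\ge1$ (restarting from $u(t-1)$) and of the time-integrated $\Nx\Dt u$ term is slightly more explicit than the paper's, but the argument is the same.
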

\begin{proof} Indeed, let us formally differentiate equation \eqref{eqmain} by $t$ (since the solution is unique, this action can be easily justified by the appropriate approximation procedure) and denote $w=\Dt u$ and $v=(-\Dx)^{-1}\Dt u$. Then, these functions solve the analogue of equation \eqref{2.dif}:
\begin{equation}\label{2.dtdif}
\Dt v=\Dx w-f'(u(t))w.
\end{equation}
Multiplying this equation by $\Nx(\phi\Nx v)$ and arguing exactly as in the proof of Theorem \ref{Th2.unireg},
we obtain the estimate \eqref{2.mmminus}. Multiplying that estimate by $t$ and integrating over $[0,t]$, we have
\begin{equation}\label{2.??}
t\|\Nx v(t)\|^2_{L^2_\phi}\le C\int_0^t\|\Nx v(t)\|_{L^2_\phi}^2\,dt,\ \ t\le1.
\end{equation}
Since, according to equation \eqref{eqmain}, $v(t)=\mu(t)$, the right-hand side of \eqref{2.??} can be estimated using \eqref{1.w-dis}. That gives
\begin{equation}\label{1.w-smooth}
t\|\Dt u(t)\|_{H^{-1}_{\phi}}^2\le C(\|u_0\|_{W^{1,2}_\phi}^2+\|F(u_0)\|_{L^1_{\phi}}+\|g\|^2_{L^2_\phi(\Omega)}+1), \ t\le1
\end{equation}
which together with the dissipative estimate \eqref{1.dis} and the trick with taking supremum over the shift parameter, give the desired estimate \eqref{2.smooth} for $\Dt u$. In order to obtain the desired estimate \eqref{2.smooth} for the $H^2_b$-norm, it is now sufficient to use Lemma \ref{Lem1.ellreg}. Corollary \ref{Cor2.semi} is proved.
\end{proof}

Thus, due to the embedding $H^2_b(\Omega)\subset C_b(\Omega)$ the regular non-linearity $f$ becomes subordinated to the linear part of the equation (no matter how fast does it grow) and obtaining the further regularity of solutions is reduced to the standard bootstraping procedure using the highly developed weighted theory for the linear equations. We do not discuss that standard thing here and the rest of the section will be devoted to more interesting and more complicated case of {\it singular} non-linearity $f$. In that case, we have to pose rather restrictive assumption that there exist a {\it convex} function $R:(-1,1)\to\R$ such that
\begin{equation}\label{2.fsing}
\alpha_2R(u)-C_1\le |f(u)|\le \alpha_1 R(u)+C_1,\ \  |f'(u)|\le \alpha_3 |f(u)|^{8/5}+C_3.
\end{equation}
Roughly speaking, condition \eqref{2.fsing} means that the singularities of the function $f$ at $u=\pm1$ are sufficiently strong. In particular,
the function
$$
f(u)=\frac u{(1-u^2)^\gamma}-Ku
$$
satisfies that assumptions if and only if $\gamma\ge 5/3$. Unfortunately, we are unable to handle the most physical case of logarithmic potential
\begin{equation}\label{2.log}
f(u)=\log\frac{1+u}{1-u}-Ku
\end{equation}
and we do not know whether or not the uniqueness result holds for that nonlinearity.
\par
The following theorem gives the analogue of the uniqueness result of Theorem \ref{Th2.unireg} for the case of sufficiently strong singular potentials.
\begin{theorem}\label{Th2.unising} Let the non-linearity $f$ be singular and satisfy assumptions \eqref{fsing} and \eqref{2.fsing}. Let also $g\in L^6_b(\Omega)$. Then, for every $u_0\in\Phi_b$, the solution (in the sense of Definition \ref{defsol}) of equation \eqref{eqmain} is unique and estimate \eqref{2.lip} hold for any two solutions of that equation (with different initial data).
\end{theorem}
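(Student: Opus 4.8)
The plan is to imitate the proof of Theorem \ref{Th2.unireg}: reduce the uniqueness and \eqref{2.lip} to a weighted $H^{-1}$--differential inequality for the difference of two solutions, the only genuinely new point being the control of the ``bad'' term $C\eb^2(\phi l(t)\Nx v,\Nx v)$, which now has to be absorbed by means of the stronger singularity assumption \eqref{2.fsing} together with the space--time $L^2_tL^6_x$--regularity of $f(u)$ provided by Corollary \ref{Cor1.reg26} (this is exactly why $g\in L^6_b$ is imposed). So, first I would set $w:=u_1-u_2$, $v:=(-\Dx)^{-1}w$ and observe, exactly as for \eqref{2.dif}, that these solve $\Dt v=\Dx w-l(t)w$ with $l(t)=\int_0^1 f'(\kappa u_1(t)+(1-\kappa)u_2(t))\,d\kappa$; since $f_0$ is monotone increasing we still have $l(t)\ge-K$, all the terms make sense because $u_1,u_2$ are solutions in the sense of Definition \ref{defsol} (and $f(u_i)\in L^2_{loc}(\R_+,L^6_b)$ by Corollary \ref{Cor1.reg26}), and multiplying by $\Nx(\phi\Nx v)$ and repeating verbatim the manipulations that led to \eqref{2.minus} I arrive at
\[
\frac d{dt}\|v\|^2_{H^1_\phi}+2\alpha\|v\|^2_{H^3_\phi}\le K\bigl(\|v\|^2_{H^1_\phi}+\|v\|^2_{H^2_\phi}\bigr)+C\eb^2\bigl(\phi\, l(t)\Nx v,\Nx v\bigr),
\]
with $C$ independent of $\eb\ll1$ and of the shift parameter $s$ in $\phi=\phi_{\eb,s}$. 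The $H^2$--term is absorbed between $H^3$ and $H^1$ as before, so everything reduces to the last term.

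The key step is a pointwise bound on $l$. Using that $R$ is convex and the three relations in \eqref{2.fsing} (namely $|f|\le\alpha_1R+C_1$, $\alpha_2 R-C_1\le|f|$ and $|f'|\le\alpha_3|f|^{8/5}+C_3$), one gets
\[
|l(t)|\le C\bigl(1+|f(u_1(t))|^{8/5}+|f(u_2(t))|^{8/5}\bigr),
\]
hence $\|l(t)\|_{L^{15/4}_\phi}\le C\bigl(1+\|f(u_1(t))\|^{8/5}_{L^6_\phi}+\|f(u_2(t))\|^{8/5}_{L^6_\phi}\bigr)$ and consequently
\[
\int_t^{t+1}\|l(\tau)\|^{5/4}_{L^{15/4}_{\phi_{\eb,s}}}\,d\tau\le C\Bigl(1+\sum_{i=1}^2\ \sup_{\sigma\in\R}\|f(u_i)\|^2_{L^2([t,t+1],L^6(\Omega_{[\sigma,\sigma+1]}))}\Bigr)\le C,
\]
uniformly in $t\ge0$ and in the shift $s$, by Corollary \ref{Cor1.reg26} and the dissipative estimate \eqref{1.dis} (the passage from the uniformly local norm to the $\phi$--weighted one being the standard one, using the subadditivity of $x\mapsto x^{1/3}$ and $\sum_\sigma e^{-c\eb|\sigma-s|}<\infty$).

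To split off $l$ I apply the weighted H\"older inequality with exponents $15/4$ and $15/11$ followed by the Gagliardo--Nirenberg inequality in $\R^3$,
\[
\|\Nx v\|_{L^{30/11}_\phi}\le C\|v\|_{H^1_\phi}^{4/5}\|v\|_{H^3_\phi}^{1/5},
\]
obtained by interpolating $\Nx v$ between $L^2_\phi$ and $L^6_\phi$ and using $\|v\|_{H^2_\phi}\le C\|v\|^{1/2}_{H^1_\phi}\|v\|^{1/2}_{H^3_\phi}$ (the weight handled via \eqref{1.phi}, exactly as for the $L^\infty_{\phi^{1/2}}$--interpolation in the proof of Theorem \ref{Th2.unireg}). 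This gives
\[
\bigl(\phi\, l(t)\Nx v,\Nx v\bigr)\le C\|l(t)\|_{L^{15/4}_\phi}\|v\|^{8/5}_{H^1_\phi}\|v\|^{2/5}_{H^3_\phi}\le \delta\|v\|^2_{H^3_\phi}+C_\delta\|l(t)\|^{5/4}_{L^{15/4}_\phi}\|v\|^2_{H^1_\phi}
\]
by Young's inequality with exponents $5$ and $5/4$. It is precisely here that the exponent $8/5$ in \eqref{2.fsing} is sharp: the resulting power $1/5$ on $\|v\|_{H^3_\phi}$ forces the H\"older-in-time exponent $5/4$, which is exactly the integrability of $\|l(t)\|_{L^{15/4}_\phi}$ available from Corollary \ref{Cor1.reg26}; any larger singularity exponent would break this balance.

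Fixing $\eb$ and $\delta$ small enough and absorbing the $H^3$--term, I obtain the analogue of \eqref{2.mmminus} with a time--dependent coefficient,
\[
\frac d{dt}\|v\|^2_{H^1_\phi}+\tfrac\alpha2\|v\|^2_{H^3_\phi}\le\bigl(L+h(t)\bigr)\|v\|^2_{H^1_\phi},\qquad h(t):=C\|l(t)\|^{5/4}_{L^{15/4}_{\phi_{\eb,s}}},
\]
where $L$ and $h$ are independent of the shift $s$ and $\int_t^{t+1}h(\tau)\,d\tau\le C$ uniformly in $t$. Gronwall's inequality then yields $\|v(t)\|^2_{H^1_\phi}\le C e^{Kt}\|v(0)\|^2_{H^1_\phi}$, and the weighted $H^{-1}\to H^1$ regularity of the Laplacian (so that $\|v\|_{H^1_\phi}\sim\|w\|_{H^{-1}_\phi}$) gives \eqref{2.lip}; uniqueness is the special case $u_1(0)=u_2(0)$, and the constants depend only on the $\Phi_b$--norms of the two initial data and on $\|g\|_{L^6_b}$. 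I expect the main obstacle to be exactly the exponent bookkeeping described above: one must manufacture, out of \eqref{2.fsing} and Corollary \ref{Cor1.reg26}, a splitting of $(\phi\, l\,\Nx v,\Nx v)$ of the form $\delta\|v\|^2_{H^3_\phi}+h(t)\|v\|^2_{H^1_\phi}$ with $h\in L^1_{loc}$, and this is possible only because the singularity is strong enough that $l\in L^{5/4}_{loc}(\R_+,L^{15/4}_b)$ — which fails precisely for the logarithmic potential \eqref{2.log}.
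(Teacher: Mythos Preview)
Your overall strategy --- reduce to \eqref{2.minus}, bound $|l(t)|$ pointwise by $C(1+|f(u_1)|^{8/5}+|f(u_2)|^{8/5})$ via the convexity of $R$, and then absorb the bad term via interpolation --- matches the paper's, and your derivation of the local $L^{5/4}$-in-time, $L^{15/4}$-in-space control on $l$ from Corollary~\ref{Cor1.reg26} is correct (this is precisely the paper's \eqref{2.lreg}). The gap is in the weighted Gagliardo--Nirenberg step
\[
\|\Nx v\|_{L^{30/11}_\phi}\le C\|v\|_{H^1_\phi}^{4/5}\|v\|_{H^3_\phi}^{1/5},
\]
which is \emph{false} for the exponential weight $\phi=\phi_{\eb,s}$. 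Take $v$ supported in a single unit cell $\Omega_{[N,N+1]}$ far from the weight center: the left side scales like $\phi(N)^{11/15}$ in the weight while the right side scales like $\phi(N)$, and the ratio $\phi(N)^{-4/15}=e^{4\eb N/15}$ is unbounded. The underlying reason is the weight-homogeneity mismatch: the $L^p_\phi$-norm carries the weight as $\phi^{1/p}$, so the Sobolev embedding $H^1_\phi\hookrightarrow L^6_\phi$ (and hence your $L^2_\phi$--$L^6_\phi$ interpolation of $\Nx v$) fails with the \emph{same} $\phi$ on both sides. Note that in the regular case the paper's analogous inequality, $\|\Nx v\|_{L^\infty_{\phi^{1/2}}}^2\le C\|v\|_{H^1_\phi}^{1/2}\|v\|_{H^3_\phi}^{3/2}$, is stated with weight $\phi^{1/2}$ on the $L^\infty$ side precisely to balance this homogeneity; your inequality has no such correction.

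This is exactly why the paper ``proceeds in a more accurate way'': it integrates \eqref{2.minus} in time, decomposes the bad term into local pieces over $\Omega_{[\kappa,\kappa+1]}$ via \eqref{2.wgood}, applies H\"older and the \emph{space--time} interpolation $\|z\|_{L^{10}_tL^{30/11}_x}\le C\|z\|_{L^\infty_tL^2_x}^{4/5}\|z\|_{L^2_tH^2_x}^{1/5}$ on each piece (unweighted, on a bounded domain), and then --- because the resulting term $\int_\kappa\phi_{\eb,s}(\kappa)\|\Nx v\|_{L^\infty_tL^2(\Omega_{[\kappa,\kappa+1]})}^2\,d\kappa$ still cannot be controlled by $\|\Nx v\|_{L^\infty_tL^2_{\phi}}^2$ (only the one-sided estimate \eqref{2.wside} holds) --- closes the argument by multiplying through by a second, slower weight $\phi_{\eb/2,l}$, integrating in $s$, and invoking the convolution-type bound \eqref{2.ok}. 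Your differential-inequality-plus-Gronwall route would work \emph{if} you could manufacture a splitting $(\phi\, l\,\Nx v,\Nx v)\le\delta\|v\|_{H^3_\phi}^2+h(t)\|v\|_{H^1_\phi}^2$ with $h\in L^1_{loc}$ uniformly in $s$, but the weight mismatch blocks precisely this step, and the paper's local-then-reweighted mechanism is what actually handles it.
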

\begin{proof}
Let, as in Theorem \ref{Th2.unireg}, $u_1(t)$ and $u_2(t)$ be two solutions of the Cahn-Hilliard equation, $w(t):=u_1(t)-u_2(t)$ and $v(t):=(-\Dx)^{-1}w(t)$. Then, these functions satisfy equation \eqref{2.dif}. In addition, due to assumptions \eqref{2.fsing}, we have
$$
|l(t)|\le C(|f(u_1(t))|^{8/5}+|f(u_2(t))|^{8/5}+1)
$$
which, together with Corollary \ref{Cor1.reg26} give
\begin{equation}\label{2.lreg}
\sup\nolimits_{t\in\R_+,\ s\in\R}\|l\|_{L^{5/4}([t,t+1],L^{15/4}(\Omega_{[s,s+1]}))}\le C=C_{u_1,u_2}.
\end{equation}
Moreover, arguing as in the proof of Theorem \ref{Th2.unireg}, we obtain the differential inequality \eqref{2.minus}.
As we will see below, the regularity \eqref{2.lreg} is sufficient to control the additional weighted term and to close the uniqueness estimate. However, since \eqref{2.lreg} does not imply that $l\in L^{5/4}([t,t+1], L^{15/4}_b(\Omega))$,  we should proceed in a more accurate way. Namely, integrating inequality \eqref{2.minus} over $t$, we end up with
\begin{multline}\label{2.iminus}
\|\Nx v\|_{L^\infty([0,t], L^2_{\phi_\eb})}^2+\alpha\int_0^t\|v(\tau)\|^2_{H^3_{\phi_\eb}}\,d\tau\le\\\le L\int_0^t\|v(\tau)\|_{H^1_{\phi_\eb}}^2\,d\tau+\|\Nx v(0)\|^2_{L^2_{\phi_\eb}}+C\eb^2\int_0^t(|l(\tau)|,\phi_\eb|\Nx v|^2)\,d\tau.
\end{multline}
It is important to note that the positive constants $C$, $L$ and $\alpha$ are independent of $\eb\to0$. In order to estimate the last term in that inequality, we use the following inequalities
\begin{equation}\label{2.wgood}
C^{-1}\int_{\kappa\in\R} \phi_\eb(\kappa)\|v\|_{L^1(\Omega_{[\kappa,\kappa+1]})}\,d\kappa \le\|v\|_{L^1_{\phi_\eb}}\le C\int_{\kappa\in\R} \phi_\eb(\kappa)\|v\|_{L^1(\Omega_{[\kappa,\kappa+1]})}\,d\kappa,
\end{equation}
where the constant $C$ is independent of $\eb\to0$ (see \cite{EZ}). Assuming without loss of generality that $t\le1$ and using the last inequality together with the control \eqref{2.lreg} and H\"older inequality, we have
\begin{multline}\label{2.huge}
\int_0^t(|l(\tau)|,\phi_\eb|\Nx v|^2)\,d\tau\le C\int_{\kappa\in\R}\phi_\eb(\kappa)\|l\cdot|\Nx v|^2\|_{L^1([0,t]\times\Omega_{[\kappa,\kappa+1]})}\,d\kappa\le\\\le C\int_{\kappa\in\R}\phi_\eb(\kappa)
\|l\|_{L^{5/4}([0,t], L^{15/4}(\Omega_{[\kappa,\kappa+1]}))}\cdot\|\Nx v\|^2_{L^{10}([0,t], L^{30/11}( \Omega_{[\kappa,\kappa+1]}))}\,d\kappa\le\\\le C_1\int_{\kappa\in\R}\phi_\eb(\kappa)\|\Nx v\|^2_{L^{10}([0,t], L^{30/11}( \Omega_{[\kappa,\kappa+1]}))}\,d\kappa.
\end{multline}
Using now the following interpolation
$$
\|z\|_{L^{10}([0,t],L^{30/11})}\le C\|z\|_{L^\infty([0,t],L^2)}^{4/5}\|z\|_{L^2([0,t],H^2)}^{1/5}
$$
with the constant $C$ independent of $t\le1$ (we recall that the space dimension $n=3$), we may continue
estimate \eqref{2.huge}:
\begin{multline}\label{2.hhuge}
C\eb^2\int_0^t(|l(\tau)|,\phi_\eb|\Nx v|^2)\,d\tau\le \\\le
 C_2\int_{\kappa\in\R}\phi_\eb(\kappa)\(\eb^{5/2}\|\Nx v\|^2_{L^{\infty}([0,t],L^2(\Omega_{[\kappa,\kappa+1]}))}\)^{4/5}
 \(\|\Nx v\|^2_{L^2([0,t],H^2(\Omega_{[\kappa,\kappa+1]}))}\)^{1/5}\,d\kappa \le \\\le \alpha\int_0^t\|v(\tau)\|^2_{H^3_{\phi_\eb}}\,d\tau+
 C\eb^{5/2}\int_{\kappa\in\R}\phi_\eb(\kappa)\|\Nx v\|_{L^\infty([0,t],L^2(\Omega_{[\kappa,\kappa+1]}))}^2\,d\kappa.
\end{multline}
However, the last term on the right-hand side of \eqref{2.hhuge} still cannot be estimated by the first term on the left-hand side of \eqref{2.iminus} and we have only the obvious one-sided estimate:
\begin{equation}\label{2.wside}
\|\Nx v\|_{L^\infty([0,t],L^2_{\phi_\eb})}^2\le C\int_{\kappa\in\R}\phi_\eb(\kappa)\|\Nx v\|_{L^\infty([0,t],L^2(\Omega_{[\kappa,\kappa+1]}))}^2\,d\kappa.
\end{equation}
In order to overcome this difficulty, we recall that $\phi_{\eb}(x)=\phi_{\eb,s}(x)\ge e^{-\eb}$ if $x\in\Omega_{[s,s+1]}$. Therefore,
$$
\|\Nx v\|_{L^\infty([0,t],L^2(\Omega_{[s,s+1]}))}\le C\|\Nx v\|_{L^\infty([0,t],L^2_{\phi_{\eb,s}})}^2
$$
which together with \eqref{2.hhuge} and \eqref{2.iminus} give the following estimate
\begin{multline}\label{2.local}
\|\Nx v\|_{L^\infty([0,t],L^2(\Omega_{[s,s+1]}))}^2\le C\(\|\Nx v(0)\|^2_{L^2_{\phi_{\eb,s}}}+\int_0^t\|\Nx v(\tau)\|^2_{H^1_{\phi_{\eb,s}}}\,d\tau\)+\\+ C\eb^{5/2}\int_{\kappa\in\R}\phi_{\eb,s}(\kappa)\|\Nx v\|_{L^\infty([0,t],L^2(\Omega_{[\kappa,\kappa+1]}))}^2\,d\kappa,
\end{multline}
where $t\le1$ and the constant $C$ is independent of $s\in\R$ and $\eb\to0$.
\par
We are now ready to close the uniqueness estimate. To this end, we multiply estimate \eqref{2.local} by $\phi_{\eb/2,l}(s)$,
where $l\in\R$ is a new shift parameter and integrate over $s\in\R$. Then, using the obvious estimate
\begin{equation}\label{2.ok}
\int_{s\in\R}\phi_{\eb/2,l}(s)\int_{\kappa\in\R}\phi_{\eb,s}(\kappa)z(\kappa)\,d\kappa\,ds\le C\eb^{-1}\int_{\kappa\in\R}\phi_{\eb/2,l}(\kappa)z(\kappa)\,d\kappa,
\end{equation}
see \cite{EZ} for details, we arrive at
\begin{multline}\label{2.llocal}
\int_{\kappa\in\R}\phi_{\eb/2,s}(\kappa)\|\Nx v\|_{L^\infty([0,t],L^2(\Omega_{[\kappa,\kappa+1]}))}^2\,d\kappa\le\\\le C\eb^{-1}\(\|\Nx v(0)\|^2_{L^2_{\phi_{\eb/2,s}}}+\int_0^t\|\Nx v(\tau)\|^2_{H^1_{\phi_{\eb/2,s}}}\,d\tau\)+\\+ C\eb^{3/2}\int_{\kappa\in\R}\phi_{\eb/2,s}(\kappa)\|\Nx v\|_{L^\infty([0,t],L^2(\Omega_{[\kappa,\kappa+1]}))}^2\,d\kappa.
\end{multline}
Fixing here $\eb>0$ to be small enough (say, $C\eb^{3/2}= 1/2$) and using \eqref{2.wside}, we conclude that
$$
\|\Nx v(t)\|^2_{L^2_{\phi_{\eb/2,s}}}\le C\|\Nx v(0)\|^2_{L^2_{\phi_{\eb/2,s}}}+C\int_0^t\|\Nx v(\tau)\|^2_{L^2_{\phi_{\eb,s}}}\,d\tau.
$$
This estimate together with the Gronwall inequality give the desired estimate \eqref{2.lip} and finish the proof of the theorem.
\end{proof}
The following corollary is a straightforward analog of Corollaries \ref{Cor2.semi} and \ref{Cor1.smooreg} for the singular case.
\begin{corollary}\label{Cor2.sing} Let the assumptions of Theorem \ref{Th2.unising} hold. Then, problem \eqref{eqmain} generates a dissipative semigroup \eqref{2.semi} in the phase space $\Phi_b$ which is locally Lipschitz continuous in it (i.e., \eqref{2.llip} holds). Moreover, any solution of \eqref{eqmain} possesses the smoothing property \eqref{2.smooth}.
\end{corollary}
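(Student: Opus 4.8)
The plan is to read off the semigroup and its elementary properties directly from the results already established, and then to reproduce the time‑differentiated argument of Corollary \ref{Cor1.smooreg} with Theorem \ref{Th2.unising} in the place of Theorem \ref{Th2.unireg}. For every $u_0\in\Phi_b$ Theorem \ref{Th1.main} provides a solution $u(t)$ satisfying \eqref{1.dis}, and Theorem \ref{Th2.unising} guarantees it is the only one; hence $S(t)u_0:=u(t)$ is well defined, the semigroup identity $S(t+\tau)=S(t)S(\tau)$ follows from uniqueness, and \eqref{1.dis} is precisely the asserted dissipativity. The local Lipschitz continuity \eqref{2.llip} in the $H^{-1}_b$‑norm is obtained, exactly as in the argument following Corollary \ref{Cor2.semi}, by taking the supremum over the shift parameter $s\in\R$ in the weighted estimate \eqref{2.lip} of Theorem \ref{Th2.unising}.

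It remains to prove the smoothing estimate \eqref{2.smooth}. Since the solution is unique, we may differentiate \eqref{eqmain} in $t$ (the procedure being justified by the approximation scheme used in Theorem \ref{Th1.main}) and set $w:=\Dt u$ and $v:=(-\Dx)^{-1}w$, which coincides with the chemical potential $\mu$ up to sign; the pair $(w,v)$ solves equation \eqref{2.dtdif}, i.e. \eqref{2.dif} with the coefficient $l(t)$ replaced by $f'(u(t))$. The point that makes the singular case go through is that $f'(u(t))$ enjoys the same uniformly‑local space–time integrability as the incremental quotient $l(t)$ of Theorem \ref{Th2.unising}: by the second inequality in \eqref{2.fsing} we have $|f'(u(t))|\le\alpha_3|f(u(t))|^{8/5}+C_3$, so Corollary \ref{Cor1.reg26} yields the analogue of \eqref{2.lreg},
\be
\sup\nolimits_{t\in\R_+,\ s\in\R}\|f'(u)\|_{L^{5/4}([t,t+1],L^{15/4}(\Omega_{[s,s+1]}))}\le C.
\ee
Consequently the entire weighted double‑shift machinery of the proof of Theorem \ref{Th2.unising} (inequalities \eqref{2.minus}--\eqref{2.llocal}), now applied to \eqref{2.dtdif} on an arbitrary time window $[\tau_0,t]$ with $0\le\tau_0<t\le1$, goes through verbatim and produces, for some fixed small $\eb$ and modulo the same $\eb/2$‑versus‑$\eb$ weight bookkeeping as in Theorem \ref{Th2.unising}, the integral inequality
$$
\|\Nx v(t)\|^2_{L^2_{\phi_{\eb,s}}}\le C\|\Nx v(\tau_0)\|^2_{L^2_{\phi_{\eb,s}}}+C\int_{\tau_0}^t\|\Nx v(\tau)\|^2_{L^2_{\phi_{\eb,s}}}\,d\tau,
$$
with $C$ independent of $\tau_0$, of $t\le1$ and of $s\in\R$.

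To recover the $t^{-1/2}$ blow‑up rate as $t\to0$ we average this inequality over the initial time: integrating in $\tau_0\in(0,t)$ and interchanging the order of integration in the resulting double integral gives, for $t\le1$,
$$
t\,\|\Nx v(t)\|^2_{L^2_{\phi_{\eb,s}}}\le C\int_0^t\|\Nx v(\tau)\|^2_{L^2_{\phi_{\eb,s}}}\,d\tau.
$$
Since $v=\pm\mu$, the right‑hand side is controlled by the weighted dissipative estimate \eqref{1.w-dis} with $T=0$, and taking the supremum over $s\in\R$ gives $t\|\Dt u(t)\|^2_{H^{-1}_b}\le C(\|u_0\|^2_{\Phi_b}+\|g\|^2_{L^2_b}+1)$ for $t\le1$. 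The bounds on the remaining norms of $\Dt u$ in \eqref{2.smooth} then follow by feeding the now finite quantity $\|\Dt u(\tau_0)\|_{H^{-1}_b}$ ($\tau_0>0$) back into the integral inequality above, and the $H^2_b$‑bound on $u$ follows from Lemma \ref{Lem1.ellreg} applied to the elliptic equation \eqref{1.muell} $\Dx u-f(u)=g-\mu$, exactly as at the end of the proof of Corollary \ref{Cor1.smooreg}. Finally the case $t>1$ is reduced to the case $t\le1$ by running the previous estimates on $[t-\frac12,t]$ and invoking \eqref{1.dis} for $u(t-\frac12)$, which produces the factor $\frac{t+1}{t}$ together with the exponential decay.

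The only genuinely new step is the time‑translation averaging in the third paragraph — forced on us because Theorem \ref{Th2.unising}, unlike Theorem \ref{Th2.unireg}, yields only an integral rather than a pointwise differential inequality — together with the attendant need to keep all constants uniform in the shift $s$ and in the initial time $\tau_0$. The coefficient bound displayed above, which is exactly why the growth restriction $|f'(u)|\le\alpha_3|f(u)|^{8/5}+C_3$ was imposed in \eqref{2.fsing}, is what allows the machinery of Theorem \ref{Th2.unising} to be transplanted without change; everything else is a transcription of the proofs of Corollaries \ref{Cor2.semi} and \ref{Cor1.smooreg}.
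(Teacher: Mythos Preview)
Your proof is correct and follows the approach the paper indicates. The paper's own proof is the single sentence ``these assertions follow from Theorem~\ref{Th2.unising} exactly as in the regular case, so we omit their proofs here,'' and you have carried out precisely that programme: read off the semigroup and the Lipschitz bound from Theorem~\ref{Th2.unising}, then rerun the time-differentiated argument of Corollary~\ref{Cor1.smooreg} with the coefficient $f'(u(t))$ controlled in $L^{5/4}_tL^{15/4}_x$ via \eqref{2.fsing} and Corollary~\ref{Cor1.reg26}.

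The one point where you go beyond mere transcription---and where the paper's ``exactly as in the regular case'' is a little glib---is that Theorem~\ref{Th2.unising}, unlike Theorem~\ref{Th2.unireg}, does not deliver the pointwise differential inequality \eqref{2.mmminus} but only its integrated form after the double-shift closure. Your device of averaging the resulting integral inequality over the initial time $\tau_0\in(0,t)$ to recover the estimate $t\|\Nx v(t)\|^2_{L^2_\phi}\le C\int_0^t\|\Nx v(\tau)\|^2_{L^2_\phi}\,d\tau$ is the correct and natural substitute for the ``multiply by $t$ and integrate'' step in Corollary~\ref{Cor1.smooreg}; it yields exactly \eqref{2.??} and the rest proceeds identically. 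This is a genuine (if small) clarification of what the paper leaves implicit.
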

Indeed, these assertions follow from Theorem \ref{Th2.unising} exactly as in the regular case, so we omit their proofs here.
\par
Let us mention that, in contrast to the regular case, the proved $H^2_b$-regularity \eqref{2.smooth} of solutions is not sufficient to apply the further regularity using the linear theory since we still have the singular term $f(u)$ and need to prove that the solution cannot reach the singular points $u=\pm1$. The next proposition gives such a result.

\begin{proposition}\label{Prop2.sep} Let the assumptions of Theorem \ref{Th2.unising} hold. Then, $f(u(t))\in L^\infty(\Omega)$ for all $t>0$ and
\begin{equation}\label{2.finfty}
\|f(u(t))\|_{L^\infty(\Omega)}\le \frac{1+t^N}{t^N}e^{-\alpha t}Q(\|u_0\|_{\Phi_b})+Q(\|g\|_{L^6_b(\Omega)}),
\end{equation}
where $\alpha$ and $N$ are some positive constants and $Q$ is a monotone function which are independent of $t$ and $u_0$.
\end{proposition}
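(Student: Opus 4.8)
The plan is to run a bootstrap/De Giorgi--Stampacchia iteration on the elliptic equation \eqref{1.muell}, i.e. $\Dx u - f(u) = g-\mu$, exploiting the already-proven $H^2_b$-regularity and smoothing of Corollary \ref{Cor2.sing}. First I would record what is available for fixed $t>0$: by \eqref{2.smooth} we have $u(t)\in H^2_b(\Omega)\subset C_b(\Omega)$, and by Corollary \ref{Cor1.reg26} together with the differentiated equation we also control $\Dt u$ in suitable weighted norms; in particular the right-hand side $h:=g-\mu(t)$ of \eqref{1.muell} lies in $L^6_b(\Omega)$ with norm bounded by the right-hand side of \eqref{2.finfty}-type quantities (here one uses $\mu = v = (-\Dx)^{-1}\Dt u$ as in the proof of Corollary \ref{Cor1.smooreg}, plus $g\in L^6_b$). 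So it suffices to prove an $L^\infty$ bound on $f(u(t))$ in terms of $\|h\|_{L^6_b}$ and $\|u(t)\|_{H^2_b}$, uniformly, and then track the time-dependence through the already-established estimates.

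The core step is the weighted Stampacchia argument. For a level $k>0$ set $f_k(u):=(|f(u)|-k)_+\,\mathrm{sgn}\,f(u)$ and test \eqref{1.muell} against $\phi_{\eb,s}\, f_k(u)^{p}$ for a well-chosen odd power $p$ (or directly against $\phi\,f_k(u)$ and iterate powers), using that $f'(u)\ge 0$ for $f_0$ so that the diffusion term $(\phi f'(u) f_k(u)^{p-1}\Nx u,\Nx u)$ is nonnegative and can be dropped, exactly as in the computation \eqref{1.l6} of Corollary \ref{Cor1.reg26}. This yields, after absorbing the weight-derivative term $(\phi' f_k(u)^p,\partial_{x_1}u)$ via \eqref{1.phi} and Young's inequality and using the maximal regularity \eqref{1.maxreg}, a recursive inequality of the form
\begin{equation}\label{2.prop.rec}
\|f_k(u)\|_{L^{q}_{\phi}}\le C\,\Big(\|h\|_{L^6_{\phi}}+\|u\|_{W^{2,2}_{\phi}}+1\Big)^{\theta}\,|A_k|_{\phi}^{\beta},
\end{equation}
where $A_k:=\{x:|f(u(x))|>k\}$ and $|A_k|_\phi:=\int_{A_k}\phi$, with a gain exponent $\beta>1$ coming from Sobolev embedding in dimension $3$ ($H^1\subset L^6$) applied on the unit cubes and summed against the weight. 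The standard Stampacchia lemma (if $\varphi(k)\le M(k-k_0)^{-\sigma}\varphi(k)^{\beta}$ with $\beta>1$ then $\varphi$ vanishes for $k\ge k_0+d$) then forces $f(u(t))\in L^\infty_\phi$ with a bound depending only on $\|h\|_{L^6_b}$ and $\|u(t)\|_{H^2_b}$; taking the supremum over $s\in\R$ as in the derivation of \eqref{1.dis} from \eqref{1.w-dis} gives the uniformly-local $L^\infty$ bound. One must be slightly careful, as in the proof of Theorem \ref{Th2.unising}, that the weighted quantities behave well under $\sup_s$: the inequalities \eqref{2.wgood}, \eqref{2.ok} from \cite{EZ} handle the passage between $\|\cdot\|_{L^p_\phi}$ and $\sup_s\|\cdot\|_{L^p(\Omega_{[s,s+1]})}$.

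The main obstacle will be twofold. First, making the iteration self-contained: one needs the Stampacchia recursion to close \emph{uniformly in the weight parameter} $\eb$ and in the shift $s$, which is why the weight-derivative term (the source of all the extra difficulty flagged in Remark \ref{Rem1.bad}) must be absorbed using the smallness of $\eb$ exactly as in \eqref{1.maxreg}; this is routine given Lemma \ref{Lem1.ellreg} but requires care with the powers $p$. Second, and more delicate, is producing the precise time-dependence $\frac{1+t^N}{t^N}e^{-\alpha t}Q(\|u_0\|_{\Phi_b})+Q(\|g\|_{L^6_b})$: the $t^{-N}$ singularity and the exponential decay come from chaining the smoothing estimates \eqref{1.wreg}, \eqref{2.smooth} (each contributing a power of $t^{-1}$ and a factor $e^{-\alpha t}$) through the nonlinear dependence of the Stampacchia bound on $\|h\|_{L^6_b}$ and $\|u(t)\|_{H^2_b}$, which introduces the monotone function $Q$; I would first prove the bound for $t\in(0,1]$ with an explicit power $N$, then for $t\ge 1$ use dissipativity (\eqref{1.dis} with $u_0$ replaced by $u(t-1)$, as permitted by the last sentence of Theorem \ref{Th1.hm1}) to upgrade the $t^{-N}$ prefactor to $\frac{1+t^N}{t^N}$ and attach the $e^{-\alpha t}$ decay. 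Everything else—the elliptic testing, Young's inequality, Sobolev embeddings—is standard.
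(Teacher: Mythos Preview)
Your Stampacchia iteration has a genuine gap: the ``gain exponent $\beta>1$ coming from Sobolev embedding $H^1\subset L^6$'' does not materialize. When you test \eqref{1.muell} against $\phi\,f_k(u)^p$ and drop the nonnegative term $(\phi f'(u)\,p\,f_k(u)^{p-1}\Nx u,\Nx u)$, you have thrown away the only gradient information available; what remains is a purely algebraic inequality of the type $\|f_k(u)\|_{L^{p+1}_\phi}\le \|h\|_{L^{p+1}_\phi(A_k)}$, which gives at best $|A_{k'}|_\phi\le C(k'-k)^{-(p+1)}\|h\|_{L^{p+1}_\phi}^{p+1}$ with no recursive supercritical power of $|A_k|_\phi$. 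The Sobolev embedding is for functions with controlled gradient, and $\Nx f_k(u)=f'(u)\chi_{A_k}\Nx u$ is not controlled by the term you dropped (that term contains $f'(u)$, not $f'(u)^2$). Note also that your scheme makes no use of the structural hypothesis $|f'(u)|\le \alpha_3|f(u)|^{8/5}+C_3$ from \eqref{2.fsing}; since separation from the singularities is precisely what is \emph{not} known for weaker (e.g.\ logarithmic) potentials, an argument that ignores \eqref{2.fsing} cannot be expected to close.

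The paper's proof is quite different and much shorter: from \eqref{2.smooth} one has $\Dt u(t)\in H^{-1}_b$ pointwise in $t$, hence $\mu(t)=(-\Dx)^{-1}\Dt u(t)\in H^1_b\subset L^6_b$; repeating the multiplier computation of Corollary~\ref{Cor1.reg26} with $h=g-\mu(t)\in L^6_b$ gives $f(u(t))\in L^6_b$ and, via $L^6$ maximal regularity for the Laplacian, $u(t)\in W^{2,6}_b$, so $\Nx u(t)\in L^\infty$. Now the hypothesis \eqref{2.fsing} is used directly:
\[
\|\Nx f(u(t))\|_{L^{15/4}_b}\le \|\Nx u(t)\|_{L^\infty}\,\|f'(u(t))\|_{L^{15/4}_b}\le C\,\|\Nx u(t)\|_{L^\infty}\bigl(1+\|f(u(t))\|_{L^6_b}\bigr)^{8/5},
\]
and the Sobolev embedding $W^{1,15/4}_b\subset L^\infty$ yields \eqref{2.finfty} with $N=13/5$. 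The point you are missing is that the passage from $L^6$ to $L^\infty$ for $f(u)$ goes through \emph{gradient} control of $f(u)$, and that control is supplied by \eqref{2.fsing}, not by an iteration.
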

\begin{proof} The proof of this result is analogous to \cite{MZ} and the fact that the underlying domain $\Omega$ is unbounded does not make any difference. Nevertheless, for the convenience of the reader, we give below a schematic derivation of the desired estimate. First, according to \eqref{2.smooth} and the embedding $H^1\subset L^6$,
$$
\|\Dt(-\Dx)^{-1}u(t)\|_{L^6_b(\Omega)}\le \frac{1+t}{t}e^{-\alpha t}Q(\|u_0\|_{\Phi_b})+Q(\|g\|_{L^6_b(\Omega)})
$$
and, therefore, analogously to Corollary \ref{Cor1.reg26},
$$
\|f(u(t))\|_{L^6_b(\Omega)}+\|u(t)\|_{W^{2,6}_b(\Omega)}+\|\Nx u(t)\|_{L^\infty(\Omega)}\le \frac{1+t}{t}e^{-\alpha t}Q(\|u_0\|_{\Phi_b})+Q(\|g\|_{L^6_b(\Omega)}),
$$
where we have implicitly used the maximal regularity theorem for the Laplacian in $L^6_b$ and the embedding $W^{1,6}\subset L^\infty$. Finally, using this estimate together with assumptions \ref{2.fsing}, we get
\begin{multline}\label{2.good}
\|f(u(t))\|_{W^{1,15/4}_b(\Omega)}\le C\|\Nx u\|_{L^\infty(\Omega)}\|f'(u(t))\|_{L^{15/4}_b(\Omega)}\le\\\le C\|\Nx u\|_{L^\infty(\Omega)}(1+\|f(u(t))\|_{L^6_b(\Omega)})^{8/5}\le \frac{1+t^{13/5}}{t^{13/5}}e^{-\alpha t}Q(\|u_0\|_{\Phi_b})+Q(\|g\|_{L^6_b(\Omega)})
\end{multline}
and the desired estimate \eqref{2.finfty} follows now from the embedding $W^{1,15/4}\subset L^\infty$.
\end{proof}

\begin{remark}\label{Rem2.sep} Since $\lim_{u\pm1} f(u)=\infty$, there exists a strictly positive (monotone decreasing) function $\delta_f$ depending only on $f$ such that
\begin{equation}\label{2.ssep}
\|u(t)\|_{L^\infty(\Omega)}\le 1-\delta_f(\|f(u(t))\|_{L^\infty(\Omega)}).
\end{equation}
Thus, estimate \eqref{2.good} shows that any solution $u(t)$ of problem \eqref{eqmain} is indeed
separated (uniformly in time) from the singularities $u=\pm1$. This allows to obtain the further regularity of the solution exactly as in the regular case.
\end{remark}

\section{Attractors}\label{s3}
This section is devoted to the long-time behavior of solutions of the Cahn-Hilliard problem. We first discuss the relatively simple case where the uniqueness holds and after that consider the more delicate situation where we do not have the uniqueness result. We also recall that, as usual for the case of unbounded domains (see \cite{MZ} and references therein), the corresponding attractor is not compact in the (uniform) topology of the initial phase space and does not attract in that space, so one should consider the so-called {\it locally}-compact attractors  which attract bounded sets of the initial phase space in the appropriate {\it local} topology. For the convenience of the reader, we recall the definition of such an attractor adapted to the case of Cahn-Hilliard equations.
\begin{definition}\label{Def3.attr} Let the assumptions of Theorems \ref{Th2.unireg} or \ref{Th2.unising} hold and let $S(t):\Phi_b\to\Phi_b$ be the solution semigroup associated with the Cahn-Hilliard equation \eqref{eqmain}. A set $\mathcal A$ is a locally-compact attractor of the Cahn-Hilliard equation if the following conditions are satisfied:
\par
1) $\mathcal A$ is bounded in $\Phi_b$ and is compact in $\Phi_{loc}:=W^{1,2}_{loc}(\overline{\Omega})$;
\par
2) it is strictly invariant: $S(t)\mathcal A=\mathcal A$ for $t\ge0$;
\par
3) it attracts  bounded sets of $\Phi_b$ in the topology of $\Phi_{loc}$, i.e., for every $B$ bounded in $\Phi_b$ and every neighborhood $\Cal O(\Cal B)$ of the set $\Cal A$, there exists time $T=T(B,\Cal O)$ such that
$$
S(t)B\subset\Cal O(\Cal A) \ \ \text{if}\ \ t\ge T.
$$
\end{definition}
Recall that the compactness in $\Phi_{loc}$ simply means that the restriction $\Cal A\big|_{\Omega_{[S_1,S_2]}}$ to any bounded subcylinder $\Omega_{[S_1,S_2]}$ is compact in $W^{1,2}(\Omega_{[S_1,S_2]})$ and the attraction property means that
$$
\lim_{t\to\infty}\dist_{W^{1,2}(\Omega_{[S_1,S_2]})}(S(t)B\big|_{\Omega_{[S_1,S_2]}},\Cal A\big|_{\Omega_{[S_1,S_2]}})=0
$$
for any bounded set $B$ of $\Phi_b$ and any bounded subcyliner $\Omega_{[S_1,S_2]}$. Here and below $\dist_V$ denotes the Hausdorff semidistance in $V$.
\begin{theorem}\label{Th3.glattr} Let the assumptions of Theorem \ref{Th2.unireg} or \ref{Th2.unising} hold. Then, the Cahn-Hilliard equation possesses a locally-compact attractor $\Cal A$ in the sense of the above definition which is generated by all bounded complete solutions of that equation
\begin{equation}\label{3.str}
\Cal A=\Cal K\big|_{t=0},
\end{equation}
where $\Cal K=\{u\in L^\infty(\R,\Phi_b),\ \ u \text{ solves  \eqref{eqmain}}\}$.
 Moreover, this attractor is bounded in $W^{2,2}_b(\Omega)$ and, in the singular case, the attractor is separated from the singularities:
 \begin{equation}\label{3.sep}
 \|u_0\|_{C_b(\Omega)}\le 1-\delta,\ \ \forall u_0\in\Cal A,
\end{equation}
 for some positive $\delta$.
\end{theorem}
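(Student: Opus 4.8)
The statement asserts the existence of a locally-compact attractor $\Cal A$ for the solution semigroup $S(t):\Phi_b\to\Phi_b$ constructed in the previous section (under the assumptions of Theorem~\ref{Th2.unireg} or~\ref{Th2.unising}), together with its structural description \eqref{3.str}, $W^{2,2}_b$-boundedness, and (in the singular case) separation from $u=\pm1$. The plan is to follow the standard scheme for attractors of dissipative semigroups in unbounded domains (as in \cite{MZ}), reduced to verifying three ingredients: a dissipative bounded absorbing set, an asymptotic compactness (or, better, a smoothing) property in the local topology $\Phi_{loc}=W^{1,2}_{loc}(\overline\Omega)$, and continuity of $S(t)$ in that local topology. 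Once these are in place, the abstract attractor theorem gives the existence of $\Cal A$ and its representation as the set of values at $t=0$ of all bounded complete trajectories.

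I would proceed as follows. \emph{Step 1 (absorbing set).} By the dissipative estimate \eqref{1.dis} from Theorem~\ref{Th1.main}, the ball $\mathcal B:=\{u_0\in\Phi_b:\ \|u_0\|_{\Phi_b}^2\le 2C(\|g\|_{L^2_b}^2+1)\}$ is absorbing: for any bounded $B\subset\Phi_b$ there is $T(B)$ with $S(t)B\subset\mathcal B$ for $t\ge T(B)$. \emph{Step 2 (smoothing and precompactness).} By Corollary~\ref{Cor1.smooreg} (regular case) and Corollary~\ref{Cor2.sing} (singular case), $S(t)$ maps $\mathcal B$ into a bounded subset of $H^2_b(\Omega)$ for each fixed $t>0$; in the singular case Proposition~\ref{Prop2.sep} additionally bounds $\|f(u(t))\|_{L^\infty}$ and hence, via Remark~\ref{Rem2.sep}, keeps $\|u(t)\|_{L^\infty}\le 1-\delta$ uniformly. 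Since the embedding $H^2(\Omega_{[S_1,S_2]})\hookrightarrow W^{1,2}(\Omega_{[S_1,S_2]})$ is compact on any bounded subcylinder, the set $S(1)\mathcal B$ is precompact in $\Phi_{loc}$; this is exactly the asymptotic compactness needed for a locally-compact attractor. \emph{Step 3 (continuity).} The local Lipschitz continuity of $S(t)$ in $H^{-1}_b$ from Corollaries~\ref{Cor2.semi}/\ref{Cor2.sing}, interpolated with the uniform $\Phi_b$-bound on $\mathcal B$, yields continuity of $S(t)$ on $\mathcal B$ in the topology of $\Phi_{loc}$ (and even in $L^2_{loc}$, which together with the $H^2_b$-bound upgrades to $W^{1,2}_{loc}$). \emph{Step 4 (conclusion).} Applying the standard theorem on attractors of asymptotically compact semigroups, we obtain $\Cal A=\omega(\mathcal B)$, which is bounded in $\Phi_b$, compact in $\Phi_{loc}$, strictly invariant, and attracts bounded sets in $\Phi_{loc}$; its characterization $\Cal A=\Cal K\big|_{t=0}$ is the usual description of $\omega(\mathcal B)$ via bounded complete trajectories, using that $\mathcal B$ is absorbing. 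The boundedness of $\Cal A$ in $W^{2,2}_b$ follows because $\Cal A=S(1)\Cal A$ lies in the $H^2_b$-bounded set from Step~2, and the separation \eqref{3.sep} follows from Remark~\ref{Rem2.sep} applied along the (time-independent in norm) complete trajectories in $\Cal K$.

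The routine parts here are Steps~1, 3 and~4, which are entirely standard once the estimates of Sections~\ref{s1}--\ref{s2} are granted. The one point deserving care — and the main obstacle — is verifying that the local attraction and compactness survive the passage from the weighted estimates to genuine statements in $\Phi_{loc}$: the smoothing estimates \eqref{2.smooth} and \eqref{2.finfty} are stated with uniformly-local norms, so one must observe that a uniformly-local $H^2_b$ (resp. $L^\infty$) bound restricts to an $H^2$ (resp. $L^\infty$) bound on each subcylinder $\Omega_{[S_1,S_2]}$ with a constant uniform in the shift, which is immediate from the definition \eqref{ubnorm}. After that, compactness of the embedding into $W^{1,2}(\Omega_{[S_1,S_2]})$ and the diagonal/exhaustion argument over an increasing family of subcylinders give the required precompactness and the attraction property in $\Phi_{loc}$, completing the proof.
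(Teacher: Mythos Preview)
Your proposal is correct and follows essentially the same route as the paper: construct an $H^2_b$-bounded (and, in the singular case, separated) absorbing set via the smoothing estimates of Corollaries~\ref{Cor1.smooreg}/\ref{Cor2.sing} and Proposition~\ref{Prop2.sep}, use the compact embedding $H^2\hookrightarrow W^{1,2}$ on bounded subcylinders for precompactness in $\Phi_{loc}$, invoke the Lipschitz continuity \eqref{2.lip} for the closed-graph/continuity condition, and apply the abstract attractor theorem. The only cosmetic difference is that the paper takes the $H^2_b$-ball itself (intersected with $\{\|u\|_{C_b}\le 1-\delta\}$ in the singular case) as the compact absorbing set, whereas you first absorb into a $\Phi_b$-ball and then smooth via $S(1)$; and the paper phrases the continuity requirement as a closed-graph condition rather than continuity in $\Phi_{loc}$.
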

\begin{proof} Indeed, thanks to the abstract theorem on the attractor existence, we need to construct a bounded in $\Phi_b$ and (pre)compact in $\Phi_{loc}$ absorbing set $\Cal B$ of the semigroup $S(t)$ and to verify that $S(t)$ restricted to $\Cal B$ has a closed graph (see \cite{BV1}).
\par
Assume first that the equation is regular and assumptions of Theorem \ref{Th2.unireg} hold. Then, due to Corollary \ref{Cor1.smooreg}, the $H^2_b$-ball
$$
\Cal B:=\{u\in H^2_b(\Omega),\ \|u\|_{H^2_b}\le R\}
$$
is an absorbing set for the semigroup $S(t)$ (if $R$ is large enough) which is obviously compact in $\Phi_{loc}$. The fact that the graph of $S(t)$ is closed on $\Cal B$ is an immediate corollary of the Lipschitz continuity \eqref{2.lip} in a weaker topology.
\par
Let now the equation be singular and the assumptions of Theorem \ref{Th2.unising} hold. Then, due to Corollary \ref{Cor2.sing}, Proposition \ref{Prop2.sep} and estimate \eqref{2.ssep}, the set
$$
\Cal B:=\{ u\in H^2_b(\Omega),\ \ \|u\|_{H^2_b}\le R,\ \ \|u\|_{C_b(\Omega)}\le 1-\delta\}
$$
will be an absorbing set for the semigroup $S(t)$ (if $R$ is large enough and $\delta>0$ is small enough) and it is again compact in $\Phi_{loc}$. The fact that the graph of $S(t)$ is closed is again an immediate corollary of the Lipschitz continuity \eqref{2.lip} proved in Theorem \ref{Th2.unising}.
\par
Thus, the assumptions of the abstract attractors existence theorem are verified in both case. The description \eqref{3.str} is also a standard corollary of that theorem and the fact that attractor is bounded in $H^2_b$ and, in the singular case, is separated from singularities is also immediate since the attractor is a subset of any absorbing set. Theorem \ref{Th3.glattr} is proved.
\end{proof}
\begin{remark} The structure and regularity of the obtained attractor may be further investigated in a standard way. In particular, since the solution on the attractor is proved to be separated from singularities and globally bounded, the usual bootstraping arguments show that the factual regularity of the attractor is restricted only by the smoothness of the domain $\Omega$, nonlinearity $f$ and the external forces $g$ (if all of them are of $C^\infty$ the attractor will also belong to $C^\infty$). Moreover, arguing in a standard way, one may show that this attractor will typically have infinite Hausdorff dimension (it will be so, e.g., if there exists at least one spatially-homogeneous exponentially unstable equilibrium), may obtain the upper and lower bounds for it's Kolmogorov's $\varepsilon$-entropy and so on (see \cite{MZ} for the detailed discussion of a general scheme).
\end{remark}
We now turn to discuss the case where the uniqueness theorem does not hold and only the assumptions of Theorem \eqref{Th1.main}
are satisfied, in particular, it will be so for the case of logarithmic potential \eqref{2.log}. Since, we only have the existence (but not uniqueness) of a solution, we will use the so-called trajectory approach, see \cite{CV,MZ} for more details.
\par
As a first step, following the general scheme,  we introduce the so-called trajectory dynamical system associated with the Cahn-Hilliard equation \eqref{eqmain}.

\begin{definition}\label{Def3.tr} Let a set $K^+\subset L^\infty(\R_+,\Phi_b)$ be the set of all weak solutions $u:\R_+\to\Phi_b$
of the Cahn-Hilliard problem \eqref{eqmain} in the sense of Definition \ref{defsol} (since the $\mu$-component of the solution is uniquely determined by its $u$-component, we omit the $\mu$-part of the solution in that definition) such that the assumptions \eqref{eqdef} are satisfied and the following analogue of the main dissipative estimate holds:
\begin{multline}\label{3.dis}
\|u(t)\|_{W^{1,2}_b(\Omega)}^2+\|F(u(t))\|_{L^1_b(\Omega)}+\|\Dx u\|_{L^2_b([t,t+1]\times\Omega)}^2+\\+\|f(u)\|^2_{L^2_b([t,t+1]\times\Omega)}+\|\Nx\mu\|^2_{L^2_b([t,t+1]\times\Omega)}\le C_ue^{-\alpha t}+C(\|g\|^2_{L^2_b(\Omega)}+1),
\end{multline}
where the constant $C$ is the same as in estimate \eqref{1.dis}, $t\ge0$ is arbitrary (almost arbitrary, being pedantic) and $C_u$ is some positive number depending on the trajectory $u$.
\par
It is not difficult to verify that the translation semigroup
\begin{equation}\label{3.trsem}
T(h):\, K_+\to K_+,\ \ (T(h)u)(t):=u(t+h)
\end{equation}
acts on on the set $K_+$ and, in the case with uniqueness, this semigroup is conjugated to the standard solution semigroup $S(h)$  acting on the usual phase space $\Phi_b$. By this reason, the set $K_+$ is called the  trajectory phase space of problem \eqref{eqmain} and the translation semigroup $T(h)$ acting on this space is often referred as a trajectory dynamical system associated with this equation.
\end{definition}
We intend to find an attractor for the introduced trajectory dynamical system (=trajectory attractor for the initial Cahn-Hilliard equation). To this end, as usual, we need to specify the class of "bounded" sets in $K_+$ and fix the appropriate topology in $K_+$.
\begin{definition}\label{Def3.bound} A set of trajectories  $B\subset K_+$ is bounded if the dissipative estimate \eqref{3.dis}
holds uniformly with respect to all $u\in B$ with the same constant $C_B$, i.e.,
$$
C_u\le C_B<\infty,\ \ \forall u\in B.
$$
Comparing  the dissipative estimates \eqref{1.dis} and \eqref{3.dis}, we see that, at least in the case of uniqueness, the constant $C_u$ is simply related with the $\Phi_b$-norm of the initial data $u_0$. Therefore, in the case of uniqueness, the class of bounded sets thus defined corresponds to the usual bounded sets in the phase space $B$ and gives a natural extension of that concept to the case without uniqueness.
\end{definition}
In this section, we will consider only the so-called weak (trajectory) attractors, so we introduce the topology in $K_+$ in the following way.
\begin{definition}\label{Def2.top} We endow the trajectory phase space $K_+$ by the {\it weak-star} topology of the space
$$
\Theta_+:=L^\infty_{loc}(\R_+,W^{1,2}_{loc}(\overline{\Omega}))\cap L^2_{loc}(\R_+,W^{2,2}_{loc}(\overline{\Omega})).
$$
We recall that $u_n\to u$ in that topology iff, for every time segment $[t,t+T]$ and every finite cylinder $\Omega_{[-S,S]}$,
$u_n\to u$ weak-star in $L^\infty([t,t+T],W^{1,2}(\Omega_{[-S,S]}))$ and weakly in $L^2([t,t+T], W^{2,2}(\Omega_{[-S,S]}))$.
\par
It is important for the attractor theory that every bounded set of $\Theta_+$ is precompact and  metrizable in this weak-star topology (see \cite{CV,RR} for the details).
\end{definition}
We are now ready to define attractor of the trajectory semigroup $T(h)$ acting on $K_+$.
\begin{definition}\label{Def3.trattr} A set $\Cal A_{tr}\subset K_+$ is a global attractor of the trajectory dynamical system
$(T(h),K_+)$ (=trajectory attractor of the Cahn-Hilliard equation \eqref{eqmain}) if
\par
1) $\Cal A_{tr}$ is compact in $K_+$ (endowed by the weak-star topology of $K_+$);
\par
2) strictly invariant: $T(h)\Cal A_{tr}=\Cal A_{tr}$;
\par
3) attracts all bounded (in the sense of Definition \ref{Def3.bound}) sets of $K_+$ in the weak-star topology of $\Theta_+$.
\end{definition}
Finally, the next theorem gives the existence of such an attractor.
\begin{theorem}\label{Th3.trattr} Let the assumptions of Theorem \ref{Th1.main} holds. Then, the Cahn-Hilliard equation \eqref{eqmain} possesses a trajectory attractor $\Cal A_{tr}$ in the sense of the above definition. Moreover, this attractor is also generated by all bounded complete solutions of problem \eqref{eqmain}
\begin{equation}\label{3.strtr}
\Cal A_{tr}=\Cal K\big|_{t\ge0},
\end{equation}
where $\Cal K\subset L^\infty(\R,\Phi_b)$ is a set of all solutions $u:\R\to\Phi_b$ which are defined for all $t$ and satisfy the dissipative estimate \eqref{3.dis} for all $t\in\R$ with $C_u=0$.
\end{theorem}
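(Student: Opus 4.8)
The plan is to deduce the existence of $\Cal A_{tr}$ from the abstract theorem on attractors of translation semigroups (see \cite{CV,MZ}): it is enough to check that $T(h)$ maps $K_+$ into $K_+$, that $(T(h),K_+)$ has a bounded (in the sense of Definition \ref{Def3.bound}) absorbing set $\Cal B$, and that $\Cal B$ is compact in the weak-star topology of $\Theta_+$; then $\Cal A_{tr}$ exists and coincides with the $\omega$-limit set
$$
\Cal A_{tr}=\bigcap_{h\ge0}\Big[\,\bigcup_{\tau\ge h}T(\tau)\Cal B\,\Big]_{\Theta_+},
$$
where $[\,\cdot\,]_{\Theta_+}$ denotes the closure in the weak-star topology of $\Theta_+$. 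The first two points are immediate. If $u\in K_+$, then $u(\cdot+h)$ is again a weak solution on $\R_+$ satisfying \eqref{eqdef}, and \eqref{3.dis} holds for it with the constant $C_ue^{-\alpha h}\le C_u$; hence $T(h)K_+\subset K_+$ and $T(h)$ sends bounded sets to bounded sets. Moreover, by \eqref{3.dis}, the set
$$
\Cal B:=\{u\in K_+\st C_u\le 2C(\|g\|^2_{L^2_b(\Omega)}+1)\}
$$
is bounded, and for any bounded $B\subset K_+$ with constant $C_B$ one has $T(h)B\subset\Cal B$ for all $h\ge h_0$ with $h_0$ depending only on $C_B$, so that $\Cal B$ is absorbing.

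The heart of the matter is the compactness of $\Cal B$ in $\Theta_+$. Since every $u\in\Cal B$ obeys \eqref{3.dis} with the same constant, $\Cal B$ is bounded in $L^\infty(\R_+,W^{1,2}_b(\Omega))\cap L^2_b(\R_+,W^{2,2}_b(\Omega))$, hence in $\Theta_+$; as any bounded subset of $\Theta_+$ is precompact and metrizable in its weak-star topology (\cite{CV,RR}), it remains to prove that $K_+$ is weak-star closed in $\Theta_+$. Take $u_n\in K_+$ with $C_{u_n}\le C_B$ and $u_n\to u$ in $\Theta_+$. By \eqref{eqdef} and \eqref{3.dis}, $\Dx u_n$, $f(u_n)$, $\Nx\mu_n$ are bounded in $L^2_b(\R_+\times\Omega)$; together with $\mu_n\big|_{\p\Omega}=0$ this gives $\mu_n$ bounded in $L^2_b(\R_+,W^{1,2}_b(\Omega))$, whence $\Dt u_n=\Dx\mu_n$ is bounded in $L^2_{loc}(\R_+,W^{-1,2}_{loc}(\overline\Omega))$. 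The Aubin--Lions--Simon lemma then yields $u_n\to u$ strongly in $L^2_{loc}(\R_+,W^{1,2}_{loc}(\overline\Omega))$ and, along a subsequence, almost everywhere in $\R_+\times\Omega$. In the singular case, Fatou's lemma applied to the uniform bound $\|F(u_n)\|_{L^1_b}\le C_B$ (using that $F$ is bounded below and blows up at $\pm1$) forces $-1<u<1$ a.e. in the sense of \eqref{less1}, so that $f(u)$ and $F(u)$ are meaningful; in both cases $f(u_n)\to f(u)$ a.e. and, being bounded in $L^2_b$, converges weakly in $L^2_{loc}$ to $f(u)$. Passing to the limit in the distributional form of \eqref{eqmain} shows that $u$ solves \eqref{eqmain}, and after modification on a null set $u\in C([0,\infty),L^2_{loc}(\overline\Omega))$; weak lower semicontinuity of all the norms in \eqref{3.dis} gives that $u$ satisfies \eqref{3.dis} with $C_u\le\liminf C_{u_n}\le C_B$. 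Hence $u\in K_+$, so $\Cal B$ is closed, and therefore compact, in $\Theta_+$, and the abstract theorem produces the trajectory attractor $\Cal A_{tr}$.

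It remains to verify the representation \eqref{3.strtr}. By the abstract theorem $\Cal A_{tr}$ is strictly invariant, and every $u\in\Cal A_{tr}$ extends (gluing backward pre-images supplied by $T(h)\Cal A_{tr}=\Cal A_{tr}$) to a complete trajectory $\tilde u:\R\to\Phi_b$ with $\tilde u(\cdot+\tau)\big|_{\R_+}\in\Cal A_{tr}\subset\Cal B$ for all $\tau\in\R$. Writing \eqref{3.dis} for $\tilde u(\cdot+\tau)$ and fixing $T\in\R$, $\tau<T$, we obtain
$$
\|\tilde u(T)\|^2_{W^{1,2}_b(\Omega)}+\dots\le 2C(\|g\|^2_{L^2_b(\Omega)}+1)e^{-\alpha(T-\tau)}+C(\|g\|^2_{L^2_b(\Omega)}+1),
$$
and letting $\tau\to-\infty$ shows that $\tilde u$ satisfies \eqref{3.dis} with $C_{\tilde u}=0$, i.e. $\tilde u\in\Cal K$; thus $\Cal A_{tr}\subset\Cal K\big|_{t\ge0}$. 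Conversely, any $\tilde u\in\Cal K$ is, by $C_{\tilde u}=0$, uniformly bounded in $\Phi_b$, so the family $\{\tilde u(\cdot+\tau)\big|_{\R_+}:\tau\le0\}$ is bounded in $K_+$; since $\Cal A_{tr}$ attracts it and $T(-\tau)\big(\tilde u(\cdot+\tau)\big|_{\R_+}\big)=\tilde u\big|_{\R_+}$, letting $\tau\to-\infty$ and using the compactness (hence closedness) of $\Cal A_{tr}$ gives $\tilde u\big|_{\R_+}\in\Cal A_{tr}$. This establishes \eqref{3.strtr}. The only genuinely delicate point in the whole argument is the weak-star closedness of $K_+$ in $\Theta_+$ — securing enough compactness through the bound on $\Dt u_n=\Dx\mu_n$ to pass to the limit in the nonlinear term $f(u_n)$ and, in the singular case, to keep the limit strictly inside $(-1,1)$; everything else is the routine machinery of trajectory attractors.
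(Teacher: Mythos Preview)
Your argument follows the same route as the paper's --- verify an absorbing set, check precompactness/metrizability of bounded sets in the weak-star topology of $\Theta_+$, prove closedness of $\Cal B$, and invoke the abstract attractor theorem --- and in fact fills in precisely the two steps the paper leaves to the reader (the closedness of $\Cal B$ and the representation \eqref{3.strtr}). The Aubin--Lions step, the passage to the limit in the equation, and the derivation of $\Cal A_{tr}=\Cal K\big|_{t\ge0}$ from strict invariance are all sound.

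One small correction: in the singular case you justify $-1<u<1$ a.e.\ by ``Fatou's lemma applied to $\|F(u_n)\|_{L^1_b}\le C_B$, using that $F$ blows up at $\pm1$''. Under assumptions \eqref{fsing} the potential $F$ need \emph{not} blow up at $\pm1$ --- for the logarithmic nonlinearity $f(u)=\log\frac{1+u}{1-u}-Ku$ one computes $F(u)=(1+u)\log(1+u)+(1-u)\log(1-u)-\tfrac{K}{2}u^2$, which stays bounded as $u\to\pm1$. The correct argument uses the bound you already have on $f(u_n)$: since $u_n\to u$ a.e.\ and $|u_n|<1$, if $|u|=1$ on a set $A$ of positive measure then $|f(u_n)|\to\infty$ a.e.\ on $A$ by \eqref{fsing}(2), and Fatou applied to $|f(u_n)|^2$ contradicts the uniform $L^2_b$-bound on $f(u_n)$ from \eqref{3.dis}. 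With this fix your proof is complete.
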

\begin{proof} We first note that the set $K_+$ is not empty due to Theorem \ref{Th1.main}, so the trajectory dynamical system is reasonably defined. Next, analyzing the dissipative estimate \eqref{3.dis} and the definition of a bounded set, we see that the
set $\Cal B\subset K_+$ of all trajectories $u$ which satisfy this estimate with the constant $C_u\le1$ is a bounded absorbing set of the semigroup $T(h)$ and even
$$
T(h)\Cal B\subset\Cal B
$$
for all $h\in\R_+$. Moreover, $\Cal B$ is precompact and metrizable in the topology of $K_+$ (weak-star topology of $\Theta_+$) and clearly the shift semigroup $T(h)$ is continuous in that topology. Thus, in order to be able to apply the standard attractor existence theorem, we only need to verify that $\Cal B$ is closed as a subset of $\Theta_+$ with the weak-star topology. Since this proof is  standard and repeats word by word the proof of the existence of a weak solution (which is constructed exactly by the weak-star limit of solutions of the appropriate approximate problem), we rest  it for the reader.
\par
Thus, due to the abstract attractor existence theorem (see \cite{BV1,CV}), the trajectory attractor exists and is generated by all bounded complete solutions of the Cahn-Hilliard problem \eqref{eqmain}. Theorem \ref{Th3.trattr} is proved.
\end{proof}
\begin{remark}\label{Rem3.str} Using the fact that the appropriate norm of the time derivative $\partial_t u$ is under the control for every weak solution $u$ (due to the dissipative estimate and the first equation \eqref{eqmain}), one can verify that the trajectory attractor $\Cal A_{tr}$ attracts bounded sets of $K_+$ in a {\it strong} topology of
$$
\Theta_+(\nu):=L^\infty_{loc}(\R_+,W^{1-\nu,2}_{loc}(\Omega))\cap L^2_{loc}(\R_+,W^{2-\nu,2}_{loc}(\Omega)),
$$
for every $\nu>0$.
\end{remark}

\section{Weighted energy equalities}\label{s4}
In this section we will mainly consider the case of {\it singular} potentials without uniqueness. We first check that any
weak solution of the Cahn-Hilliard satisfies the weighted  energy {\it equalities} and then, in the next section, prove that the weak trajectory attractor constructed before is compact in a strong topology and the attraction holds in the strong topology as well.
\par
We start with the following lemma which is the key part of our proof of the weighted energy equalities.
\begin{lemma}\label{Lem4.ae} Let the function $u:\R_+\times\Omega\to\R$ be such that
\begin{multline}\label{4.as}
u\in L^\infty(\R_+,H^1_b(\Omega))\cap L^2_b(\R_+,H^2_b(\Omega)),\\ \Dt u\in L^2_b(\R_+,H^{-1}_b(\Omega)),\ \ F(u)\in L^\infty(\R_+,L^1(\Omega))
\end{multline}
and
$$
H:=\Dx u-f(u)\in L^2_b(\R_+,H^{1}_b(\Omega)),
$$
where the nonlinearity $f$ satisfies assumptions \eqref{fsing}.
Then, for all weight functions $\varphi\in L^1(\R)$ satisfying \eqref{1.phi} and  almost all $T_1,T_2\in\R_+$, $T_2>T_1$, we have
\begin{multline}\label{4.energy}
\frac12[(\varphi,|\Nx u(T_2)|^2)-(\varphi,|\Nx u(T_1)|^2)]+[(\varphi, F(u(T_2)))-(\varphi, F(u(T_1)))]=\\=\int_{T_1}^{T_2}(\varphi'\partial_{x_1}u(t),\Dt u(t))-(H(t),\varphi\Dt u(t))\,dt.
\end{multline}
\end{lemma}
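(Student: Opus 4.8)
The plan is to derive \eqref{4.energy} by multiplying the equation $\Dt u = \Dx H + $ (nothing—rather, recall $\mu = -H + g$ but the cleaner route is to work with $H$ directly) by $\varphi\, \Dt u$ and integrating in space and over $[T_1,T_2]$, but since the regularity in \eqref{4.as} is only barely enough to make each term meaningful, the real work is a careful approximation/mollification argument that legitimizes the formal integration by parts. First I would write $\Dt u = \Dx(-\Dx u + f(u)) + (\text{something})$; more precisely, from $H = \Dx u - f(u)$ we have $\Dt u = -\Dx H$ in the sense of distributions (this uses that $u$ solves the Cahn-Hilliard equation with $\mu = -H + g$ and $g$ contributes a gradient term that must be tracked — actually here the statement is purely about a function $u$ with the listed properties and $H := \Dx u - f(u)$, so one only needs the relation that ties $\Dt u$ to $H$; I would assume, as is implicit from the surrounding text, that $\Dt u = -\Dx H$ holds). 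Then formally
\[
(\varphi\,\Dt u, \Dt u)\ \text{makes no sense directly},
\]
so instead I pair the equation $\Dt u = -\Dx H$ against the test object $\varphi\,\Dt u$ only after observing that $\Dt u \in L^2_b(\R_+,H^{-1}_b)$ while $\varphi\,\Dt u$ needs to be paired with $H \in L^2_b(\R_+,H^1_b)$, which is exactly the duality $H^{-1}$–$H^1$ — so $(\Dx H, \varphi\,\Dt u) = -(\Nx H, \Nx(\varphi\,\Dt u))$ is not quite it either. The correct formal computation is: multiply $\Dt u = -\Dx H$ by $\varphi\,H$? No — one wants $\frac{d}{dt}$ of the energy. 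I would instead multiply the identity $H = \Dx u - f(u)$ by $\varphi\,\Dt u$, giving $(\varphi H, \Dt u) = (\varphi\,\Dx u,\Dt u) - (\varphi f(u),\Dt u)$, and then handle the two terms on the right by integration by parts in $x$ and the chain rule in $t$ respectively:
\[
(\varphi\,\Dx u,\Dt u) = -(\varphi\Nx u,\Nx\Dt u) - (\varphi'\px u,\Dt u) = -\tfrac12\ddt(\varphi,|\Nx u|^2) - (\varphi'\px u,\Dt u),
\]
\[
(\varphi f(u),\Dt u) = \ddt(\varphi, F(u)).
\]
Rearranging and integrating over $[T_1,T_2]$ yields \eqref{4.energy} exactly. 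So the formal derivation is short; the substance is justification.

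The justification proceeds by mollification in the $x_1$-direction (the unbounded direction), and possibly also in $t$. I would introduce $u_\delta := \rho_\delta *_{x_1} u$ where $\rho_\delta$ is a standard mollifier in $x_1$; then $u_\delta$ inherits the regularity of $u$ with the extra benefit that $\Dt u_\delta = \rho_\delta *_{x_1} \Dt u$ and $\Dx u_\delta = \rho_\delta *_{x_1}\Dx u$ gain enough smoothness in $x_1$ that the pairing $(\varphi\,\Dx u_\delta, \Dt u_\delta)$ is a genuine $L^2$ pairing and the chain-rule identities above hold rigorously. The commutator $[\rho_\delta *_{x_1}, f](u) = \rho_\delta *_{x_1} f(u) - f(u_\delta)$ is the delicate object because $f$ is singular; here one uses a Friedrichs-type commutator lemma together with the uniform bound $-1 < u < 1$ a.e. (guaranteed by $F(u)\in L^1$ together with \eqref{fsing}) and the $L^2_b$-control on $f(u)$ coming from $H\in L^2_b(\R_+,H^1_b)$ and $\Dx u \in L^2_b$. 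I would show $f(u_\delta) \to f(u)$ and $\rho_\delta *_{x_1} f(u) \to f(u)$ in $L^2_{loc}$, so that the $f$-term and in particular the identity $(\varphi f(u_\delta),\Dt u_\delta) = \ddt(\varphi, F(u_\delta))$ pass to the limit; convexity-type properties of $F$ (from \eqref{fsing}(3), $F(u)+Ku^2/2$ convex) help in controlling $F(u_\delta)$ vs. $F(u)$ by Jensen.

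Next I would observe that the "almost all $T_1, T_2$" qualifier is exactly what makes the endpoint terms legitimate: $t \mapsto (\varphi,|\Nx u(t)|^2)$ and $t\mapsto(\varphi,F(u(t)))$ are in $L^\infty(\R_+)$ and, after the mollified identity is integrated, their sum is absolutely continuous, hence differentiable a.e.; one then passes to the limit $\delta\to 0$ in the integrated-in-time identity for $u_\delta$ (valid for every $T_1<T_2$) and recovers \eqref{4.energy} at Lebesgue points of the endpoint functions. Both right-hand integrands, $(\varphi'\px u,\Dt u)$ and $(H,\varphi\Dt u)$, converge in $L^1_{loc}(\R_+)$ under mollification: $\varphi' \px u_\delta \to \varphi'\px u$ in $L^2_b$ (since $\px u \in L^2_b$, using \eqref{1.phi}), paired against $\Dt u_\delta \to \Dt u$ in $L^2_b(\R_+,H^{-1}_b)$, while $\varphi H_\delta \to \varphi H$ in $L^2_b(\R_+,H^1_b)$ — this last step needs $\varphi \in L^1(\R)$ and \eqref{1.phi} so that multiplication by $\varphi$ preserves the weighted/uniformly-local spaces. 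The main obstacle, as anticipated, is the commutator estimate for the singular nonlinearity together with showing that no boundary-layer contribution appears from the $x_1$-mollification (there is no boundary in the $x_1$-direction since $\Omega = \R\times\omega$, and on $\p\Omega$ one has $u=0$, so the mollifier in $x_1$ commutes with the transverse Dirichlet condition); once these are in place the rest is the routine limiting argument sketched above.
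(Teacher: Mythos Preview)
Your formal computation is correct, and you correctly identify that the substance of the proof is justifying it. However, the mollification-in-$x_1$ strategy has a genuine gap, and the paper's approach is designed precisely to avoid this obstacle.

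The problem is your claim that after $x_1$-mollification ``the pairing $(\varphi\,\Dx u_\delta, \Dt u_\delta)$ is a genuine $L^2$ pairing.'' Mollifying only in the unbounded direction $x_1$ gives arbitrary smoothness in $x_1$ but does nothing for the transverse variables $x'=(x_2,x_3)\in\omega$. You still have $\Dt u_\delta\in L^2_b(\R_+,H^{-1}_b)$ (the $H^{-1}$ deficit is transverse) and the transverse Laplacian piece $\Delta_{x'}u_\delta=\rho_\delta*_{x_1}\Delta_{x'}u$ remains only in $L^2$, not $H^1$, since that would require three transverse derivatives of $u$ and you only have $u\in L^2_b(H^2_b)$. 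So neither side of the $H^{-1}$--$H^1$ duality is improved enough to split $(\varphi H,\Dt u)$ into $(\varphi\Dx u,\Dt u)$ and $(\varphi f(u),\Dt u)$ separately. The same obstruction blocks the chain rule $\tfrac{d}{dt}(\varphi,|\Nx u|^2)=2(\varphi\Nx u,\Nx\Dt u)$: this needs $\Dt u\in L^2(L^2)$, which you do not have and $x_1$-mollification does not provide. The paper explicitly flags this in the remark immediately following the lemma: $\Dx u$ and $f(u)$ \emph{separately} need not lie in $H^1_b$, so the individual pairings may be ill-posed.

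The paper's proof never splits $H$. Instead it works with finite time differences and uses convexity twice: for the (essentially) convex potential $F$, Taylor expansion with nonnegative remainder gives
\[
f(u(\tau))\,\frac{u(\tau+h)-u(\tau)}{h}\ \le\ \frac{F(u(\tau+h))-F(u(\tau))}{h}\ \le\ f(u(\tau+h))\,\frac{u(\tau+h)-u(\tau)}{h},
\]
and the convex functional $v\mapsto(\varphi,|\Nx v|^2)$ yields the analogous sandwich for the gradient term. Adding the two sandwiches and integrating by parts in $x$ recombines the bounds into quantities involving only $H(\tau)$ or $H(\tau+h)$ paired against difference quotients of $u$ --- and \emph{that} pairing is legitimate because $H\in L^2_b(H^1_b)$ while the difference quotient lies in $L^2_b(H^{-1}_b)$. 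After integrating over $\tau\in[T_1,T_2]$ one sends $h\to0$; both sides of the sandwich converge to the same limit for almost all $T_1,T_2$ (Lebesgue points), yielding the equality. No equation for $\Dt u$ is ever invoked --- the lemma is about an arbitrary $u$ with the stated regularity, so your initial attempt to locate a PDE for $u$ was a detour.
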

\begin{remark}\label{Rem.strange} The main difficulty in the proof of this and the next lemmas is that we do not have the maximal $H^1\to H^3$-regularity for the semilinear heat equation
$$
\Dx u-f(u)=H.
$$
By this reason, we are unable to deduce that $\Dx u$ and $f(u)$ {\it separately} belong to $H^1_b$. Thus, although the inner product $(\varphi\Dt u,\Dx u-f(u))$ is well-posed, the terms $(\varphi\Dx u,\Dt u)$ and $(\varphi\Dt u, f(u))$ can be nevertheless {\it ill-posed} and we cannot use the standard methods to verify the energy equality. Instead of that, we obtain the result (following \cite{MiZe}) using the trick based on the convexity arguments. An alternative method, based on the abstract energy equality for the maximal monotone operators (see \cite{Bre}), can be found in \cite{RS} Lemma 4.1 (the analogous result is proved there for the case of bounded domains).
\end{remark}
\begin{proof}[Proof of the lemma] We first note that, without loss of generality, we may think that the potential $F$ is {\it convex} (actually, in a general situation it differs from the convex one by the non-essential linear term). Then, following \cite{MiZe}, we may write out the following inequalities which hold for all $h>0$:
\begin{multline}\label{a.good}
\frac {F(u(\tau+h))-F(u(\tau))}h=f(u(\tau))\frac{u(\tau+h)-u(\tau)}h+\\+\int_0^1r_2f'(u(\tau)+r_1r_2[u(\tau+h)-
u(\tau)]) \dd r_1 \dd r_2\frac{(u(\tau+h)-u(\tau))^2}h\ge\\\ge f(
u(\tau))\frac{u(\tau+h)-u(\tau)}h
\end{multline}
and
\begin{multline}\label{a.verygood}
\frac {F(u(\tau+h))-F(u(\tau))}h=f(u(\tau+h))\frac{u(\tau+h)-
u(\tau)}h-\\\int_0^1(1-r_2)f'( u(\tau+h)-r_1(1-r_2)[ u(\tau+h)-
u(\tau)]) \dd r_1 \dd r_2\frac{(u(\tau+h)-u(\tau))^2}h\\\le f(
u(\tau+h))\frac{u(\tau+h)- u(\tau)}h.
\end{multline}
Multiplying these inequalities by $\varphi$ and integrating over $x$, we end up with
\begin{multline}
\(f(u(\tau)),\varphi\frac{u(\tau+h)-u(\tau)}h\)\le
\(\varphi,\frac {F(u(\tau+h))-F(u(\tau))}h\)\le\\\le \(f(
u(\tau+h)),\varphi\frac{u(\tau+h)- u(\tau)}h\)
\end{multline}
(actually, due to our assumptions on $u$, all terms in that inequality are well-defined for almost all $\tau$).
In addition, since $(\varphi,|\Nx u(\tau)|^2)$ is also a {\it convex} functional, we have the analogous, but simpler inequalities
\begin{multline}
\(\Nx u(\tau),\varphi\frac{\Nx u(\tau+h)-\Nx u(\tau)}h\)\le
\frac12\(\varphi,\frac {|\Nx u(\tau+h)|^2-|\Nx u(\tau)|^2}h\)\le\\\le \(\Nx
u(\tau+h),\varphi\frac{\Nx u(\tau+h)- \Nx u(\tau)}h\).
\end{multline}
Taking a sum of these two inequalities, integrating by parts and using the definition of $H(\tau)$, we arrive at
\begin{multline}\label{4.large}
\(-H(\tau),\varphi\frac{\Nx u(\tau+h)-\Nx u(\tau)}h\)+\(\partial_{x_1} u(\tau),\varphi\frac{u(\tau+h)-u(\tau)}h\)\le\\\
\frac{\mathcal E_\varphi(u(\tau+h))-\mathcal E_\varphi(u(\tau))}h\le\\\
\(-H(\tau+h),\varphi\frac{\Nx u(\tau+h)-\Nx u(\tau)}h\)+\(\partial_{x_1} u(\tau+h),\varphi\frac{u(\tau+h)-u(\tau)}h\)
\end{multline}
with
$$
\mathcal E_\varphi(u):=\frac12(\varphi,|\Nx u|^2)+(F(u),1).
$$
Finally, integrating this formula over $\tau\in[T_1,T_2]$, we have
\begin{multline}\label{4.last}
\int_{T_1}^{T_2}\(-H(\tau),\varphi\frac{\Nx u(\tau+h)-\Nx u(\tau)}h\)+\(\partial_{x_1} u(\tau),\varphi\frac{u(\tau+h)-u(\tau)}h\)\,d\tau\le\\\
\frac1h \int_{T_2}^{T_2+h}\mathcal E_\varphi(u(\tau))\,d\tau-\frac1h\int_{T_1}^{T_1+h}\mathcal E_\varphi(u(\tau))\,d\tau\le\\\
\int_{T_1}^{T_2}\(-H(\tau+h),\varphi\frac{\Nx u(\tau+h)-\Nx u(\tau)}h\)+\(\partial_{x_1} u(\tau+h),\varphi\frac{u(\tau+h)-u(\tau)}h\)\,d\tau.
\end{multline}
It only remains to note that, due to our assumptions on $u$, we may pass to the limit $h\to0$ for almost all fixed $T_1$ and $T_2$. This gives the desired equality \eqref{4.energy}.
\end{proof}
As the next step, we need \eqref{4.energy} to hold for {\it every} $T_1,T_2\in\R_+$. That is proved in the following lemma.

 \begin{lemma}\label{Lem4.energy} Let the assumptions of Lemma \ref{Lem4.ae} hold and let, in addition, the function $u\in C([0,T],L^2_\varphi(\Omega))$ and the function $\tau\to\mathcal E_\varphi(u(\tau))$ be lover semicontinuous, i.e.,
 $$
 \mathcal E_\varphi(u(\tau))\le\liminf_{n\to\infty}\mathcal E_\varphi(u(\tau_n))
 $$
 for any $\tau_n\to\tau$ and any $\tau$. Then, $\tau\to\mathcal E_\varphi(u(\tau))$ is absolutely continuous and \eqref{4.energy} holds for all $T_1>0$ and $T_2>0$.
\end{lemma}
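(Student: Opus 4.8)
The plan is to upgrade the "almost every $T_1,T_2$" statement of Lemma \ref{Lem4.ae} to an "every $T_1,T_2$" statement by combining the one-sided estimates hidden in \eqref{4.large}--\eqref{4.last} with the two extra hypotheses: continuity of $u$ in $L^2_\varphi$ and lower semicontinuity of $\tau\mapsto\mathcal E_\varphi(u(\tau))$. First I would observe that the right-hand side of \eqref{4.energy}, call it $G(T_1,T_2)=\int_{T_1}^{T_2}(\varphi'\partial_{x_1}u,\Dt u)-(H,\varphi\Dt u)\,dt$, is an absolutely continuous function of $(T_1,T_2)$ on all of $\R_+$, since the integrand is in $L^1_{loc}(\R_+)$ by the assumptions \eqref{4.as} (the product $(H,\varphi\Dt u)$ lies in $L^1_b(\R_+)$ because $H,\Dt u\in L^2_b(\R_+,H^{\pm1}_b)$, and similarly for the $\varphi'\partial_{x_1}u$ term). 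So Lemma \ref{Lem4.ae} says precisely that $\mathcal E_\varphi(u(T_2))-\mathcal E_\varphi(u(T_1))=G(T_1,T_2)$ for a.e.\ pair, and it suffices to show the function $e(\tau):=\mathcal E_\varphi(u(\tau))$ agrees a.e.\ with an absolutely continuous function; then lower semicontinuity will force equality everywhere.

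The key step is a one-sided continuity argument. Going back to the chain \eqref{4.last} and passing to the limit $h\to 0^+$ along the full sequence, one sees that for \emph{every} $T_1<T_2$ one gets the inequality
\[
\limsup_{h\to0^+}\Big(\tfrac1h\!\int_{T_2}^{T_2+h}\!\! e(\tau)\,d\tau-\tfrac1h\!\int_{T_1}^{T_1+h}\!\! e(\tau)\,d\tau\Big)\ \le\ G(T_1,T_2)\ \le\ \liminf_{h\to0^+}(\cdots),
\]
so in fact the limit exists and equals $G(T_1,T_2)$ for every $T_1,T_2$ that are Lebesgue points of $e$ from the right — and by absolute continuity of $G$ one deduces that $h^{-1}\!\int_{\tau}^{\tau+h}e$ converges, uniformly on compact sets, to a function $\tilde e(\tau)$ with $\tilde e(T_2)-\tilde e(T_1)=G(T_1,T_2)$ for all $T_1,T_2$; hence $\tilde e$ is absolutely continuous and $e=\tilde e$ a.e. Now I would use continuity of $u$ in $C([0,T],L^2_\varphi)$ together with the a priori bounds \eqref{4.as} to get, via interpolation and weak lower semicontinuity of the convex functionals $v\mapsto(\varphi,|\Nx v|^2)$ and $v\mapsto(\varphi,F(v))$, that $e$ is lower semicontinuous (this is exactly the hypothesis being invoked). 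Lower semicontinuity plus $e=\tilde e$ a.e.\ plus $\tilde e$ continuous gives $e(\tau)\le\tilde e(\tau)$ for every $\tau$. For the reverse inequality I would apply the same reasoning to the \emph{left} averages $h^{-1}\!\int_{\tau-h}^{\tau}e$, or equivalently run \eqref{4.last} with the roles of the endpoints examined from the left, to obtain $e(\tau)\ge\tilde e(\tau)$; combining the two yields $e\equiv\tilde e$, so $e$ is absolutely continuous and \eqref{4.energy} holds for all $T_1,T_2>0$.

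The main obstacle I anticipate is making the passage $h\to0^+$ in \eqref{4.last} rigorous at \emph{individual} times rather than almost every time: the difference quotients $\varphi\,h^{-1}(u(\tau+h)-u(\tau))$ converge to $\varphi\,\Dt u(\tau)$ only in an integrated, weak sense (in $L^2_b(\R_+,H^{-1}_b)$), so one cannot pair them freely with $H(\tau)$ or $\partial_{x_1}u(\tau)$ pointwise in $\tau$; one must keep the time integral and use that $G$ is absolutely continuous to localize. The cleanest route is probably to prove first that $\tilde e(\tau):=e(T_0)+G(T_0,\tau)$ (for a fixed good base point $T_0$) is the absolutely continuous representative, then show $\liminf_{n}e(\tau_n)\ge \tilde e(\tau)$ fails unless $e=\tilde e$, using that $e\ge \tilde e$ cannot happen on a positive-measure set (it would contradict $e=\tilde e$ a.e.) and $e>\tilde e$ at an isolated point is killed by lower semicontinuity applied along a sequence of good points. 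The convexity/lsc structure is doing all the work here; the rest is soft measure theory.
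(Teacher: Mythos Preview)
Your soft argument does yield one direction: from $e=\tilde e$ a.e.\ with $\tilde e$ continuous and $e$ lower semicontinuous you correctly get $e(\tau)\le\tilde e(\tau)$ for every $\tau$. The gap is the reverse inequality. Lower semicontinuity is insensitive to whether you approach from the left or the right, so ``applying the same reasoning to left averages'' still only produces $e\le\tilde e$. Nothing in your toolkit rules out a single bad time $\tau_0$ with $e(\tau_0)<\tilde e(\tau_0)$; the function $e$ that equals $\tilde e$ except for a downward jump at one point is lower semicontinuous and agrees with $\tilde e$ almost everywhere. Your last paragraph seems to address the wrong sign (you argue against $e>\tilde e$, which is already excluded).

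This is exactly where the paper does real work rather than measure theory. The authors reduce the problem to proving the one-sided energy \emph{inequality} $\mathcal E_\varphi(u(T_2))-\mathcal E_\varphi(u(T_1))\le G(T_1,T_2)$ for \emph{every} $T_1$ (lower semicontinuity already gives it for every $T_2$ and almost every $T_1$, and a splitting argument shows that the two-sided inequality forces equality). To get the inequality at a fixed $T_1$ (say $T_1=0$) they observe that $u$ is the \emph{unique} solution of an auxiliary weighted Cahn--Hilliard problem with right-hand side $\tilde g\in L^2_b(\R_+,H^1_b)$; they then approximate the singular $f$ by regular cut-offs $f_n$, solve the approximate problems (for which the energy equality holds classically from $t=0$), and pass to the limit using weak convergence of $\partial_t v_n$ and lower semicontinuity of the energy. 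This PDE approximation step supplies the missing lower bound $e(0)\ge\tilde e(0)$ that your argument cannot reach.
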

\begin{proof} We first note that, due to the lower semicontinuity,
\begin{equation}\label{4.leq}
\Cal E_\varphi(u(T_2))-\Cal E_\varphi(u(T_1))\le \int_{T_1}^{T_2}(\partial_{x_1} u(t),\phi'\Dt u(t))-(H(t),\Dt u(t))\,dt
\end{equation}
for {\it all} $T_2$ and {\it almost all} $T_1$. Assume now that we proved that this {\it inequality} holds {\it for all} $T_1$ as well. Then the assertion of the lemma holds. Indeed, let we have the {\it strict} inequality for some $T_1>0$ and $T_2>0$. Then, we may find $T_1^*<T_1$ and $T_2^*>T_2$ such that the equality holds on the interval $[T_1^*,T_2^*]$ (since it holds for almost all $T_1$ and $T_2$). Splitting the interval $[T_1^*,T_2^*]=[T_1^*,T_1]\cup[T_1,T_2]\cup[T_2,T_2^*]$ and using \eqref{4.leq} for first and third interval together with the strict inequality on the second interval, we see that the inequality must be strict also on the interval $[T_1^*,T_2^*]$. Thus, in order to prove the lemma, we only need to verify that inequality \eqref{4.leq} holds for every $T_1$. Without loss of generality, we may prove that for $T_1=0$ only.
\par
To this end, we note that the function $u$ is a {\it unique} solution of the following Cahn-Hilliard type problem
\begin{equation}\label{4.mod}
\varphi^{-1/2}(-\Dx)^{-1}(\varphi^{1/2}\Dt v)=\varphi^{-1}\Nx(\varphi \Nx v)-f(v)-\tilde g(t),\ \ v\big|_{t=0}=u\big|_{t=0}
\end{equation}
with $\tilde g(t):=H(t)-\varphi^{-1/2}(-\Dx)^{-1}(\varphi^{1/2}\Dt u)+\varphi^{-1}\varphi'\partial_{x_1}u$. Indeed, by the construction $u$ is a weak solutions of that equation. Let $v_1(t)$ and $v_2(t)$. Then, writing out the equation for the difference $w=v_1-v_2$, multiplying this equation by $\varphi w$ and arguing in a standard way (exploiting the monotonicity of $f$), we arrive at
$$
\|w(t)\|_{H^{-1}_\varphi}\le Ce^{Kt}\|w(0)\|_{H^{-1}_\varphi}
$$
and the uniqueness holds.
\par
Note also that $H\in L^2_b(\R_+,H^1_b(\Omega))$ by the assumptions on $u$. So, we only need to verify the energy {\it inequality} for the auxiliary equation \eqref{4.mod}. To this end, we approximate the singular potential $f$ by the {\it regular} ones $f_n$ just by replacing $f(u)$ outside of $(-1+1/n,1-1/n)$, by the proper linear function:
\begin{equation}\label{4.modmod}
\varphi^{-1/2}(-\Dx)^{-1}(\varphi^{1/2}\Dt v_n)=\varphi^{-1}\Nx(\varphi \Nx v_n)-f_n(v_n)-\tilde g(t),\ \ v_n\big|_{t=0}=u\big|_{t=0}.
\end{equation}
Then, since \eqref{4.modmod} is a small compact perturbation of the standard Cahn-Hilliard equation in $\Omega$ with {\it linearly} growing nonlinearity $f_n$, we have the unique solvability as well as the energy equality:
$$
\mathcal E_{\varphi,n}(v_n(T))-\mathcal E_{\varphi,n}(u(0))+\int_0^T\|\varphi^{1/2}\Dt v_n(t)\|^2_{H^{-1}}\,dt=\int_0^T(\varphi \tilde g(t),\Dt v_n(t))\,dt.
$$
where $\mathcal E_{\varphi,n}(z)$ is the energy with the potential $F$ replaced by $F_n$.
Now, passing to the limit $n\to\infty$, $v_n\to u$ and  using the obvious relations
$$
\mathcal E_{\varphi,n}(u(0))\to\mathcal E_\varphi(u(0));\ \ \mathcal E_\varphi(u(T))\le \liminf_{n\to\infty}\mathcal E_{\varphi,n}(v_n(T))
$$
together with the weak convergence $\Dt v_n\to\Dt u$ in $L^2(0,T,H^{-1}_\varphi(\Omega))$ (these results are standard for the
Cahn-Hilliard equations theory, so we do not present the  proofs here, see  \cite{MZ1} for more details), we arrive at
$$
\mathcal E_{\varphi}(u(T))-\mathcal E_{\varphi}(u(0))+\int_0^T\|\varphi^{1/2}\Dt u(t)\|^2_{H^{-1}}\,dt=
\int_0^T(\varphi \tilde g(t),\Dt u(t))\,dt
$$
which is equivalent to the desired energy inequality \eqref{4.leq} and that finishes the proof of the lemma.
\end{proof}
\begin{remark}\label{Rem4.better} Being pedantic, we have proved in Lemma \ref{Lem4.energy} that the energy equality holds for $T_1>0$ only. This drawback can be easily corrected by the proper extending the function $u(t)$ to negative times and arguing as before. Thus, \eqref{4.energy} remains true for $T_1=0$ as well.
\end{remark}
We are now able to return to the initial Cahn-Hilliard problem.
\begin{corollary}\label{Cor4.energy} Let $u$ be a weak solution of the Cahn-Hilliard problem \eqref{eqmain}. Then, for any weight function $\varphi\in L^1$ satisfying \eqref{1.phi}, the function $t\to\mathcal E_\varphi(u(t))$ is absolutely continuous on $[0,\infty)$ and the following energy identity holds:
\begin{equation}\label{4.chenergy}
\frac d{dt}\(\mathcal E_\varphi(u(t))+(u\varphi,g)\)+(\varphi,|\Nx\mu(t)|^2)+(\Dt u(t),\varphi'\partial_{x_1}u(t))+(\partial_{x_1}\mu(t),\varphi'\mu(t))=0
\end{equation}
for almost all $t$.
\end{corollary}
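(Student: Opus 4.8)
The plan is to obtain \eqref{4.chenergy} by specializing Lemma \ref{Lem4.energy} to the weak solution $u$ with the choice $H:=\Dx u-f(u)$, which for problem \eqref{eqmain} equals $g-\mu$, and then rewriting the abstract weighted energy equality \eqref{4.energy} in terms of $\mu$. The substance of the argument is in checking that a weak solution in the sense of Definition \ref{defsol} satisfies all the structural hypotheses of Lemmas \ref{Lem4.ae} and \ref{Lem4.energy}; granting this, \eqref{4.chenergy} follows by a short integration by parts. Note that one cannot argue directly, multiplying \eqref{eqmain} by $\varphi\mu$ and integrating by parts as in the formal computation \eqref{1.est}, precisely because of the obstruction described in Remark \ref{Rem.strange}.

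For the verification, $u\in L^\infty(\R_+,H^1_b(\Om))$ and $F(u)\in L^\infty(\R_+,L^1_b(\Om))$ are immediate from $u\in L^\infty(\R_+,\Phi_b)$, while $u\in L^2_b(\R_+,H^2_b(\Om))$ follows from $\Dx u\in L^2_b(\R_+\times\Om)$ and the weighted elliptic regularity of the Dirichlet Laplacian already used in Lemma \ref{Lem1.ellreg}. Since $f(u),\Dx u,\Nx\mu\in L^2_b(\R_+\times\Om)$ by Definition \ref{defsol} and $g\in L^2_b$, we get $\mu=-\Dx u+f(u)+g\in L^2_b(\R_+,H^1_b(\Om))$; hence $\Dt u=\Dx\mu\in L^2_b(\R_+,H^{-1}_b(\Om))$ and $H=g-\mu\in L^2_b(\R_+,H^1_b(\Om))$ (the $g$-part of $H$ is harmless in any case, since it enters \eqref{4.energy} only through the term $(g,\varphi(u(T_2)-u(T_1)))$, which is well defined for $g\in L^2_b$). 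Finally, $u\in C([0,\infty),L^2_\varphi(\Om))$ follows from $u\in C([0,\infty),L^2_{loc}(\overline\Om))$ and the uniform bound $u\in L^\infty(\R_+,L^2_b(\Om))$ by a standard estimate of the tails using $\varphi\in L^1(\R)$, and the lower semicontinuity of $\tau\mapsto\mathcal E_\varphi(u(\tau))$ is inherited from the weak lower semicontinuity of the convex functionals $v\mapsto\tfrac12(\varphi,|\Nx v|^2)$ and $v\mapsto(\varphi,F(v))$, using that any $u(\tau_n)\to u(\tau)$ in $L^2_{loc}$ bounded in $H^1_b$ converges weakly to $u(\tau)$ in $H^1_\varphi$.

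With the hypotheses in place, Lemma \ref{Lem4.energy} and Remark \ref{Rem4.better} give that $t\mapsto\mathcal E_\varphi(u(t))$ is absolutely continuous on $[0,\infty)$ and that \eqref{4.energy} holds for all $T_1,T_2\ge0$. Differentiating \eqref{4.energy} in $T_2$ and inserting $H=g-\mu$, the contribution of $-(g,\varphi\Dt u)$ is $-\frac d{dt}(u\varphi,g)$ (as $\varphi$ and $g$ do not depend on $t$), while the contribution of $(\mu,\varphi\Dt u)=(\mu,\varphi\Dx\mu)$, integrated by parts over $\Om$ with boundary terms vanishing since $\mu\big|_{\p\Om}=0$ and $\varphi$ decays, equals $-(\varphi,|\Nx\mu|^2)-(\partial_{x_1}\mu,\varphi'\mu)$ (recall $\Nx\varphi=(\varphi',0,0)$). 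Collecting these contributions, moving $\frac d{dt}(u\varphi,g)$ to the left-hand side, and matching signs with the formal identity \eqref{1.est} produces exactly \eqref{4.chenergy}; the absolute continuity of $t\mapsto\mathcal E_\varphi(u(t))+(u\varphi,g)$ is then clear, since the remaining three terms of \eqref{4.chenergy} form an $L^1_{loc}$ function of $t$.

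The main obstacle is this verification step: a merely weak, infinite-energy solution lives only in uniformly local spaces, so establishing the continuity $u\in C([0,\infty),L^2_\varphi)$ and, above all, the lower semicontinuity of the weighted energy $\tau\mapsto\mathcal E_\varphi(u(\tau))$ requires combining the a priori bounds of Section \ref{s1} with convexity and weak-compactness arguments, and one must carry the weight $\varphi$ and the low-regularity force $g\in L^2_b$ carefully through the computation. Once these points are settled, the passage from \eqref{4.energy} to \eqref{4.chenergy} is the routine integration by parts indicated above.
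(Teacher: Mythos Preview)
Your proposal is correct and follows the same approach as the paper's proof, which simply states that one multiplies \eqref{0.ch} by $\varphi\mu$ and invokes Lemma \ref{Lem4.energy}, the hypotheses being ``clearly satisfied''. You are in fact more careful than the paper: you spell out the verification of those hypotheses and flag the minor issue that $g\in L^2_b$ need not lie in $H^1_b$, handling the $g$-part of $H$ separately via $\int_{T_1}^{T_2}(g,\varphi\Dt u)\,dt=(g,\varphi(u(T_2)-u(T_1)))$.
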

Indeed, in order to obtain \eqref{4.chenergy}, it is sufficient to multiply the initial equation \eqref{0.ch} by $\varphi\mu$
and use the energy equality proved in Lemma \ref{Lem4.energy} (all the assumptions of the lemma are clearly satisfied for any weak solution).
\begin{corollary}\label{Cor4.cont} Let $u$ be a weak solution of the Cahn-Hilliard problem \eqref{eqmain} in the case of singular potentential
$f$. Then $u\in C([0,T],\Phi_\varphi)$ for any $T>0$ and any integrable weight $\varphi$ satisfying \eqref{1.phi}. The latter means that $u\in C([0,T], H^1_\varphi(\Omega))$ and $F(u)\in C([0,T],L^1_\varphi(\Omega))$.
\end{corollary}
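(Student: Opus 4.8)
The plan is to promote the weak continuity of $t\mapsto u(t)$ that is already built into Definition \ref{defsol} into strong continuity in the weighted energy space $\Phi_\varphi$, using the fact that, by Corollary \ref{Cor4.energy}, $t\mapsto\mathcal E_\varphi(u(t))$ is absolutely continuous (here $\mathcal E_\varphi(u)=\frac12\|\Nx u\|^2_{L^2_\varphi}+(\varphi,F(u))$). This is the classical mechanism ``weak convergence plus convergence of the norm yields strong convergence''; the only real complication is that the quantity known to be continuous, $\mathcal E_\varphi(u(t))$, is the sum of two pieces --- the gradient part and the potential part $(\varphi,F(u(t)))$ --- neither of which is a priori continuous by itself, and the potential part is precisely what controls the $L^1_\varphi$-convergence of $F(u(t))$ that we must also obtain.

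First I would record the elementary weighted bookkeeping: since $\varphi\in L^1(\R)$ satisfies \eqref{1.phi}, one has $\|v\|_{L^2_\varphi}\le C_\varphi\|v\|_{L^2_b}$, so the bound $u\in L^\infty(\R_+,\Phi_b)$ gives a bound for $\Nx u(t)$ in $L^2_\varphi$ uniform in $t$, and a standard tail estimate together with the $C([0,\infty),L^2_{loc}(\overline\Omega))$-continuity from Definition \ref{defsol} upgrades the latter to $u\in C([0,T],L^2_\varphi(\Omega))$. In particular $t\mapsto(u(t)\varphi,g)$ is continuous, so Corollary \ref{Cor4.energy} indeed gives continuity (even absolute continuity) of $t\mapsto\mathcal E_\varphi(u(t))$. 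Next, for $t_n\to t$ in $[0,T]$ the sequence $\Nx u(t_n)$ is bounded in $L^2_\varphi$ and converges to $\Nx u(t)$ in the sense of distributions (because $u(t_n)\to u(t)$ in $L^2_{loc}$), hence $\Nx u(t_n)\rightharpoonup\Nx u(t)$ weakly in $L^2_\varphi$; passing to a subsequence one may moreover assume $u(t_n)\to u(t)$ almost everywhere in $\Omega$ (which also shows $|u(t,x)|<1$ a.e.).

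The heart of the argument is a clamping step. Along the chosen subsequence set $a_n:=\frac12\|\Nx u(t_n)\|^2_{L^2_\varphi}$, $b_n:=(\varphi,F(u(t_n)))$, with candidate limits $a:=\frac12\|\Nx u(t)\|^2_{L^2_\varphi}$, $b:=(\varphi,F(u(t)))$. Weak lower semicontinuity of the $L^2_\varphi$-norm gives $\liminf a_n\ge a$, while Fatou's lemma applied to the nonnegative functions $\varphi\,(F(u(t_n))+C_0)$, where $C_0>0$ is chosen so that $F+C_0\ge0$ on $(-1,1)$ (possible since $F$ is continuous there and tends to $+\infty$ at $\pm1$), gives $\liminf b_n\ge b$. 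Since $a_n+b_n=\mathcal E_\varphi(u(t_n))\to\mathcal E_\varphi(u(t))=a+b$, these one-sided bounds force $\limsup a_n\le(a+b)-\liminf b_n\le a$, hence $a_n\to a$ and, symmetrically, $b_n\to b$. From $\|\Nx u(t_n)\|_{L^2_\varphi}\to\|\Nx u(t)\|_{L^2_\varphi}$ together with the weak convergence in the Hilbert space $L^2_\varphi$ we get $\Nx u(t_n)\to\Nx u(t)$ strongly in $L^2_\varphi$, i.e. $u(t_n)\to u(t)$ in $H^1_\varphi(\Omega)$; and from $(\varphi,F(u(t_n))+C_0)\to(\varphi,F(u(t))+C_0)$ together with $\varphi(F(u(t_n))+C_0)\ge0$ and the a.e. convergence, Scheff\'e's lemma yields $F(u(t_n))\to F(u(t))$ in $L^1_\varphi(\Omega)$. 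Since every subsequence of an arbitrary $t_n\to t$ admits a further subsequence along which $u(t_n)\to u(t)$ in $\Phi_\varphi$, the whole sequence converges, which is the assertion $u\in C([0,T],\Phi_\varphi)$.

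The single delicate point, and the one I expect to demand the most care, is this simultaneous handling of the two terms of $\mathcal E_\varphi$: a priori neither $\|\Nx u(t)\|^2_{L^2_\varphi}$ nor $(\varphi,F(u(t)))$ is known to be continuous, only their sum is, so one must play the lower-semicontinuity bound for each piece against the known continuity of the sum, and then convert the resulting a.e.\ convergence of $F(u(t_n))$ into $L^1_\varphi$-convergence via Scheff\'e's lemma --- which is what forces the passage to subsequences. Everything else is the routine translation between the weighted spaces $L^p_\varphi$ and the uniformly local spaces already used repeatedly above.
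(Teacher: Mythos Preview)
Your proposal is correct and follows essentially the same approach as the paper's own proof: use the absolute continuity of $t\mapsto\mathcal E_\varphi(u(t))$ from Corollary \ref{Cor4.energy}, split the energy into its gradient and potential parts, use lower semicontinuity of each part against convergence of the sum to force convergence of the parts separately, and then upgrade to strong convergence via norm convergence plus weak convergence (for $\Nx u$) and via Scheff\'e's lemma (for $F(u)$). The paper compresses the clamping step into the phrase ``from the convexity arguments'' and states the Scheff\'e-type fact parenthetically; you have simply written these out explicitly, together with the routine justification of $u\in C([0,T],L^2_\varphi)$ and the subsequence bookkeeping, which the paper omits.
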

\begin{proof} Without loss of generality, we prove the continuity at $t=0$ only. Indeed, let $t_n\to0$. Then, from the energy equality, we see that $\Cal E_\varphi(u(t_n))\to\Cal E_\varphi(u(0))$. Then, from the convexity arguments and using that $u\in C([0,T],L^2_\varphi)$, we see that
$$
\|u(t_n)\|_{L^2_\varphi}\to\|u(0)\|_{L^2_\varphi},\ \ \ \|F(u(t_n))\|_{L^1_\varphi}\to\|F(u(0))\|_{L^1_\varphi}.
$$
The first convergence together with the obvious weak convergence $u(t_n)\to u(0)$ in $H^1_\varphi$ gives the strong convergence in that space. And the second convergence together with the convergence $F(u(t_n))\to F(u(0))$ almost everywhere and the standard fact from the Lebesgue integration theory (let $z_n\ge0$, $z_n\to z$ almost everywhere and $\int z_n\to\int z$. Then $z_n\to z$ in $L^1$) imply the convergence $F(u(t_n))\to F(u(0))$ in $L^1_\varphi(\Omega)$ and the corollary is proved.
\end{proof}
We conclude this section by stating one more standard fact which is however important for what follows.
\begin{proposition}\label{Prop4.stupid} Let the function $u\in H^2_b(\Omega)\cap\{u\big|_{\partial\Omega}=0\}$ be such that
$f(u)\in L^2_b(\Omega)$ and let $F_{1/2}(u):=\int_0^u\sqrt{f'_0(v)}\,dv$ (see \eqref{f0}) and $\varphi$ be the integrable weight satisfying \eqref{1.phi}. Then, $F_{1/2}(u)\in H^1_b(\Omega)$ and
\begin{equation}\label{4.12}
(\Nx(\varphi\Nx u),f_0(u))=-(\varphi,|\Nx F_{1/2}(u)|^2).
\end{equation}
\end{proposition}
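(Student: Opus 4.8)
The idea is that \eqref{4.12} is just the chain rule $\Nx F_{1/2}(u)=\sqrt{f_0'(u)}\,\Nx u$ (so that $|\Nx F_{1/2}(u)|^2=f_0'(u)|\Nx u|^2$) combined with the integration by parts $(\Nx(\varphi\Nx u),f_0(u))=-(\varphi\Nx u,\Nx f_0(u))=-(\varphi,f_0'(u)|\Nx u|^2)$; the only real difficulty is that in the singular case we do not know a priori that $f_0'(u)|\Nx u|^2$ is locally integrable, so this last step must be justified by approximation. Throughout we may assume $f(0)=0$, hence $f_0(0)=0$ (the standing normalisation, and the one that makes $F_{1/2}(u)=\int_0^u\sqrt{f_0'(v)}\,dv$ meaningful). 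First, $F_{1/2}(u(x))$ is finite for a.e.\ $x$: by Cauchy--Schwarz $|F_{1/2}(u(x))|^2\le|u(x)|\,|f_0(u(x))|$, and $f_0(u)=f(u)+Ku\in L^2_b(\Om)$ by hypothesis (with $|u(x)|<1$ a.e.\ in the singular case). Integrating this inequality over a subcylinder gives $\|F_{1/2}(u)\|_{L^2(\Om_{[s,s+1]})}^2\le\|u\|_{L^2_b}\|f_0(u)\|_{L^2_b}$, so $F_{1/2}(u)\in L^2_b(\Om)$; the whole point is therefore the gradient bound.

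For large $n$ put $f_{0,n}(u):=\max(-n,\min(n,f_0(u)))$. This is globally Lipschitz, continuous, monotone increasing, $f_{0,n}(0)=0$, $f_{0,n}(u)\to f_0(u)$ pointwise with $|f_{0,n}(u)|\le|f_0(u)|$, and $f_{0,n}'(u)=f_0'(u)\mathbf 1_{\{|f_0(u)|<n\}}$ a.e., so $0\le f_{0,n}'(u)\nearrow f_0'(u)$ pointwise. Since $f_{0,n}$ is Lipschitz and $u\in H^1_b(\Om)$, the chain rule gives $f_{0,n}(u)\in H^1_b(\Om)$ with $\Nx f_{0,n}(u)=f_{0,n}'(u)\Nx u$, and likewise $F_{1/2,n}(u):=\int_0^u\sqrt{f_{0,n}'(v)}\,dv$ satisfies $\Nx F_{1/2,n}(u)=\sqrt{f_{0,n}'(u)}\,\Nx u\in L^2_b(\Om)$ ($f_{0,n}'$ being bounded). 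Integration by parts then yields, for every weight $\varphi$ as in \eqref{1.phi},
$$(\Nx(\varphi\Nx u),f_{0,n}(u))=-(\varphi\Nx u,\Nx f_{0,n}(u))=-(\varphi,f_{0,n}'(u)|\Nx u|^2)=-(\varphi,|\Nx F_{1/2,n}(u)|^2),$$
the boundary term over $\R\times\partial\omega$ vanishing because $u\big|_{\partial\Om}=0$ and $f_{0,n}(0)=0$, and the flux at $x_1=\pm\infty$ vanishing because $\varphi\in L^1(\R)$ and $f_{0,n}(u)$ bounded force $\varphi f_{0,n}(u)\,\partial_{x_1}u\in L^1(\Om)$.

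Now let $n\to\infty$. Writing $\Nx(\varphi\Nx u)=\varphi\Dx u+\varphi'\partial_{x_1}u$, using $|\varphi'|\le C\varphi$ and Cauchy--Schwarz in $L^2_\varphi$, we get $|(\Nx(\varphi\Nx u),f_{0,n}(u)-f_0(u))|\le C\big\||\Dx u|+|\Nx u|\big\|_{L^2_\varphi}\,\|f_{0,n}(u)-f_0(u)\|_{L^2_\varphi}\to 0$, since $u\in H^2_b(\Om)\subset H^2_\varphi(\Om)$ and $f_{0,n}(u)\to f_0(u)$ in $L^2_\varphi$ by dominated convergence (majorant $|f_0(u)|\in L^2_\varphi$, as $L^2_b\subset L^2_\varphi$ when $\varphi\in L^1$). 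Hence the left-hand side tends to the finite number $(\Nx(\varphi\Nx u),f_0(u))$. On the right-hand side the integrand $\varphi f_{0,n}'(u)|\Nx u|^2\ge0$ increases pointwise to $\varphi f_0'(u)|\Nx u|^2$, so by monotone convergence $(\varphi,f_{0,n}'(u)|\Nx u|^2)\to(\varphi,f_0'(u)|\Nx u|^2)\in[0,+\infty]$; comparing the two limits shows this quantity is finite and equal to $-(\Nx(\varphi\Nx u),f_0(u))$. Finally, since $F_{1/2,n}(u)\to F_{1/2}(u)$ pointwise with $|F_{1/2,n}(u)|\le|F_{1/2}(u)|$, and $\Nx F_{1/2,n}(u)=\sqrt{f_{0,n}'(u)}\Nx u\to\sqrt{f_0'(u)}\Nx u$ in $L^1_{loc}(\Om)$ (dominated by the now-integrable limit), the distributional gradients pass to the limit, so $\Nx F_{1/2}(u)=\sqrt{f_0'(u)}\Nx u$, and the previous identity becomes precisely \eqref{4.12}. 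Specialising $\varphi=\varphi_{\eb,s}$, the bound $|(\Nx(\varphi_{\eb,s}\Nx u),f_0(u))|\le C\big\||\Dx u|+|\Nx u|\big\|_{L^2_b}\|f_0(u)\|_{L^2_b}\sum_{k\in\Z}\varphi_{\eb,s}(k)\le C_\eb\|u\|_{H^2_b}\|f_0(u)\|_{L^2_b}$, with $\sum_k\varphi_{\eb,s}(k)$ bounded uniformly in $s$, gives via \eqref{4.12} that $\int_\Om\varphi_{\eb,s}f_0'(u)|\Nx u|^2\,dx\le C_\eb$ uniformly in $s$; since $\varphi_{\eb,s}$ is bounded below on $\Om_{[s,s+1]}$ by a positive constant independent of $s$, this yields $\sup_s\int_{\Om_{[s,s+1]}}f_0'(u)|\Nx u|^2\,dx<\infty$, i.e.\ $\Nx F_{1/2}(u)\in L^2_b(\Om)$, whence $F_{1/2}(u)\in H^1_b(\Om)$.

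The one genuinely delicate point is exactly the a priori possible non-integrability of $f_0'(u)|\Nx u|^2$ in the singular case, which forbids a direct integration by parts in \eqref{4.12}; the truncation argument is designed to circumvent it, by allowing the right-hand side to be a priori $+\infty$ while the left-hand side stays manifestly finite, so that monotone convergence forces finiteness. The rest — the boundary and flux terms, the weighted Hölder estimates, and the uniformity in the shift parameter $s$ — is routine.
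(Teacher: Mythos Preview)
Your proof is correct and follows essentially the same approximation-and-monotone-convergence strategy as the paper; the only cosmetic difference is that you truncate $f_0$ by value ($f_{0,n}=\max(-n,\min(n,f_0))$, Lipschitz) whereas the paper truncates by domain (replacing $f_0'$ by a constant outside $(-1+1/n,1-1/n)$, keeping $f_n\in C^1$), and both lead to the same monotone limit $(\varphi,f_{0,n}'(u)|\Nx u|^2)\nearrow(\varphi,f_0'(u)|\Nx u|^2)$. You are also more explicit than the paper in deriving the uniform-in-$s$ bound that gives $\Nx F_{1/2}(u)\in L^2_b(\Omega)$, which the paper asserts in the statement but does not spell out in its proof.
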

\begin{proof} Indeed, approximating the singular function $f_0(u)$ by the regular ones $f_n(u)$ (just replacing $f'_0(u)$ by the appropriate constant when $u$ is outside of $(-1+1/n,1-1/n)$, we see that, clearly, $f_n(u)\to f_0(u)$ almost everywhere and in $L^2_\varphi(\Omega)$. On the other hand, since $f_n(u)$ is now regular $C^1$-function,
$$
(\Nx(\varphi\Nx u),f_n(u))=-(\varphi,f'_n(u)|\Nx u|^2).
$$
Passing to the limit here and using that $f'_n$ is monotone increasing, we conclude that
$f'_0(u)|\Nx u|^2\in L^1_\varphi(\Omega)$ and
$$
(\Nx(\varphi\Nx u),f_0(u))=-(\varphi,f'_0(u)|\Nx u|^2).
$$
Thus, we only need to prove that $\Nx F_{1/2}(u)=\sqrt{f'_0(u)}\Nx u$ in the sense of distributions, but that can be easily verified, e.g., by truncating  the function $u$ with an appropriate constant outside of $(-1+1/n,1-1/n)$ and passing to the limit $n\to\infty$). Thus, Proposition \ref{Prop4.stupid} is proved.
\end{proof}
\begin{corollary}\label{Cor4.stupid} Any weak energy solution $u$ of the Cahn-Hilliard problem \eqref{eqmain} satisfies
$$
F_{1/2}(u)\in L^2([0,T],H^1_\varphi(\Omega)),
$$
for every $T$ and every integrable weight $\varphi$.
\end{corollary}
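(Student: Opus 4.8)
The plan is to combine the energy identity \eqref{4.chenergy} proved in Corollary \ref{Cor4.energy} with the pointwise formula \eqref{4.12} from Proposition \ref{Prop4.stupid}. First I would recall that for a weak energy solution $u$, the $\mu$-component satisfies $\Dx u - f_0(u) = g - \mu + Ku$, i.e. writing $H := \Dx u - f(u) = g-\mu$, we have $\Dx u - f_0(u) = g - \mu + Ku$, and all the regularity hypotheses needed to invoke Proposition \ref{Prop4.stupid} at (almost) every fixed time $t$ are guaranteed by Definition \ref{defsol}: namely $u(t)\in H^2_b(\Omega)$ (for $t>0$, by the smoothing in Corollary \ref{Cor1.smooreg}, or directly from Lemma \ref{Lem1.ellreg} applied to \eqref{1.muell}) with $u(t)\big|_{\partial\Omega}=0$ and $f(u(t))\in L^2_b(\Omega)$. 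Hence for almost every $t$, Proposition \ref{Prop4.stupid} gives $F_{1/2}(u(t))\in H^1_b(\Omega)$ together with
\begin{equation*}
(\Nx(\varphi\Nx u(t)),f_0(u(t))) = -(\varphi,|\Nx F_{1/2}(u(t))|^2).
\end{equation*}

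Next I would extract an $L^2$-in-time bound on $\|\Nx F_{1/2}(u(t))\|_{L^2_\varphi}$. The quantity $(\varphi,|\Nx F_{1/2}(u)|^2)$ equals $-(\Nx(\varphi\Nx u),f_0(u)) = -(\varphi\Dx u,f_0(u)) - (\varphi'\partial_{x_1}u, f_0(u))$. Using $\Dx u = f_0(u) + (g-\mu+Ku) - Ku$... more cleanly, substitute $\Dx u = f(u) + g - \mu$ so that $\Dx u = f_0(u) - Ku + g - \mu$, giving
\begin{equation*}
-(\varphi \Dx u, f_0(u)) = -(\varphi f_0(u),f_0(u)) + (\varphi(Ku - g + \mu), f_0(u)).
\end{equation*}
The first term is $\le 0$ and hence harmless for an upper bound; the remaining terms are controlled by Young's inequality and by the inequality $|f_0(u)|\le C|f(u)| + C$ (valid in both the regular and singular cases), together with $|\phi'|\le C\eb\phi$ from \eqref{1.phi}, in terms of $\|f(u(t))\|_{L^2_\varphi}$, $\|g\|_{L^2_\varphi}$, $\|u(t)\|_{L^2_\varphi}$ and $\|\mu(t)\|_{L^2_\varphi}$ — the latter bounded by $\|\Nx\mu(t)\|_{L^2_\varphi}$ via Poincaré. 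All of these are integrable on any time interval $[0,T]$ by the dissipative estimate \eqref{1.dis} (or \eqref{3.dis}), after the customary passage from the weight $\phi_{\eb,s}$ to an arbitrary integrable $\varphi$ satisfying \eqref{1.phi}. This yields $\int_0^T(\varphi,|\Nx F_{1/2}(u(t))|^2)\,dt < \infty$, which is precisely the claimed membership $F_{1/2}(u)\in L^2([0,T],H^1_\varphi(\Omega))$ (the $L^2_\varphi$-bound on $F_{1/2}(u)$ itself following from $|F_{1/2}(u)|\le C(|u|+1)$ and $u\in L^\infty(\R_+,H^1_b)$).

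I would handle the behaviour near $t=0$ separately: for the regular case $u(0)\in\Phi_b$ already lies in the relevant space and the estimate above is uniform down to $t=0$; in the singular case one uses $F_{1/2}(u)^2 \le C(F(u)+1)$ (since $f_0' $ controls $F$ up to the quadratic correction), so $F_{1/2}(u(0))\in L^2_\varphi$ follows from $F(u(0))\in L^1_b$, and the integrability of $\|\Nx F_{1/2}(u(t))\|_{L^2_\varphi}^2$ near $0$ is obtained by the same computation since the right-hand side terms are integrable near $0$ by \eqref{1.dis}. The main obstacle I anticipate is a bookkeeping one rather than a conceptual one: making sure that Proposition \ref{Prop4.stupid} is legitimately applicable at almost every time (i.e. that the weak solution is in $H^2_b$ with zero trace and $f(u)\in L^2_b$ for a.e.\ $t$, including down to $t=0$ in the singular case where we only have $\Phi_b$ data), and that the chain of integrations and the $L^2_\varphi$-$H^1_\varphi$ conversions are uniform in the shift parameter so that the bound genuinely holds in the $\varphi$-weighted (hence, after taking suprema, uniformly local) spaces. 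Once the a.e.-in-time applicability is secured, the rest is the routine estimate sketched above and the conclusion follows.
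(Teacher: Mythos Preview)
Your approach is essentially the paper's: apply Proposition~\ref{Prop4.stupid} at almost every time (the hypotheses $u(t)\in H^2_b$, $f(u(t))\in L^2_b$ being supplied by the dissipative estimate~\eqref{1.dis}), then integrate the identity~\eqref{4.12} over $[0,T]$. The paper's proof is a one-line invocation of exactly this; you have simply written out the details.

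Two small remarks. First, your substitution $\Dx u=f(u)+g-\mu$ is more than needed: since $\Dx u$ and $f_0(u)$ are separately in $L^2_b([T,T+1]\times\Omega)$, a direct Cauchy--Schwarz on the right-hand side of~\eqref{4.12} already gives the time-integrability of $(\varphi,|\Nx F_{1/2}(u)|^2)$. Second, your bound $F_{1/2}(u)^2\le C(F(u)+1)$ in the singular case is not correct in general (for $f_0(v)\sim(1-v)^{-\gamma}$ with $\gamma>1$ one has $|f_0|/F_0\to\infty$ near the singularity). The fix is easy: either use $F_{1/2}(u)^2\le |u|\,|f_0(u)|+C\le |f(u)|+C$ (valid since $|u|\le1$) together with $f(u)\in L^2_b([T,T+1]\times\Omega)$, or---cleaner still---note that $F_{1/2}(0)=0$ forces $F_{1/2}(u)\big|_{\partial\Omega}=0$ and apply the Poincar\'e inequality to the gradient bound you already established.
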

Indeed, we have the control of the $L^2_\varphi$-norm of the solution $u$ from the energy estimate. Thus, the assertion is an immediate corollary of the previous proposition.

\section{Strong attraction via the energy method}\label{s5}
In this concluding section, we develop the weighted energy method and improve essentially the results on the trajectory attractors obtained in Section \ref{s3}. We start with simplifying the construction of the trajectory dynamical system (see Definitions \ref{Def3.tr} and \ref{Def3.bound}).

\begin{proposition}\label{Prop5.tr} Under the assumptions of Theorem \ref{Th3.trattr}, any weak solution $u$ of the Cahn-Hilliard problem (see Definition \ref{defsol}) satisfies estimates \eqref{1.dis} and \eqref{3.dis} with $C_u:=C\|u(0)\|_{\Phi_b}^2$. Thus, condition \eqref{3.dis} in the Definition \ref{Def3.tr} is automatically satisfied (and can be omitted). Moreover, the class of bounded sets introduced in Definition \ref{Def3.bound} possesses the following natural description:
$$
B\subset K_+ \  \text{ is bounded if and only if }\ \ B\big|_{t=0}:=\{u(0),\ u\in B\}\ \ \text{ is bounded in }\Phi_b.
$$
\end{proposition}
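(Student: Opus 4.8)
The plan is to observe that the only reason the dissipative estimate \eqref{1.dis} was stated in Theorem \ref{Th1.main} for the \emph{constructed} solutions rather than for \emph{all} weak solutions is that its derivation there was purely formal; the results of Section \ref{s4} make that derivation rigorous for every weak solution. Concretely, I would start from Corollary \ref{Cor4.energy}: for any weak solution $u$ of \eqref{eqmain} and any integrable weight $\varphi=\phi_{\eb,s}$ satisfying \eqref{1.phi}, the function $t\mapsto\mathcal E_\varphi(u(t))+(u\varphi,g)$ is absolutely continuous on $[0,\infty)$ and the weighted energy identity \eqref{4.chenergy} holds for almost every $t$. This is exactly the identity \eqref{1.est} (equivalently \eqref{1.nondis}) that was obtained formally in the proof of Theorem \ref{Th1.main}, so no new multiplication of the equation by $\varphi\mu$ is required — the whole point of Lemmas \ref{Lem4.ae} and \ref{Lem4.energy} was to legitimize precisely this step.

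Next I would simply rerun the chain of estimates from the proof of Theorem \ref{Th1.main}: the two ``bad'' terms $(\Dt u,\varphi'\partial_{x_1}u)$ and $(\partial_{x_1}\mu,\varphi'\mu)$ are controlled using \eqref{1.phi}, the Poincar\'e inequality, the equation \eqref{eqmain} and Lemma \ref{Lem1.ellreg} applied to the elliptic problem \eqref{1.muell}; fixing $\eb>0$ small enough one arrives at the differential inequality \eqref{1.ddis}, now a genuine (not formal) inequality for a.e. $t$ since \eqref{4.chenergy} is an honest identity and $\mathcal E_\varphi(u(\cdot))$ is absolutely continuous. The Gronwall lemma then yields the weighted estimate \eqref{1.w-dis} with the coefficient of $e^{-\alpha T}$ equal to $\|u(0)\|_{W^{1,2}_\varphi}^2+\|F(u(0))\|_{L^1_\varphi}$, and since this is $\le C\|u(0)\|_{\Phi_b}^2$ uniformly in $s$, taking the supremum over $s\in\R$ (using $\|v\|_{L^p_b}\sim\sup_{s}\|v\|_{L^p_{\phi_{\eb,s}}}$) gives \eqref{1.dis}, hence also \eqref{3.dis}, with $C_u:=C\|u(0)\|_{\Phi_b}^2$. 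In particular condition \eqref{3.dis} in Definition \ref{Def3.tr} is automatically satisfied for every element of $K_+$ and may be dropped from the definition.

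For the description of bounded sets, the ``if'' direction is immediate: if $B\big|_{t=0}$ is bounded in $\Phi_b$, say $\|u(0)\|_{\Phi_b}\le\rho$ for all $u\in B$, then by the previous paragraph \eqref{3.dis} holds for every $u\in B$ with the common constant $C_B:=C\rho^2$, so $B$ is bounded in the sense of Definition \ref{Def3.bound}. For the ``only if'' direction I would evaluate \eqref{3.dis} at times $t\to0^+$: its left-hand side dominates $\|u(t)\|_{W^{1,2}_b}^2+\|F(u(t))\|_{L^1_b}$, and passing to the $\liminf$ while using the continuity/lower-semicontinuity properties collected in Section \ref{s4} (Corollary \ref{Cor4.cont} in the singular case, and the absolute continuity of the weighted energy from Corollary \ref{Cor4.energy} together with weak lower semicontinuity of the $W^{1,2}_b$-norm in general) bounds $\|u(0)\|_{\Phi_b}^2$ by $C_B+C(\|g\|^2_{L^2_b}+1)$ uniformly over $u\in B$; hence $B\big|_{t=0}$ is bounded in $\Phi_b$.

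The main obstacle is concentrated entirely in the justification of the formal energy computation for an arbitrary weak solution, and that has already been settled in Section \ref{s4}; once the weighted energy identity is available, everything else is a routine repetition of the a priori estimates of Section \ref{s1} plus the standard uniformly-local ``take the supremum over the shift'' device. The only other points requiring a little care are the precise meaning of $\|u(0)\|_{\Phi_b}$ and the admissibility of evaluating the estimate at the initial instant, both of which are handled by the continuity statements $u\in C([0,T],H^1_\varphi(\Omega))$ and $F(u)\in C([0,T],L^1_\varphi(\Omega))$ established in Section \ref{s4}.
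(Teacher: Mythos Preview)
Your proposal is correct and follows essentially the same approach as the paper: the paper's proof consists of a single sentence observing that, since the weighted energy equality (Corollary \ref{Cor4.energy}) holds for every weak solution, one may repeat verbatim the derivation of \eqref{1.dis} to obtain \eqref{3.dis} with $C_u=C\|u(0)\|_{\Phi_b}^2$, and that the remaining assertions are immediate. You supply more detail than the paper does, in particular for the ``only if'' direction of the bounded-set characterization (where you invoke the continuity results of Section \ref{s4} to pass to $t=0$), but the underlying argument is the same.
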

Indeed, since the weighted energy equality holds for every weak solution $u$, repeating word by word the derivation of \eqref{1.dis}, we derive estimate \eqref{3.dis} for {\it any} weak solution $u$ with $C_u:=C\|u(0)\|^2_{\Phi_b}$. The other assertions of the proposition follow immediately from this estimate.
\par
We now introduce the natural {\it strong} topology on the space $K_+$.

\begin{definition}\label{Def5.strong} Let $F_{1/2}(u):=\int_0^u \sqrt {f'_0(v)}\,dv$ and let
\begin{multline}
\Theta^+_{strong}:=\{u\in C_{loc}(\R_+,H^1_{loc}(\overline\Omega)\cap L^2_{loc}(\R_+,H^2_{loc}(\overline\Omega))\cap H^1_{loc}(\R_+,H^{-1}_{loc}(\overline\Omega)),\\ \mu\in L^2_{loc}(\R_+,H^1_{loc}(\overline\Omega)),\ \ F(u)\in C_{loc}(\R_+,L^1_{loc}(\overline\Omega)),\\ f(u)\in L^2_{loc}(\R_+\times\overline\Omega),\ \
F_{1/2}(u)\in L^2_{loc}(\R_+,H^1_{loc}(\overline\Omega))\}.
\end{multline}
Then the topology induced by the embedding $K_+\subset \Theta^+_{strong}$ is called a {\it strong} topology on the trajectory phase space $K_+$.
\end{definition}

\begin{remark} The above definition is not self-contradictory, since any weak solution $u$ belongs to $\Theta^+_{strong}$ and $K_+$ is  a subset of that space, see Corollaries \ref{Cor4.cont} and \ref{Cor4.stupid}.
\end{remark}

We are now able to state the main result of this section.

\begin{theorem}\label{Th5.strong} Let the assumptions of Theorem \ref{Th3.trattr} hold and let $f(u)$ be a singular potential. Then, the trajectory dynamical system $(T(h),K_+)$ associated with the Cahn-Hilliard equation possesses the compact global attractor in the strong topology of $\Theta^+_{strong}$. Moreover, this attractor coincides with the weak trajectory attractor constructed in Theorem \ref{Th3.trattr}.
\end{theorem}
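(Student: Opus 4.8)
The plan is to upgrade the weak trajectory attractor $\Cal A_{tr}$ obtained in Theorem \ref{Th3.trattr} to a strong attractor by exploiting the weighted energy equality \eqref{4.chenergy}. The abstract scheme is the so-called energy method (see \cite{Ball,MRW,CVZ1,CVZ2}): since $\Cal A_{tr}$ is already compact, invariant and weakly attracting, it suffices to prove that the weak-star topology of $\Theta_+$ and the strong topology of $\Theta^+_{strong}$ coincide on $\Cal A_{tr}$ (and, more generally, on bounded subsets of $K_+$ along absorbing sequences). Thus I would argue as follows: take a bounded set $B\subset K_+$, a sequence $u_n\in B$ and times $h_n\to\infty$, and show that any weak-star limit point $u$ of $T(h_n)u_n$ in $\Theta_+$ is in fact a strong limit point in $\Theta^+_{strong}$. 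By the description of bounded sets in Proposition \ref{Prop5.tr} and the dissipative estimate \eqref{3.dis}, the shifted trajectories are uniformly bounded in the spaces entering Definition \ref{Def5.strong}, so passing to a subsequence we have weak-star convergence to some $u\in K_+$; the issue is only to promote each weak convergence to a strong one.

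The core is the upgrade of $H^1_{loc}$ and $L^2_{loc}(H^2_{loc})$ (and of $F(u)$ in $L^1_{loc}$ and $F_{1/2}(u)$ in $L^2_{loc}(H^1_{loc})$). Here I would localize using an integrable weight $\varphi=\varphi_{\eb,s}$ and use Corollary \ref{Cor4.energy}: the functional $\Cal E_\varphi(u(t))+(u\varphi,g)$ is absolutely continuous and, from \eqref{4.chenergy} together with the estimates \eqref{1.est1}, \eqref{1.est2} for the weighted remainder terms, it is essentially a Lyapunov-type functional whose derivative is controlled by $-(\varphi,|\Nx\mu|^2)$ plus lower-order $\eb$-small terms. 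The standard energy-method argument then runs: weak-star convergence $u_n(\cdot+h_n)\to u$ gives, by lower semicontinuity of $\Cal E_\varphi$, that $\Cal E_\varphi(u(t))\le\liminf_n\Cal E_\varphi(u_n(t+h_n))$; to get the reverse inequality one integrates the energy equality backwards in time from a fixed $T$, uses the uniform dissipative bound to control the "tail" contribution $\Cal E_\varphi$ at time $t+h_n-T$ by $Ce^{-\alpha(h_n-T)}\to0$ plus the $g$-dependent constant, and passes to the limit in the integral of $(\varphi,|\Nx\mu_n|^2)$ using only weak lower semicontinuity on one side and the backward-integration identity on the other. This forces $\Cal E_\varphi(u_n(t+h_n))\to\Cal E_\varphi(u(t))$ for a.e. $t$, hence $\|\Nx u_n\|_{L^2_\varphi}\to\|\Nx u\|_{L^2_\varphi}$ and $\|F(u_n)\|_{L^1_\varphi}\to\|F(u)\|_{L^1_\varphi}$; combined with the already-known weak convergence and the convexity/Lebesgue argument from Corollary \ref{Cor4.cont}, this yields strong convergence in $C_{loc}(H^1_{loc})$ and $C_{loc}(L^1_{loc}$ for $F(u))$. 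The $L^2_{loc}(H^2_{loc})$ and $F_{1/2}(u)\in L^2_{loc}(H^1_{loc})$ convergences follow similarly by integrating the energy relation over a time interval and matching the $\int(\varphi,|\Nx\mu|^2)$ term with Proposition \ref{Prop4.stupid} and Lemma \ref{Lem1.ellreg}, which convert $\|\Nx\mu\|^2_{L^2_\varphi}$-control into control of $\|u\|^2_{H^2_\varphi}+\|F_{1/2}(u)\|^2_{H^1_\varphi}$; lower semicontinuity plus convergence of the integrated energies again forces norm convergence, hence strong convergence.

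Once this equivalence of topologies on bounded sets along absorbing sequences is established, I would conclude by the standard argument: $\Cal A_{tr}$ is weakly compact and weakly attracting, and the above shows that weak attraction of a bounded set $B$ actually takes place in the strong topology (any sequence realizing the Hausdorff semidistance has a strongly convergent subsequence with limit on $\Cal A_{tr}$), and that $\Cal A_{tr}$ is strongly compact (apply the equivalence to sequences inside $\Cal A_{tr}=T(h)\Cal A_{tr}$). Hence $\Cal A_{tr}$ is also the strong trajectory attractor, and it coincides with the weak one by construction.

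The main obstacle I expect is the bookkeeping of the non-sign-definite weighted remainder terms $(\Dt u,\varphi'\partial_{x_1}u)$ and $(\partial_{x_1}\mu,\varphi'\mu)$ in the energy equality \eqref{4.chenergy} (cf. Remark \ref{Rem1.bad}): they are only $\eb$-small relative to $\|\Nx\mu\|^2_{L^2_\varphi}$, so one cannot treat $\Cal E_\varphi$ as an exact Lyapunov function and must carry the $C\eb$ corrections through the backward-integration limit argument, then send $\eb\to0$ (or fix $\eb$ once and absorb, as in Section \ref{s1}), and finally remove the weight by taking $\sup_s$ as in the derivation of \eqref{1.dis} from \eqref{1.w-dis}. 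A secondary technical point is justifying the interchange of the weak limit with the nonlinear terms $f(u_n)$, $F(u_n)$, $F_{1/2}(u_n)$, which is handled exactly as in Corollaries \ref{Cor4.cont} and \ref{Cor4.stupid} using almost-everywhere convergence plus the uniform integrability supplied by the dissipative estimate.
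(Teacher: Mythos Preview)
Your overall strategy is the paper's strategy: fix an integrable weight $\varphi$, use the weighted energy equality \eqref{4.chenergy} along shifted trajectories $v_n:=T(h_n)u_n$, combine weak lower semicontinuity with the energy balance on the limit trajectory to force convergence of norms, and then upgrade weak to strong convergence. Two points, however, are handled too lightly in your proposal and deserve comment.

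\emph{From pointwise-in-time to $C_{loc}$ convergence.} You write that once $\Cal E_\varphi(v_n(t))\to\Cal E_\varphi(u(t))$ for a.e.\ $t$, the convexity/Lebesgue argument of Corollary \ref{Cor4.cont} yields strong convergence in $C_{loc}(\R_+,H^1_{loc})$ and $F(v_n)\to F(u)$ in $C_{loc}(\R_+,L^1_{loc})$. This step is not automatic: norm convergence at almost every fixed $t$ gives only $L^\infty_{loc}$-weak-$*$ plus a.e.-in-$t$ strong convergence, not uniform-in-$t$ convergence. The paper closes this gap by first observing that the pointwise convergence implies that the trace $\Cal A:=\Cal A_{tr}\big|_{t=0}$ is \emph{compact} in $\Phi_\varphi$ and that $\sup_{s\in[S,S+1]}\dist_{\Phi_\varphi}(v_n(s),\Cal A)\to0$; then, for the $H^1_\varphi$-component, it uses an orthogonal projector $P_N$ onto a finite-dimensional subspace (on which one already has uniform convergence from $C_{loc}(L^2_{loc})$) and the compactness of $\Cal A$ to make $\|Q_N v_n(s)\|_{H^1_\varphi}$ uniformly small; for the $F$-component it uses the uniform integrability coming from compactness of $F(\Cal A)$ in $L^1_\varphi$ together with Egorov's theorem. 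You should plan to insert this argument; the reference to Corollary \ref{Cor4.cont} alone does not supply it.

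\emph{Extraction of the $H^2$, $f(u)$ and $F_{1/2}$ norms.} Your plan to obtain the $L^2_{loc}(H^2_{loc})$ and $L^2_{loc}(H^1_{loc})$ convergences for $u$ and $F_{1/2}(u)$ by ``matching $\int(\varphi,|\Nx\mu|^2)$ with Lemma \ref{Lem1.ellreg}'' is not quite right: Lemma \ref{Lem1.ellreg} is an \emph{inequality}, so convergence of $\|\Nx\mu_n\|_{L^2_\varphi}$ does not by itself force convergence of $\|\Dx v_n\|_{L^2_\varphi}$ or $\|\Nx F_{1/2}(v_n)\|_{L^2_\varphi}$. The paper instead adds and subtracts $2\beta(\varphi,|\mu_n|^2)$ and expands $\mu_n=-\Dx v_n+f(v_n)+g$ exactly (using Proposition \ref{Prop4.stupid} for the cross term), so that the quantities $(\varphi,|\Dx v_n|^2)$, $(\varphi,|f(v_n)|^2)$ and $(\varphi,|\Nx F_{1/2}(v_n)|^2)$ appear explicitly and separately inside the integrated energy identity; this is the decomposition $\Cal H_\varphi=I_1+\cdots+I_6$ in \eqref{5.huge}. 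Each $I_j$ is then either a nonnegative weakly l.s.c.\ quadratic form (this is also how the weighted remainder terms $(\Dt u,\varphi'\partial_{x_1}u)$ and $(\partial_{x_1}\mu,\varphi'\mu)$ are absorbed, with $\eb$ fixed small once and for all---no limit $\eb\to0$ is taken), or converges by Fatou/almost-everywhere arguments. Comparing with the exact limit identity \eqref{5.energylim} forces every l.s.c.\ inequality to be an equality, hence norm convergence of each piece individually. Your sketch should be adjusted to this exact expansion rather than to the elliptic regularity inequality.
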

\begin{proof} Indeed, let $h_n\to\infty$, $u_n\in K_+$ from a bounded set (so $\|u_n(0)\|_{\Phi_b}\le C$) and $v_n:=T(h_n)u_n$. Then, according to Theorem \ref{Th3.trattr}, without loss of generality we may assume that $v_n\to u$ in a a {\it weak} topology of $\Theta_+$. And, to prove the theorem, we only need to verify that $v_n\to u$ in a {\it strong} topology.
\par
Let $\varphi$ be an integrable weight satisfying \eqref{1.phi} and let us rewrite the energy equality \eqref{4.chenergy} for $v_n$ as follows:
\begin{multline}\label{5.energyn}
[\Cal E_\varphi(v_n(T))+(v_n(T)\varphi,g)]-[\Cal E_{\varphi}(u_n(0))+(u_n(0)\varphi,g)]e^{-\alpha (T+h_n)}+\\+\int_{-h_n}^T e^{-\alpha(T-t)} \Cal H_\varphi(v_n(t))\,dt=0
\end{multline}
with
\begin{multline}
\Cal H_\varphi(v_n(t)):=(\varphi, |\Nx \mu_n(t)|^2)+(\Dt v_n(t),\varphi'\partial_{x_1} v_n(t))+\\+(\partial_{x_1}\mu_n(t),\varphi'\mu_n(t))-\alpha[\Cal E_\varphi(v_n(t))+(v_n(t)\varphi,g)]
\end{multline}
and with sufficiently small $\alpha>0$ which will be specified below and any $T>0$. And, of course, for the limit function $u(t)$, we have the following energy equality:
\begin{equation}\label{5.energylim}
[\Cal E_\varphi(u(T))+(u(T)\varphi,g)]+\int_{-\infty}^T e^{-\alpha(T-t)} \Cal H_\varphi(u(t))\,dt=0.
\end{equation}
As usual for the energy method, we need to pass to the weak limit $n\to\infty$ in \eqref{5.energyn} and compare the result with the limit equation \eqref{5.energylim}. Indeed, for the first term, we have as before,
$$
\Cal E_\varphi(u(T))\le\liminf_{n\to\infty}\Cal E_\varphi(v_n(T))
$$
and $(v_n(T)\varphi,g)\to(u(T)\varphi,g)$.
The second term obviously tends to zero (since $u_n(0)$ are bounded and $h_n\to\infty$), so, we only need to establish the analogous inequality for the third term. To this end, we transform it as follows:
\begin{multline}\label{5.huge}
\Cal H_\varphi(v_n)=\frac14(\varphi,|\Nx\mu_n(t)|^2)+[\frac14(\varphi,|\Nx\mu_n(t)|^2)-2\beta(\varphi,|\mu_n(t)|^2)]+\\+
\beta [(\varphi,|\Dx v_n(t)|^2)+(\varphi,|f(v_n)|^2)+4(\varphi,|\Nx F_{1/2}(v_n(t))|^2)]+\\+[4\beta(\varphi'\partial_{x_1}v_n(t),f(v_n(t)))-4\beta K(\varphi,|\Nx v_n(t)|^2)-\alpha(v_n(t)\varphi,g)]+\\+[(\varphi,\beta |f(v_n(t))|^2-\alpha F(v_n(t)))]+\\+
[\beta(\varphi,|\Dx v_n(t)|^2)+\frac12(\varphi,|\Nx \mu_n(t)|^2)-\frac12\alpha(\varphi,|\Nx v_n(t)|^2)+\\+ (\partial_{x_1}\mu_n(t),\varphi'\mu_n(t))+((\Dx)^{-1} \mu_n(t),\varphi'\partial_{x_1} v_n(t))]=\\=I_1(v_n)+I_2(v_n)+I_3(v_n)+I_4(v_n)+I_5(v_n)+I_6(v_n),
\end{multline}
where $\beta>0$ is a sufficiently small positive number. Let us pass to the limit in every term of \eqref{5.huge} separately. Indeed, since, without loss of generality,  $\mu_n\to\mu$ weakly in $L^2([S,S+1],H^1_\varphi)$, for all $S$,
$$
\int_{-\infty}^Te^{-\alpha(T-t)}I_1(u(t))\,dt\le\liminf_{n\to\infty}\int_{h_n}^Te^{-\alpha(T-t)}I_1(v_n(t))\,dt.
$$
The analogous estimate for the second term follows from the fact that (due to the Friedrichs inequality), this term is a positive definite quadratic form. The estimate for the third term follows from the weak convergences $\Dx v_n\to\Dx u$,
$f(v_n)\to f(u)$ and $\Nx F_{1/2}(v_n)\to\Nx F_{1/2}(u)$ in the space $L^2([S,S+1], L^2_\varphi(\Omega))$.
\par
The fourth term is trivial since we have the {\it strong} convergence $\Nx v_n\to\Nx u$ in $L^2([S,S+1],L^2_\varphi(\Omega))$.
The desired estimate for the 5th term follows from the convergence $v_n\to u$ almost everywhere, the estimate $\beta f^2(z)-\alpha F(z)\ge -C$ and the Fatou lemma. Finally, the 6th term is a quadratic form with respect to $v_n$ and $\mu_n$
which will be also positive definite if $\eb>0$ (from \eqref{1.phi}) and $\alpha$ are small enough. Thus, the desired estimate for $I_6$ is also true and we arrive at
$$
\int_{-\infty}^Te^{-\alpha(T-t)}\Cal H_\varphi(v_n(t))\,dt\le \liminf_{n\to\infty}\int_{h_n}^Te^{-\alpha(T-t)}\Cal H_\varphi(u(t))\,dt.
$$
Comparing this estimate with \eqref{5.energylim}, we see that it is possible only if all of the above inequalities are,
in a fact,  {\it equalities}. Thus, we have factually verified that
\begin{equation}\label{5.conv}
\begin{aligned}
\|\mu_n\|_{L^2([S,S+1],H^1_\varphi)}\to\|\mu\|_{L^2([S,S+1],H^1_\varphi)},\\
\|\Dx v_n\|_{L^2([S,S+1],L^2_\varphi)}\to\|\Dx u\|_{L^2([S,S+1],L^2_\varphi)},\\
\|\Nx F_{1/2}(v_n)\|_{L^2([S,S+1],L^2_\varphi)}\to\|\Nx F_{1/2}(u)\|_{L^2([S,S+1],L^2_\varphi)},\\
\|f(v_n)\|_{L^2([S,S+1],L^2_\varphi)}\to\|f(u)\|_{L^2([S,S+1],L^2_\varphi)},\\
\|\Nx v_n(T)\|_{L^2_\varphi}\to\|\Nx u(T)\|_{L^2_\varphi},\\
\|F(v_n(T)\|_{L^1_\varphi}\to\|F(u(T))\|_{L^1_\varphi}
\end{aligned}
\end{equation}
for all fixed $S$ and $T$. That, together with the weak convergences implies the desired {\it strong} convergence for
$\mu_n$, $\Dx v_n$, $\Nx F_{1/2}(v_n)$ and $f(v_n)$. Thus, we only need to verify that
$$
v_n\to u\ \  \text{in }C([S,S+1], H^1_\varphi)\ \ \text{and}\ \ F(v_n)\to F(u) \ \text{in}\ C([S,S+1],L^1_\varphi).
$$
We note that, although the last two convergences of \eqref{5.conv} imply that $v_n(T)\to u(T)$ in $\Phi_\varphi$, that does not give straightforwardly the desired {\it uniform} convergence in $C([S,S+1],\Phi_\varphi)$. However, these convergences imply in a standard way that the {\it trace}  $\Cal A:=\Cal A_{tr}|_{t=0}$ of the trajectory attractor $\Cal A_{tr}$ to the phase space is {\it compact} in $\Phi_\varphi$ and that
\begin{equation}\label{5.comp}
\sup_{s\in[S,S+1]}d_{\Phi_\varphi}(v_n(s),\Cal A)\to 0
\end{equation}
as $n\to\infty$. Let us show that it is sufficient to verify the desired uniform convergences. Indeed, let us fix the orthonormal basis ${e_n}$ in the space $H^1_\varphi$ and the corresponding orhtoprojector $P_N$ of the first $N$ veectors and $Q_N:=1-P_N$. Then, on the one hand,
$$
P_N v_n\to P_N u
$$
in $C([S,S+1],H^1_\varphi)$ for any $N$ (since we have from the very beginning the strong convergence in $C([S,S+1],L^2_\varphi)$. On the other hand, \eqref{5.comp} together with the compactness of the attractor $\Cal A$ in $H^1_\varphi$ imply that, for any $\eb>0$, we may find $N=N(\eb)$ such that
$$
\limsup_{n\to\infty}\|Q_N v_n\|_{C([S,S+1],H^1_\varphi)}\le \eb.
$$
Thus, the uniform convergence in $C([S,S+1],H^1_\varphi)$ is verified.
\par
Let us now prove that $F(v_n)\to F(u)$ in $C([S,S+1],L^1_\varphi)$. To this end, we note that, due to the absolute continuity of the Lebesgue integration, the compactness of $F(\Cal A)$ in $L^1_\varphi$ and convergence \eqref{5.comp} imply that, for every $\eb>0$, there is $\delta=\delta(\eb)>0$ such that
$$
\liminf_{n\to\infty}\sup_{s\in[S,S+1]}\int_A F(v_n(s))\,dx\le \eb
$$
if $\meas_\varphi(A):=\int_A\varphi\,dx<\delta$. The desired uniform convergence $F(v_n)\to F(u)$ in $C([S,S+1],L^1_\varphi)$ is now an immediate corollary of the Egorov theorem. Thus, all desired strong convergences are verified and the theorem is proved.
 \end{proof}

\end{document}